\documentclass[a4]{article}
\usepackage{tgpagella}
\usepackage{mathpazo}
\usepackage{graphicx} 
\usepackage[english]{babel}
\usepackage{amsfonts,amsthm,amsmath,amssymb,mathtools,nicefrac}
\usepackage{fullpage}
\usepackage{authblk}
\usepackage[shortlabels]{enumitem}
\usepackage{todonotes}
\usepackage{hyperref}
\usepackage{bbm,stmaryrd}
\usepackage[most]{tcolorbox}
\usepackage{dsfont}
\usepackage{tikz}
\usetikzlibrary{calc}

\def\rmlabel{\upshape({\kern+0.0833em\itshape\roman*\kern+0.0833em})}
\def\nlabel{\upshape(\kern+0.0833em{\itshape\arabic*}\kern0.0833em)}
\def\alabel{\upshape({\kern+0.0833em\itshape\alph*\kern0.0833em})}
\def\Alabel{\upshape({\kern+0.0833em\itshape\Alph*\kern0.1111em})}

\newcommand{\cA}{\mathcal{A}}
\newcommand{\cB}{\mathcal{B}}

\newcommand{\cP}{\mathcal{P}}

\usepackage{imakeidx}
\newcommand{\By}[2]{\overset{\mbox{\tiny{#1}}}{#2}}
\newcommand{\ByRef}[2]{   \By{\eqref{#1}}{#2} }

\newcommand{\lBy}[1]{     \By{#1}{<} }
\newcommand{\gBy}[1]{     \By{#1}{>} }
\newcommand{\leBy}[1]{    \By{#1}{\le} }
\newcommand{\geBy}[1]{    \By{#1}{\ge} }
\newcommand{\eqByRef}[1]{ \ByRef{#1}{=} }
\newcommand{\lByRef}[1]{  \ByRef{#1}{<} }

\newcommand{\leByRef}[1]{ \ByRef{#1}{\le} }
\newcommand{\geByRef}[1]{ \ByRef{#1}{\ge} }
\newcommand{\JUSTIFY}[1]{\fbox{\tiny{#1}}\quad}

\newtheorem{lem}{Lemma}[section]		
\newtheorem{fact}[lem]{Fact}		
		
\newtheorem{defi}[lem]{Definition}

\newtheorem{thm}[lem]{Theorem}
		
\newtheorem{conj}[lem]{Conjecture}		
\newtheorem{prop}[lem]{Proposition}

\newcounter{claim}
\renewcommand{\theclaim}{\Alph{claim}}

\newenvironment{claim}{
  \refstepcounter{claim}
  \par\medskip
  \noindent\textbf{Claim \theclaim.}\itshape
}{
  \par\medskip
}

\newcounter{proofctr}

\pretocmd{\proof}{%
  \stepcounter{proofctr}%
  \setcounter{claim}{0}%
}{}{}

\newenvironment{claimproof}{
  \begingroup
  \let\origqed\qedsymbol
  \renewcommand{\qedsymbol}{$\blacktriangleleft$}
  \par\noindent\textit{Proof of Claim \theclaim.}
}{
  \qed
  \par\medskip
  \let\qedsymbol\origqed
  \endgroup
}

\newcommand{\G}{\mathbb{G}}
\newcommand{\N}{\mathbb{N}}

\newcommand{\R}{\mathbb{R}}

\newcommand{\Probability}{\mathbf{P}} 
\newcommand{\Expectation}{\mathbf{E}} 
\newcommand{\diff}{\mathsf{d}}
\newcommand{\eps}{\varepsilon}

\def\cA{\mathcal A}
\def\cC{\mathcal C}
\def\Kcyc{K^{\text{cyc}}}

\def\cQ{\mathcal Q}
\def\SMALLDEG{\mathsf{D}}
\def\mindeg{\mathrm{mindeg}}

\DeclareMathOperator{\supp}{supp}

\newcommand{\cutn}[1]{\left\lVert #1\right\rVert_{\square}}
\newcommand{\cutd}{\delta_\square}

\title{Hamiltonicity of inhomogeneous random graphs}
\author{Frederik Garbe\thanks{Email: garbe@cs.cas.cz}}
\author{Jan Hladký\thanks{Email: hladky@cs.cas.cz}}
\author{Simón Piga\thanks{Email: piga@cs.cas.cz}}
\date{}
\affil{Institute of Computer Science of the Czech Academy of Sciences\thanks{With institutional support RVO:67985807. Research supported by Czech Science Foundation Project 26-23695S.}}

\begin{document}
\maketitle
\begin{abstract}
We provide a complete characterization of those graphons $W$ for which the inhomogeneous random graph \(\G(n,W)\) is asymptotically almost surely Hamiltonian. The characterization involves three conditions. Two of them constitute the characterization of $\G(n,W)$ being a.a.s.\ connected, as was shown recently by Hladký and Viswanathan. The third condition captures a geometric obstacle which prevents $\G(n,W)$ from having perfect fractional matchings.
\end{abstract}

\section{Introduction}

The study of Hamilton cycles, i.e. cycles that traverse every vertex of a graph exactly once, is one of the central themes in graph theory. In the context of random graphs, this inquiry dates back to the very foundations of the field. The classical random models introduced by Gilbert~\cite{Gilbert59} and Erd\H{o}s and R\'enyi~\cite{ErdosRenyi59} in 1959, denoted as $\G(n,p)$ (where each edge appears independently with probability $p$), have served as the testbed for understanding the emergence of Hamiltonicity.

For the homogeneous Erd\H{o}s-R\'enyi model $\G(n,p)$, the emergence of global spanning structures is well understood. The classical results of Erd\H{o}s and R\'enyi~\cite{ErdosRenyi59} established that the sharp threshold for connectivity is $p =(1+o(1))\frac{\ln n}{n}$. Specifically, for $p = \frac{\ln n + c_n}{n}$, the probability that $\G(n,p)$ is connected converges to a limit determined solely by the disappearance of isolated vertices. The problem of Hamiltonicity, while significantly harder, exhibits a striking similarity. A famous result of P\'osa~\cite{POSA1976359} determined that the threshold for Hamiltonicity is $p=\Theta(\frac{\ln n}{n})$, using the `rotation technique' that turned out to be very versatile. Independently and at the same time, a much more precise bound on the threshold $p=(1+o(1))\frac{\ln n}{n}$ was obtained by Korshunov~\cite{Kor76}. This was subsequently refined by Komlós and Szemerédi~\cite{KOMLOS20061032}. Later, a celebrated result by Bollobás~\cite{Bollobas84} unified these observations by showing that in the random graph process, in which edges are added one-by-one and at random starting with the edgeless graph, the obstructions to both properties are purely local: a.a.s.\ the graph becomes connected the moment the last isolated vertex disappears, and it becomes Hamiltonian exactly when the minimum degree reaches~2. For more details, we refer to the relevant chapters in classical monographs (Chapter~6.2 in~\cite{MR3675279} and Section~8.2 in~\cite{MR1864966}) on random graphs, as well as a continuously updated survey~\cite{frieze2025hamiltoncyclesrandomgraphs}.

However, $\G(n,p)$ assumes homogeneity: every vertex plays the same probabilistic role. This assumption often fails to capture the complexity of real-world networks or more sophisticated mathematical structures, which display intrinsic inhomogeneities such as community structure, or heavy-tailed degree distributions. To study such phenomena, we turn to the theory of graph limits, specifically \emph{graphons}.

A graphon is a symmetric measurable function $W: \Omega^2 \to [0,1]$, where $(\Omega, \mu)$ is an atomless standard probability space with an implicit $\sigma$-algebra. Graphons emerged in the work of Borgs, Chayes, Lov\'asz, S\'os, Szegedy, and Vesztergombi~\cite{LOVASZ2006933,BORGS20081801} as the natural limit objects for sequences of dense graphs. Just as every convergent sequence of graphs yields a limit graphon, every graphon $W$ defines a natural model of inhomogeneous random graphs, denoted $\G(n,W)$.

The generation of a random graph $\G(n,W)$ proceeds in two distinct steps, having fixed the vertex set as~$V=[n]$:
\begin{enumerate}[label=(GS\arabic*)]
    \item\label{G1} \textbf{Generate vertex types:} We sample $n$ independent points $X_1, \dots, X_n$ from $\Omega$ according to the distribution $\mu$. These points are effectively the "types" or "hidden variables" associated with the vertices $1, \dots, n$.
    \item\label{G2} \textbf{Generate edges:} Conditioned on the types $X_1, \dots, X_n$, we form the graph on the vertex set $\{1, \dots, n\}$ by including the edge $\{i,j\}$ with probability $W(X_i, X_j)$, independently for all pairs $1 \le i < j \le n$.
\end{enumerate}
When $W(x,y) \equiv p$ is constant, this recovers $\G(n,p)$. However, general $W$'s allow for rich structural variations. For instance, the Stochastic Block Model corresponds to $W$ being a step function, i.e. there is a partition $\Omega=\bigcup_{i\in[\ell]}S_i$ such that $W|_{S_i\times S_j}$ is constant for all $i,j\in[\ell]$.

In a recent paper~\cite{ConnectivityPaper} the second author and Viswanathan analyzed the connectivity of $\G(n,W)$. The main result of~\cite{ConnectivityPaper} is that $\G(n,W)$ is a.a.s.\ connected if and only if the following two conditions are satisfied for $W$:
\begin{enumerate}[label=(C\arabic*)]
    \item\label{C1} The graphon $W$ is itself \emph{connected}, meaning that whenever $\Omega$ is decomposed into two sets $\Omega=S\sqcup T$ with $\mu(S), \mu(T) > 0$ then $\int_{S\times T}W>0$.
    \item\label{C2} The tail of points of small degree in $W$ is light. Specifically, we define the degree of a point $x \in \Omega$ as $\deg_W(x) = \int W(x,y) \diff\mu(y)$ and for $\alpha\in [0,1]$ we define $\SMALLDEG_W(\alpha):=\{x\in \Omega:\deg_W(x)\le \alpha\}$. Then we require that $\lim_{\alpha \searrow 0} \frac{\mu(\SMALLDEG_W(\alpha))}{\alpha} = 0$.
\end{enumerate}
Condition~\ref{C1} is necessary for the absence of macroscopic disconnections, that is, partitions $V=V_1\sqcup V_2$ with $|V_1|,|V_2|=\Theta(n)$ with no edges between $V_1$ and $V_2$. Condition~\ref{C2} is necessary for the absence of isolated vertices. 

In this paper, we address the much stronger property of Hamiltonicity in $\G(n,W)$. One might hope that, similar to $\G(n,p)$, the conditions for connectivity~\ref{C1} and~\ref{C2} might suffice for Hamiltonicity. As it turns out, inhomogeneity introduces a third, purely structural obstacle. More precisely, conditions~\ref{C1} and~\ref{C2} alone do not guarantee that $\G(n,W)$ has a perfect fractional matching, which is clearly necessary for Hamiltonicity. Using an LP duality perspective, we can distill the obstacle for $\G(n,W)$ to have a perfect fractional matching into a notion we call a `peninsula'. We motivate and define this notion in the next section. Our main theorem, Theorem~\ref{thm:mainHC}, then says that the absence of peninsulae plus the connectivity conditions~\ref{C1} and~\ref{C2} indeed yield that $\G(n,W)$ is a.a.s. Hamiltonian.

\subsection{Peninsulae}\label{sssec:peninsula}
We start by defining graphon peninsulae and then motivate the definition by looking at the more intuitive setting of finite graphs.
\begin{defi}\label{defi:peninsula}
Suppose that we have a graphon $W:\Omega^2\to[0,1]$.
\begin{enumerate}[label=(\roman*)]
    \item\label{en:defiPen1} $W$ has a \emph{peninsula} if there exists a number $a\in(0,\frac12]$ and disjoint sets $A,B\subset \Omega$ such that $\mu(A)= a$, $\mu(B)=1-2a$, and $W$ is constant-0 on $A\times(A\cup B)$. 
    \item\label{en:defiPen2} $W:\Omega^2\to[0,1]$ has a \emph{narrow peninsula} if there exists a number $a\in(0,\frac12]$ and disjoint sets $A,B\subset \Omega$ such that $\mu(A)>a$, $\mu(B)=1-2a$, and $W$ is constant-0 on $A\times(A\cup B)$.
\end{enumerate}
\end{defi}
The inspiration for the notions of peninsula and narrow peninsula stems from the matching theory for finite graphs. In this context, given an $n$-vertex graph $G$, we analogously say that for $a\in (0,\frac12n]$, a pair of disjoint sets $A,B\subset V(G)$ with $|A|\ge a$ and $|B|=n-2a$ is a \emph{graph peninsula} if $G$ contains no edge with one end-vertex in $A$ and the other in $A\cup B$. 
\begin{figure}[t]
\centering
	\includegraphics[scale=0.7]{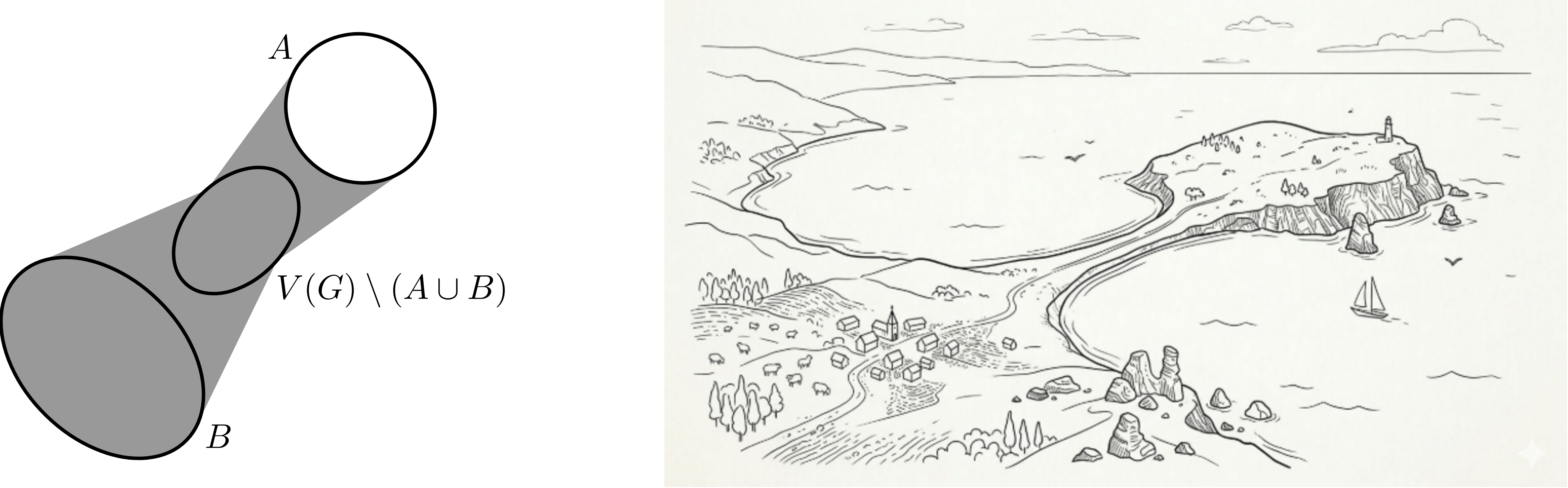}
\caption{\emph{Left:} A narrow graph peninsula; dark regions indicate admissible edges.
\emph{Right:} A beautiful peninsula. Included for pleasure.}
	\label{fig:peninsula}
\end{figure}
Further, a \emph{narrow graph peninsula} amounts to the same condition but $|A|>a$ and $|B|=n-2a$. See Figure~\ref{fig:peninsula} for an example. Let us explain how this notion connects to matching theory. Recall that a function $f:V(G)\to[0,1]$ is a \emph{fractional vertex cover of $G$} if for every edge $uv$ of $G$ we have $f(u)+f(v)\ge 1$. A fractional vertex cover is \emph{half-integral} if $f:V(G)\to\{0,\frac12,1\}$. For each half-integral vertex cover $f$, we can take $A:=f^{-1}(0)$ and $B:=f^{-1}(\frac12)$. We then have the following correspondence:
\begin{enumerate}[label=(P\arabic*)]
    \item\label{corr1} a graph peninsula corresponds to a half-integral vertex cover $f$ with total weight $\| f\|_1\le  \frac{n}{2}$ which is not constant-$\frac12$,
    \item\label{corr2} a narrow graph peninsula corresponds to a half-integral vertex cover $f$ with total weight $\| f\|_1<\frac{n}{2}$.
\end{enumerate}
\begin{figure}
\begin{center}    
	\includegraphics[scale=0.7]{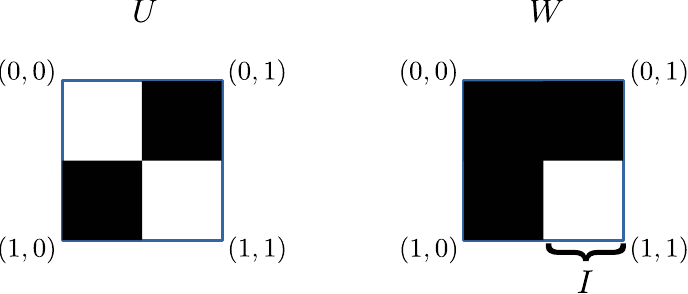}
	\caption{Graphons $U$ and $W$ from Section~\ref{sssec:peninsula}}
	\label{fig:bipgraphons}
\end{center}
\end{figure}
By the LP duality, a narrow graph peninsula means that $G$ does not have a perfect fractional matching (and hence not a Hamilton cycle). Actually, this is the easy direction of the LP duality and can be seen directly: Consider an arbitrary fractional matching. Each edge in a fractional matching with one end in $A$ must have the other end in $V(G)\setminus (A\cup B)$. Hence, the total weight of such a matching on the vertices of $A$ is at most the total weight on $V(G)\setminus (A\cup B)$. As $|A|>|V(G)\setminus (A\cup B)|$, we see that the matching does not cover $A$, and hence is not perfect.

The meaning of graph peninsulae is more subtle. Indeed, illustrative examples of graphs which do have a graph peninsula are either the complete balanced bipartite graph $H=K_{n/2,n/2}$, or a graph $G$ which is obtained from $H$ by adding edges of $K_{n/2}$ into one of the bipartite classes. In either case, we have a graph peninsula by taking $A$ to be one of the large independent sets and $B=\emptyset$. From this we see that a graph peninsula itself is not an obstruction, as both $H$ and $G$ have a fractional perfect matching and even a Hamilton cycle. However, these fractional perfect matchings/Hamilton cycles are extremely constrained, an issue which will appear fully in the graphon setting. Let $U$ and $W$ be graphons corresponding to $H$ and $G$ (see Figure~\ref{fig:bipgraphons}), which both have a peninsula. Due to random fluctuation, $\G(n,U)$ is asymptotically almost surely a complete slightly unbalanced bipartite graph, which does contain a narrow graph peninsula, and thus not a perfect fractional matching. In the random graph $\G(n,W)$, the number of vertices sampled from the constant-0 part (denoted as $I$ in Figure~\ref{fig:bipgraphons}) is with probability $\frac12+o(1)$ strictly bigger than $\frac{n}{2}$. Since these vertices form an independent set in $\G(n,W)$, they can again serve as a narrow peninsula.

Note that the correspondence~\ref{corr2} also works in the converse direction. Indeed, if $G$ contains no narrow graph peninsula, then by the nontrivial direction of LP duality, together with the half-integrality of the vertex cover polytope, it follows that $G$ admits a perfect fractional matching. Thus, the absence of narrow peninsulae is equivalent to the existence of a perfect fractional matching. This dual viewpoint indicates that the notions of graph and graphon peninsula provide a natural structural language for formulating if-and-only-if criteria for perfect fractional matchings and, once appropriate additional conditions are imposed, for Hamiltonicity as well.

\subsection{Main result}
The main theorem is as follows.

\begin{thm}\label{thm:mainHC}
Suppose that $W:\Omega^2\to[0,1]$ is a graphon. 
Then $\G(n,W)$ is a.a.s.\ Hamiltonian if the following three conditions are fulfilled:
\begin{enumerate}[label=(\Roman*)]
    \item\label{en:mainHC1} $W$ is a connected graphon,
    \item\label{en:mainHC2} we have $\lim_{\alpha\searrow 0}\frac{\mu(\SMALLDEG_\alpha(W))}\alpha=0$, and
    \item\label{en:mainHC3} $W$ does not have a peninsula.
\end{enumerate}
Further, these conditions are necessary:
\begin{enumerate}[label=(\Alph*)]
    \item\label{en:mainHCnegative1} If $W$ is not a connected graphon, then $\lim_n\Probability[\G(n,W) \text{ is connected}]=0$.
    \item\label{en:mainHCnegative2} If $\liminf_{\alpha\searrow 0}\frac{\mu(\SMALLDEG(\alpha))}\alpha>0$, then $\liminf_n\Probability[\G(n,W) \text{ contains an isolated vertex}]>0$.
    \item\label{en:mainHCnegative3} If $W$ has a peninsula, then for every $t>0$, $\limsup_n\Probability[\G(n,W) \text{ has a matching on at least $n-t$ vertices}]\le \frac12$. 
\end{enumerate}
\end{thm}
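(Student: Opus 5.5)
Parts (A) and (B) are exactly the negative directions of the connectivity characterization of~\cite{ConnectivityPaper}, and I would simply cite them. For (C), suppose $A,B$ form a peninsula with parameter $a$, and let $C:=\Omega\setminus(A\cup B)$, so $\mu(C)=a=\mu(A)$. In $\G(n,W)$, let $N_A,N_C$ be the numbers of vertices with types in $A$, $C$. The vertices of $A$ are adjacent only to vertices of $C$, so every matching leaves at least $N_A-N_C$ vertices of $A$ uncovered. The vector $(N_A,N_C)$ is multinomial with equal means $an$, so $N_A-N_C$ is approximately centered normal with variance $2an$. Hence $\Probability[N_A-N_C\ge t]\to\frac12$ for every fixed $t$, and on this event there is no matching on $n-t$ vertices. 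This gives $\limsup\le\frac12$.

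For sufficiency I would use the two-round exposure $W=W_1+W_2$, or rather sprinkle: $\G(n,W)\supseteq \G(n,(1-\eps)W)\cup \G(n,\eps W)$ with the same types. The plan has three steps:
\begin{enumerate}[label=(\arabic*)]
\item Show the graph has good expansion: small sets $S$ (with $|S|\le \delta n$) satisfy $|N(S)|\ge 2|S|$, and there are edges between any two disjoint linear-size sets. This also requires $\delta(G)\ge 2$ a.a.s.
\item Use Pósa rotation–extension with boosters from the sprinkled round to close a long path into a Hamilton cycle, once a nearly spanning structure exists.
\item Produce a linear forest, or nearly spanning path, that covers all but $o(n)$ vertices.
\end{enumerate}

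For step (1), condition~\ref{en:mainHC2} controls low-degree vertices. Since $\mu(\SMALLDEG(\alpha))=o(\alpha)$, the expected number of vertices of degree $\le 1$ tends to $0$, by the same computation as for isolated vertices in~\cite{ConnectivityPaper}, refined to degree one. Small-set expansion follows as in the connectivity proof. Connectivity of $W$ handles linear-size cuts via the cut norm.

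Step (3) is where condition~\ref{en:mainHC3} enters, and I expect it to be the main obstacle. I would approximate $W$ by a step graphon (weak regularity) and argue by compactness: the absence of a peninsula in $W$ should upgrade to a quantitative robustness. Concretely, every half-integral fractional vertex cover of the reduced weighted cluster graph that is not constant-$\frac12$ should have weight at least $(\frac12+\gamma)k$, after discarding an $\eps$-fraction of mass. This step is delicate, since the infimum over measurable $A,B$ need not be attained, and I would use a limiting argument with weak* compactness of the indicator functions. Given this robustness, the sampled cluster graph a.a.s. has a perfect fractional matching even after random fluctuations of order $\sqrt n$. I would then lift this fractional matching by regularity to an almost-perfect path cover, connect the paths using the expansion from step (1), and absorb the leftover $o(n)$ vertices with Pósa boosters.

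The point most in need of care is uniformity near degenerate configurations, namely those close to a peninsula. There the slack $\gamma$ is small, yet the fluctuations are only $O(\sqrt n)$, so a fixed $\gamma>0$ with $\eps\ll\gamma$ suffices.
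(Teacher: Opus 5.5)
Your necessity arguments are fine. For (C) you want the event $N_A-N_C>t$ rather than $N_A-N_C\ge t$, but that event also has probability $\frac12+o(1)$. The genuine gap is in the sufficiency direction, in step (2), the P\'osa closing. The parity obstruction lives there, and your plan never uses (III) at that point. In $\G(n,p)$ the booster argument works because a sprinkled random graph hits any prescribed family of $\Omega(n^2)$ pairs. In $\G(n,W)$ the sprinkled graph $\G(n,\eps W)$ only contains pairs $ab$ with $W(X_a,X_b)>0$. So you would have to show that the boosters of a longest path carry $\Omega(n^2)$ of $W$-mass, and nothing in your steps (1)--(2) does this.

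In fact this fails under the conditions you actually use in steps (1)--(2). Take the complete bipartite graphon $U$ from the introduction, which satisfies (I) and (II) but has a peninsula:
\begin{itemize}
\item $\G(n,U)$ is a.a.s.\ connected, has linear minimum degree, expands small sets, and a regularity-based almost spanning path cover exists.
\item Every longest path has both ends in the larger class, and rotations preserve this. So every booster lies in $\{U=0\}$ and is never sprinkled.
\end{itemize}
Your step (2), as formulated, would therefore ``prove'' that $\G(n,U)$ is Hamiltonian.

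Relatedly, the property in step (1) that any two disjoint linear-size sets span an edge is false for many graphons satisfying (I)--(III). An example is an independent set of measure $\frac12-c$, completely joined to a set on which $W>0$. Connectivity of $W$ only yields an edge across every linear-size partition $X\sqcup(V\setminus X)$, as in Proposition~\ref{prop:connectedgraphon}.

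The missing idea is a mechanism that converts (III) into the ability to fix parity at the very end. The paper does this in four steps:
\begin{itemize}
\item A uniquely half-covered graph is non-bipartite (Lemma~\ref{lem:OurCombOpt}\ref{en:OurCombOptNonBip}).
\item After deleting low-degree clusters and any small set, the cluster graph of $\G(n,W)$ remains uniquely half-covered and connected (Proposition~\ref{prop:clustergraph}).
\item Hence every vertex of linear degree lies on $\Omega(n^{L+1})$ odd cycles.
\item These odd cycles are blown up into the absorbing path of Lemma~\ref{lem:abs}, and the leftover $o(n)$ vertices are swallowed by this absorber rather than by sprinkling.
\end{itemize}
Low-degree vertices can neither be covered by the regularity path cover nor absorbed this way, since they lie on too few odd cycles. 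This is why the paper first covers them with the path system of Proposition~\ref{prop:lowdegreePaths}, whose ends have linear degree. Your plan leaves them to the same unsupported P\'osa step.

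Finally, your robustness statement is too strong as written. The cover equal to $\frac12$ everywhere except value $1$ on one cluster has weight $\frac{k+1}{2}$. The weak* compactness argument only controls covers with $\Omega(k)$ zeros. Covers with few zeros need a separate degree count, as in Case~II of the proof of Proposition~\ref{prop:clustergraph}.
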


Below, we show the easy part of Theorem~\ref{thm:mainHC}, that is, that each of the conditions~\ref{en:mainHCnegative1}, \ref{en:mainHCnegative2}, and \ref{en:mainHCnegative3} is necessary. Then, in Section~\ref{sssec:outline}, we give an outline of the proof.

\subsubsection{Necessity of condition~\ref{en:mainHCnegative1}}\label{ssec:Negative1}
Suppose $W$ is not connected, that is, we have $\Omega=S\sqcup T$ with $\mu(S), \mu(T) > 0$ and $\int_{S\times T}W>0$. After the vertex-generating step~\ref{G1}, asymptotically almost surely, we have that the set $V_S\subset V$ of vertices whose type is in $S$ is nonempty, and that the set $V_T\subset V$ of vertices whose type is in $T$ is nonempty, too. We see that in the edge-generating step~\ref{G2}, almost surely, no edge is inserted between $V_S$ and $V_T$. Hence, $\G(n,W)$ is asymptotically almost surely a disconnected graph. 

\subsubsection{Necessity of condition~\ref{en:mainHCnegative2}}\label{ssec:Negative2}
This is shown in Theorem~3(c) in~\cite{ConnectivityPaper}. The idea is that if $\liminf_{\alpha\searrow 0}\frac{\mu(\SMALLDEG(\alpha))}\alpha>0$, then after Stage~\ref{G1}, asymptotically almost surely, there exists a point $X_i$ with $\deg_W(X_i)\ll \frac1n$. The corresponding vertex $i$ will, with high probability, end up as an isolated vertex in $\G(n,W)$.

\subsubsection{Necessity of condition~\ref{en:mainHCnegative3}}\label{ssec:Negative3}
Let $A,B\subset \Omega$ be as in Definition~\ref{defi:peninsula}\ref{en:defiPen1}. Let $C:=\Omega\setminus(A\cup B)$. We have $\mu(C)\le a$. Let $N_A$, $N_B$, and $N_C$ be the random variables counting the number of vertices which in~\ref{G1} get assigned types in $A$, in $B$, and in $C$, respectively. The crucial insight is that in~\ref{G2}, almost surely, no edges will be inserted in $\G(n,W)$ between a pair of vertices whose types were both in~$A$, or for which one type was in~$A$ and the other in~$B$. This is because $W$ is constant-0 on $A\times(A\cup B)$. This means, that a function $f:[n]\to [0,1]$ which assigns to every vertex of type in $A$ value 0, to every vertex of type in $B$ value~$\frac12$, and to every vertex of type in $C$ value~1 is almost surely a fractional vertex cover of $\G(n,W)$. We have that $\|f\|_1= \frac{N_B}2+N_C=\frac12(n+N_C-N_A)$. Thus, if $N_A>N_C+t$, we found a fractional vertex cover of weight less than $\frac12(n-t)$, and so there exists no matching (or even fractional matching) of weight at least $\frac12(n-t)$. It remains to show that the event $\mathcal{E}=\{N_A> N_C+t\}$ occurs with probability at least $\frac12+o(1)$.

The vector $(N_A,N_B,N_C)$ has the multinomial distribution with parameters $n$ and $(a,1-2a,a)$; recall that $a$ is positive but $1-2a$ is not necessarily so. The Central Limit Theorem for the multinomial distribution tells us that $\frac1{\sqrt{n}}(N_A-an,N_B-(1-2a)n,N_C-an)$ converges in distribution to a 3-dimensional centered normal distribution with covariance matrix $\Sigma$, 
\[
\Sigma=
\begin{bmatrix}
a(1-a) & a(1-2a) & a^2 \\
a(1-2a) & 2a(1-2a) & a(1-2a) \\
a^2 & a(1-2a) & a(1-a)
\end{bmatrix}\;.
\]
In particular, the event $\mathcal{E}=\{N_A> N_C+t\}= \{\frac1{\sqrt{n}}(N_A-an)> \frac1{\sqrt{n}}(N_C-an)+\frac{t}{\sqrt{n}}\}$ occurs with probability~$\frac12+o(1)$, as was needed to show.

\subsubsection{Outline of the proof}\label{sssec:outline}
In Section~\ref{sec:prel}, we recall the necessary background regarding combinatorial optimization (notions of fractional matchings and fractional vertex covers), Szemerédi's regularity lemma, graphons, probability theory, and functional analysis. The rest of the paper is devoted to the proof of Theorem~\ref{thm:mainHC}. From the assumptions of Theorem~\ref{thm:mainHC}, we establish that for $G=\G(n,W)$ and its regularized cluster graph $R$ (in the sense of the regularity lemma), the following properties hold with high probability:
\begin{enumerate}[label=(H\arabic*)]
    \item\label{en:outline1} $R$ contains an almost spanning connected component,
    \item\label{en:outline2} $G$ has very few vertices of low degree, and
    \item\label{en:outline3} $R$, after minor cleaning, is `robustly free of a narrow peninsula'.
\end{enumerate}

When constructing a Hamilton cycle in $G$ using the regularity method, low-degree vertices pose an immediate obstacle. Therefore, in Section~\ref{prop:lowdegreePaths}, we construct a small path system $\cP$ that covers all such vertices while ensuring its end-vertices lie in high-degree neighborhoods. This allows us to seamlessly append $\cP$ to the main bulk of our cycle later in the proof.
To construct the bulk of the cycle, we rely on a fractional variant of `Łuczak's connected matchings method'.\footnote{Arguably, the method was first used in~\cite{MR1676887} and the fractional variant in~\cite{MR3269902}, though in both cases there were almost concurrent and independent developments.} The goal is to find a near-perfect fractional matching in $R$, subdivide the regular pairs in its support according to the matching weights, and fill them with almost-spanning paths in $G$. Connectedness~\ref{en:outline1} then allows us to stitch these paths---along with the low-degree path system $\cP$---into a single cycle.

However, a notorious bottleneck in this method arises if the support of the fractional matching is bipartite. In such a setting, even microscopic fluctuations in the cluster sizes or interference from $\cP$ make it impossible to perfectly balance the bipartite classes, preventing the paths from forming a fully spanning cycle. 
It is exactly here that the structural properties from Section~\ref{ssec:matchingresults} become vital. In Section~\ref{StructureOfTheClusterGraph}, we formalize property~\ref{en:outline3} via Proposition~\ref{prop:clustergraph}, showing that $R$ contains an almost spanning subgraph that is `uniquely half-covered' (equivalently, free of graph peninsulae). As derived in Lemma~\ref{lem:OurCombOpt}, this single property rescues the embedding on two fronts:\footnote{In fact, in the actual proof, we utilize each of these properties in relation to a different regularization of $G$, whereas in this simplified overview, we work just with $R$.}
\begin{itemize}
    \item It guarantees the existence of the almost perfect fractional matching required to build the bulk of the cycle using Łuczak's connected matchings method.
    \item It guarantees that this subgraph is \emph{not} bipartite. 
\end{itemize}
This guaranteed non-bipartiteness provides the necessary odd cycles to implement the absorption method, systematically introduced by Rödl, Ruciński, and Szemerédi~\cite{RoRuSzAbsor06} in~2006 and used in many extremal theory results since. In Section~\ref{sec:Absoption}, we construct an absorber suitable for our purposes. Because we are not trapped in a rigid bipartite parity, this absorber can `swallow' the small, unstructured set of leftover vertices that Łuczak's connected matchings method could not cover. In Section~\ref{sec:proof}, we put all these components together and construct the desired Hamilton cycle.

\section{Preliminaries}\label{sec:prel}
\subsection{Graphs}
In this section we recall two easy graph-theoretic results. Further, deeper graph theory preliminaries concern combinatorial optimization and  Szemerédi's regularity lemma. These are given in separate Sections~\ref{ssec:matchingresults}, and~\ref{ssec:RegLemma}, respectively.

We start with the following lemma that provides a walk of odd length\footnote{The length of a walk is its number of edges.} in any connected non-bipartite graph.

\begin{lem}\label{lem:closedwalks}
Let $H$ be a connected non-bipartite graph on $r$ vertices. Then for every pair of vertices $i,j \in V(H)$ (not necessarily distinct), there exists a walk $Q_{i,j}$ of odd length at most $2r-1$ starting at $i$ and and ending at $j$.
\end{lem}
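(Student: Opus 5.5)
Since $H$ is non-bipartite, it contains an odd cycle; I take a shortest one, $C$, of length $\ell$. A shortest odd cycle is induced, since a chord would split it into a shorter odd cycle, and in any case $\ell\le r$. I fix a vertex $c\in V(C)$. The walk I will build goes from $i$ to $c$, then possibly once around $C$, then from $c$ to $j$, and the detour around $C$ is used only to adjust the parity.

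Concretely, let $P_1$ be a shortest path from $i$ to $c$ and $P_2$ a shortest path from $c$ to $j$; both exist because $H$ is connected. The concatenation $P_1P_2$ is a walk from $i$ to $j$ of length $|P_1|+|P_2|$. If this length is odd, I take $Q_{i,j}:=P_1P_2$. If it is even, I insert the closed walk once around $C$ at $c$; this adds $\ell$ edges, which is odd, so the new walk from $i$ to $j$ has odd length. This also covers the case $i=j$: the length $2|P_1|$ is even, so the cycle is always inserted, and the result is a genuine odd closed walk of positive length.

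For the length bound, the only step needing care is the counting. A crude estimate gives $|P_1|,|P_2|\le r-1$ and $\ell\le r$, so the length is at most $3r-2$, which is too weak. To reach $2r-1$ I choose $c$ more carefully. I take $c$ to be the vertex of $C$ closest to $i$, and I replace $P_2$ by a shortest path $P'$ from $j$ to $C$, ending at some $c'\in V(C)$. Then I join $c$ to $c'$ along one of the two arcs of $C$. Their lengths are $d$ and $\ell-d$, which have opposite parity because $\ell$ is odd, so one of them yields an odd total. The resulting walk is $P_1$, followed by an arc of $C$, followed by $P'$ reversed, and its length is at most $|P_1|+\ell+|P'|$.

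It remains to show $|P_1|+|P'|\le 2(r-\ell)+1$. The internal vertices of $P_1$ lie outside $C$, since $c$ is the vertex of $C$ closest to $i$, and similarly for $P'$. Hence $|P_1|\le r-\ell$ and $|P'|\le r-\ell$, because each path has at most $r-\ell$ vertices outside $C$ and a single final edge into $C$. This gives a total length of at most $\ell+2(r-\ell)=2r-\ell\le 2r-3$, comfortably within $2r-1$. In the case $i=j$ with $i\notin C$ I use $P'=P_1$, and the same bound holds. Finally, the walk has positive length whenever $i=j$, since then the arc used is the full cycle $C$ or the path itself is nontrivial.
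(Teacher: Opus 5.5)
Your proof is correct and follows essentially the same route as the paper: reach an odd cycle, traverse the arc of the right parity, and return. The paper works inside a spanning tree plus one edge closing an odd cycle and bounds the detour by counting edge traversals (off-cycle edges at most twice, cycle edges at most once); you instead use shortest paths to $C$ and count vertices off $C$, which gives the explicit bound $2r-\ell\le 2r-3$.
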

\begin{proof}
Let $T$ be a spanning tree of $H$. Since $H$ is non-bipartite, there exists an edge $e \in E(H) \setminus E(T)$ that creates an odd cycle $C$ when added to $T$. 

Consider the subgraph $H' = T \cup \{e\}$. In $H'$, there is a unique path $P_{i,j}$ between $i$ and $j$ utilizing only edges of $T$, and an alternative walk $W_{i,j}$ that connects $i$ and $j$ by using the odd cycle $C$ via the edge $e$. Since $C$ has odd length, the length of $P_{i,j}$ and the length of $W_{i,j}$ necessarily have opposite parities. Thus, one of them must be an odd walk.

The length of the tree path $P_{i,j}$ is trivially bounded by $r-1$. The walk $W_{i,j}$ traverses the edges of $T \setminus E(C)$ at most twice, and the edges of $C$ at most once. Hence its length can also be bounded from above by $2r-1$.
\end{proof}

By a \emph{binary tree} we mean a tree in which there is exactly one vertex of degree~2 (the \emph{root}), then any number of vertices of degree~3 (\emph{internal vertices}) and any number of vertices of degree~1 (\emph{leaves}). The following claim is easy.
\begin{fact}\label{fact:decomposetree}
Suppose that $T$ is a binary tree. Then there exists a system $\mathcal{F}$ of paths $\{P_i\subset T\}_i$ with the property that $\{V(P_i)\}_i$ is a partition of $V(T)$ and that the leaves of $T$ are equal to the union of all the leaves of $\mathcal{F}$.
\end{fact}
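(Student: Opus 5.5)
Fact~\ref{fact:decomposetree} will be proved by strong induction on $|V(T)|$, peeling off a root-to-leaf path at each step. Since the leaves of the pieces are what matter, I fix the convention that the leaves of a path on at least two vertices are its two end-vertices, and that a path on a single vertex has that vertex as its (unique) leaf. The base case is a tree with at most three vertices; such a binary tree is a path whose root is the middle vertex and whose ends are the leaves, so $\mathcal{F}=\{T\}$ works.

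For the induction step, let $T$ have root $\rho$ with neighbours $u_1,u_2$. I walk downward from $\rho$ through $u_1$, and at every internal vertex continue to one of its two children, until a leaf $\ell_1$ is reached; I do the same through $u_2$ to a leaf $\ell_2$. Concatenating gives a path $P$ from $\ell_1$ to $\ell_2$ through $\rho$, whose leaves are exactly $\ell_1,\ell_2$, both leaves of $T$. Every internal vertex $v$ on $P$ has exactly one child $c_v$ not on $P$. Deleting $V(P)$ therefore splits $T$ into the subtrees $T_v$ rooted at the vertices $c_v$, one for each internal vertex $v$ of $P$. The leaves of $T$ are partitioned into $\{\ell_1,\ell_2\}$ together with the leaf sets of the $T_v$.

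The main point to check is that each $T_v$ can be decomposed as required, since $T_v$ is not itself a binary tree in the paper's sense: its root $c_v$ has only two neighbours inside $T_v$ if $c_v$ was internal in $T$, and it is a single vertex if $c_v$ was a leaf of $T$. I handle this in two cases. If $c_v$ was internal in $T$, then inside $T_v$ it has degree exactly $2$ and every other vertex keeps its degree from $T$, so $T_v$ is a binary tree with root $c_v$ and the induction hypothesis applies. If $c_v$ was a leaf of $T$, then $T_v$ is the single vertex $c_v$, and I take the trivial one-vertex path, whose leaf is $c_v$, exactly as required.

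Taking the union of $\{P\}$ with the path systems obtained for all the $T_v$ gives a family of paths whose vertex sets partition $V(T)$. The leaves of these paths are $\ell_1,\ell_2$ together with the leaves of the $T_v$, which are precisely the leaves of $T$. This completes the induction.
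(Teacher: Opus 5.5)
The paper gives no proof of this fact (it is only called easy), so there is nothing to compare against. Your induction is correct. You peel off a leaf-to-leaf path $P$ through the root, then recurse into the subtrees hanging off the degree-$3$ vertices of $T$ on $P$. Your check that each such subtree is either a binary tree rooted at $c_v$ or a single leaf is exactly what is needed. One small wording fix: only the degree-$3$ vertices of $T$ on $P$ index these subtrees. The root $\rho$ is an interior vertex of $P$ but has no child off $P$.

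You should say explicitly that your convention for one-vertex paths is forced, not a matter of taste. Suppose $T$ has $k$ vertices of degree~$3$ and $\ell$ leaves; counting degrees gives $\ell=k+2$. Under the usual convention, a one-vertex path has no leaves, so every path contributes $0$ or $2$ leaves. Hence for odd $k$ no such system exists. The smallest example is the root $\rho$ with neighbours $u_1,u_2$, where $u_1$ has two further leaf neighbours; it has three leaves.

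Correspondingly, your construction does produce singleton paths consisting of leaves of $T$. This matters where the fact is used, in the proof of Proposition~\ref{prop:lowdegreePaths}. There $\mathcal{F}$ is asserted to consist of paths on at least three vertices and to satisfy $|\mathcal{F}|\le t$, so the singletons have to be discarded. This is harmless, because leaves of $T$ are high-degree vertices that need not be covered. Every remaining path of your decomposition has at least three vertices, both ends leaves of $T$, and all interior vertices among $i_1,\dots,i_t$.
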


\subsection{Fractional matchings and fractional vertex covers in graphs}\label{ssec:matchingresults}

As motivated in Section~\ref{sssec:peninsula}, the notion of fractional vertex covers plays a critical role in our proof. While the underlying combinatorial optimization principles we recall below are classical, their application in our setting is highly nontrivial. Specifically, translating the absence of graph peninsulae into the language of fractional vertex covers provides the structural engine for our main argument. It allows us to simultaneously deduce two vastly different macroscopic properties required for our embedding scheme: non-bipartiteness (necessary for constructing absorbing structures) and the existence of fractional perfect matchings (necessary for embedding the bulk of the Hamilton cycle). 

We begin by recalling the standard terminology. Suppose that $G=(V,E)$ is a graph. A function $m:E\to [0,1]$ is a \emph{fractional matching in $G$} if for every $v\in V$ we have $\sum_{w\in N_G(v)} m(vw)\le 1$. The \emph{total weight} of $m$ is defined as $\sum_{e\in E}m(e)$. A fractional matching is \emph{perfect} if its total weight equals $\frac{|V|}{2}$. This is equivalent to having $\sum_{w\in N_G(v)} m(vw)=1$ for every $v\in V$. The \emph{fractional matching number of $G$}, denoted $\mathrm{fmn}(G)$, is defined as the supremum of total weights over all fractional matchings in $G$. 

Dually, a function $f:V\to [0,1]$ is a \emph{fractional vertex cover in $G$} if for every $vw\in E$ we have $f(v)+f(w)\ge 1$. The \emph{total weight} of $f$ is defined as $\sum_{v\in V}f(v)$. The \emph{fractional vertex cover number of $G$}, denoted $\mathrm{fvcn}(G)$, is defined as the infimum of total weights over all fractional vertex covers in $G$.

The structural properties we rely on stem from three fundamental facts about fractional matchings and fractional vertex covers. The first is arguably the most important instance of LP duality. The second and third facts follow from the half-integrality of the fractional matching polytope (see, e.g., \cite[Theorem 30.2]{SchrijverCombOpt}) and the fractional vertex cover polytope (see, e.g., \cite[Theorem 64.11]{SchrijverCombOpt}). Recall that a function (whether a fractional matching or a fractional vertex cover) is \emph{half-integral} if its range is a subset of $\{0,\frac12,1\}$.

\begin{fact}\label{fact:CombOpt}
Suppose that $G$ is a graph.
\begin{enumerate}[label=(\roman*)]
    \item\label{en:CombOptLP} We have $\mathrm{fmn}(G)=\mathrm{fvcn}(G)$.
    \item\label{en:CombOptHalfMatching} There exists a half-integral matching of total weight $\mathrm{fmn}(G)$.
    \item\label{en:CombOptHalfVertexCover} There exists a half-integral vertex cover of total weight $\mathrm{fvcn}(G)$.
\end{enumerate}
\end{fact}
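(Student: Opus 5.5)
The statement to prove is Fact~\ref{fact:CombOpt}. I would prove the three parts separately: part~\ref{en:CombOptLP} by LP duality, and parts~\ref{en:CombOptHalfMatching} and~\ref{en:CombOptHalfVertexCover} by extreme-point arguments on the two polytopes. Throughout, $G=(V,E)$ is finite, so both LPs are feasible and bounded. The zero function is a fractional matching, the constant-$1$ function is a fractional vertex cover, and both optima lie in $[0,|V|]$. Hence the supremum and infimum are attained.

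\textbf{Part~\ref{en:CombOptLP}.} The fractional matching LP is: maximize $\sum_{e}m(e)$ subject to $\sum_{e\ni v}m(e)\le 1$ for all $v$ and $m\ge 0$. The constraint $m\le 1$ is implied by the vertex constraints. Its LP dual is: minimize $\sum_v f(v)$ subject to $f(u)+f(w)\ge 1$ for all $uw\in E$ and $f\ge 0$. In the dual we may cap $f$ at $1$ without losing feasibility or increasing the objective, so the dual is exactly the fractional vertex cover problem. Strong LP duality then gives $\mathrm{fmn}(G)=\mathrm{fvcn}(G)$. Weak duality can also be checked by hand: $\sum_e m(e)\le\sum_{uw}m(uw)(f(u)+f(w))=\sum_v f(v)\sum_{e\ni v}m(e)\le\sum_v f(v)$.

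\textbf{Part~\ref{en:CombOptHalfVertexCover}.} Take an optimal vertex of the polytope $\{f\in[0,1]^V: f(u)+f(w)\ge1 \text{ for } uw\in E\}$; a linear objective is optimized at some vertex. Suppose $f$ is not half-integral. Let $V_+=\{v: 0<f(v)<\tfrac12\}$ and $V_-=\{v:\tfrac12<f(v)<1\}$, so $V_+\cup V_-\neq\emptyset$. For small $\varepsilon>0$, define $f_{\pm}=f\pm\varepsilon(\mathbb 1_{V_+}-\mathbb 1_{V_-})$.

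Both $f_{\pm}$ are feasible. The box constraints hold by smallness of $\varepsilon$. For an edge $uw$, consider three cases.
\begin{itemize}
\item If $f(u)+f(w)>1$, the constraint is slack and survives a small perturbation.
\item If $f(u)+f(w)=1$ with $f(u)<\tfrac12$, then $f(w)>\tfrac12$. Either $u\in V_+, w\in V_-$, or $f(u)=0, f(w)=1$. In both situations the perturbation preserves the sum: in the first the changes cancel, in the second neither value moves.
\item If $f(u)=f(w)=\tfrac12$, neither value moves.
\end{itemize}
Since $f=\tfrac12(f_++f_-)$ with $f_+\neq f_-$, $f$ is not a vertex, a contradiction. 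So every vertex is half-integral, and in particular an optimal one is.

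\textbf{Part~\ref{en:CombOptHalfMatching}.} Take an optimal vertex $m$ of the fractional matching polytope and consider the support $S=\{e:0<m(e)<1\}$. I would rerun the perturbation idea, and this is the main obstacle, since the combinatorics is less immediate than for covers.

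If $S$ contains an even cycle, alternate $\pm\varepsilon$ around it. If $S$ contains a path whose endpoints are not tight (vertex sum $<1$), alternate $\pm\varepsilon$ along it. Next, take two odd cycles joined by a path, or sharing a vertex, giving a structure that is not a single odd cycle. There, alternate $\pm\varepsilon$ around each cycle and $\pm2\varepsilon$ along the connecting path, with signs chosen so every vertex sum is preserved. Each of these constructions writes $m$ as a midpoint of two feasible points.

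Hence each component of $S$ is either an edge $e$ with $m(e)=1$, which lies outside $S$, or a single odd cycle whose vertices are all tight. Solving the tight constraints around an odd cycle forces the value $\tfrac12$ on every edge. Therefore $m$ takes values in $\{0,\tfrac12,1\}$, giving a half-integral matching of total weight $\mathrm{fmn}(G)$. Alternatively, I would cite \cite[Theorem 30.2]{SchrijverCombOpt} and \cite[Theorem 64.11]{SchrijverCombOpt} directly, as the paper indicates.
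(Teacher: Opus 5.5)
Your part (i) is exactly the LP duality the paper invokes. Your perturbation argument for (iii), which moves the values in $(0,\tfrac12)$ and in $(\tfrac12,1)$ in opposite directions, is a correct self-contained proof of the half-integrality that the paper simply cites from \cite[Theorem 64.11]{SchrijverCombOpt}.

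The gap is in your self-contained proof of (ii). The three moves you list are: alternating around an even cycle; alternating along a path with both ends non-tight; and the double move on two odd cycles joined by a path or sharing a vertex. These do not exhaust the non-half-integral configurations, so ``Hence each component of $S$ is \dots\ a single odd cycle whose vertices are all tight'' does not follow. For instance, on a triangle $abc$ let $m(ab)=0.6$ and $m(ac)=m(bc)=0.4$. All three edges lie in $S$, the vertices $a,b$ are tight, and $c$ is not. There is no even cycle, no path with two non-tight ends, and only one odd cycle, so none of your moves applies. Yet $m$ is neither half-integral nor an all-tight odd cycle.

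The missing move is the \emph{lollipop}: an odd cycle of $S$ together with a (possibly trivial) path in $S$ from it to a non-tight vertex $v$. Put $\pm\varepsilon$ alternately around the cycle, so the attachment vertex receives $\pm2\varepsilon$. Then put $\mp2\varepsilon,\pm2\varepsilon,\dots$ along the path. Only the load at $v$ changes, by $\pm2\varepsilon$. In the triangle this is $m(ab)\mp\varepsilon$ and $m(ac),m(bc)\pm\varepsilon$.

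You also need an observation you did not state. If a tight vertex is incident to an edge $e\in S$, the remaining load $1-m(e)\in(0,1)$ must be carried by other edges of weight in $(0,1)$, that is, by edges of $S$. Hence leaves of $S$ are non-tight. This is what lets your path move handle acyclic components, and what excludes pendant trees at the end.

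With both additions the case analysis closes. A component with no even cycle and not containing two odd cycles is either a tree or unicyclic with an odd cycle. If it has a non-tight vertex, the path move or the lollipop move applies. Otherwise it is a bare all-tight odd cycle carrying no other positive edges, which forces every value to be $\tfrac12$.

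Alternatively, your fallback of citing \cite[Theorem 30.2]{SchrijverCombOpt} is precisely what the paper does. With that citation in place of your perturbation argument for (ii), the proposal is complete and coincides with the paper's treatment.
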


We say that a graph $G$ is \emph{uniquely half-covered} if the only half-integral vertex cover of total weight at most $\frac{v(G)}{2}$ is the constant-$\frac12$ function. The critical nature of this property becomes obvious when we recall~\ref{corr1}: a graph is a graph peninsula if and only if it is \emph{not} uniquely half-covered. 

The following lemma distills the power of this property into the exact two conditions we will need for our main proof. While its proof is a straightforward consequence of the classical theory, this lemma acts as the linchpin of our embedding strategy, guaranteeing the structural integrity of a regularization of $\G(n,W)$.

\begin{lem}\label{lem:OurCombOpt}
Suppose that $G$ is a uniquely half-covered graph. Then:
\begin{enumerate}[label=(\roman*)]
    \item\label{en:OurCombOptNonBip} The graph $G$ is not bipartite.
    \item\label{en:OurCombOptPerfect} There exists a half-integral perfect matching in $G$.
\end{enumerate}
\end{lem}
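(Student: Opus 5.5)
For part~\ref{en:OurCombOptNonBip} I will argue by contraposition. Suppose $G$ is bipartite with parts $X\sqcup Y=V(G)$, and assume without loss of generality that $|X|\le |Y|$. Define $f$ to be $1$ on $X$ and $0$ on $Y$. Every edge has exactly one end in $X$, so $f$ is a half-integral vertex cover. Its total weight is $|X|\le \frac{v(G)}2$, and it is not constant-$\frac12$. This contradicts unique half-coveredness.

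The degenerate cases need a moment's care. If $G$ has no vertices at all, the constant-$\frac12$ function and the function $f$ above coincide vacuously, so I will assume $v(G)\ge 1$; this is presumably implicit in the paper. If $X=\emptyset$, the function $f\equiv 0$ is still a cover, since $G$ then has no edges, and it is still not constant-$\frac12$. So every bipartite graph with at least one vertex fails to be uniquely half-covered, and part~\ref{en:OurCombOptNonBip} follows.

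For part~\ref{en:OurCombOptPerfect} I will combine the parts of Fact~\ref{fact:CombOpt}. By Fact~\ref{fact:CombOpt}\ref{en:CombOptHalfVertexCover} there is a half-integral vertex cover $f$ of weight $\mathrm{fvcn}(G)$. The constant-$\frac12$ function is always a vertex cover, so $\mathrm{fvcn}(G)\le \frac{v(G)}2$. Hence $f$ is a half-integral cover of weight at most $\frac{v(G)}2$, and uniqueness forces $f\equiv\frac12$, which gives $\mathrm{fvcn}(G)=\frac{v(G)}2$. By Fact~\ref{fact:CombOpt}\ref{en:CombOptLP}, $\mathrm{fmn}(G)=\frac{v(G)}2$. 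By Fact~\ref{fact:CombOpt}\ref{en:CombOptHalfMatching}, there is a half-integral fractional matching attaining this weight. A fractional matching of total weight $\frac{v(G)}2$ is perfect by definition, so this matching is the required half-integral perfect matching.

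I do not expect a genuine obstacle here. The only point needing care is the step "the minimum is attained by a half-integral cover, and that cover has weight at most $\frac{v(G)}2$". This is exactly what lets unique half-coveredness pin down $\mathrm{fvcn}$. Without the half-integrality statement, uniqueness among half-integral covers would say nothing about general fractional covers.
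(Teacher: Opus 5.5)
Your proof is correct and follows the paper's argument essentially verbatim. For (i) you use the indicator of the smaller bipartition class as a non-constant half-integral cover of weight at most $v(G)/2$; for (ii) you use half-integrality of an optimal cover to force $\mathrm{fvcn}(G)=v(G)/2$, then LP duality and half-integrality of optimal fractional matchings. Your handling of the degenerate cases ($v(G)=0$, an empty class) is a harmless refinement that the paper leaves implicit.
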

\begin{proof}
 We prove part~\ref{en:OurCombOptNonBip} by contrapositive. Suppose that $G$ is bipartite, and let $V(G)=A\sqcup B$ be a bipartition with $|A|\le |B|$. Then the function $f:V(G)\to\{0,1\}$ which is the indicator function of the set $A$ is a half-integral vertex cover of total weight $|A|\le \frac{v(G)}{2}$. We conclude that $G$ is not uniquely half-covered.

 For part~\ref{en:OurCombOptPerfect}, assume that $G$ is uniquely half-covered. Using Fact~\ref{fact:CombOpt}\ref{en:CombOptHalfVertexCover}, $G$ does not have any fractional vertex cover of weight less than $\frac{v(G)}2$. We get that $\mathrm{fvcn}(G)=\frac{v(G)}{2}$. The assertion then follows immediately from Fact~\ref{fact:CombOpt}\ref{en:CombOptLP} and Fact~\ref{fact:CombOpt}\ref{en:CombOptHalfMatching}.
\end{proof}
\subsection{Graphons}\label{ssec:graphonpreliminaries}
We need some basics of the theory of graphons. We follow the excellent monograph~\cite{MR3012035}. As mentioned earlier we work with an arbitrary atomless standard measure space $\Omega$ with probability measure $\mu$ and an implicit $\sigma$-algebra. In the theory of graphons, one often works with $\Omega=[0,1]$ equipped with the Lebesgue measure. This more concrete setting is equivalent to ours by the Lebesgue isomorphism theorem. A \emph{signed graphon} is any symmetric function $W\in L^\infty(\Omega^2)$. Further, we say that $W$ is a \emph{graphon} if $W(x,y)\in[0,1]$ for $\mu^2$-almost every $(x,y)\in\Omega^2$. If $W$ is a signed graphon, then its \emph{cut norm} is defined as $\cutn{W}:=\sup_{S,T\subset \Omega}\left|\int_{S\times T}W\right|$. In particular, we can introduce the \emph{cut norm distance} between two graphons $W_1$ and $W_2$ as $\cutn{W_1-W_2}$. 

A \emph{weighted graph} is a graph $G=(V,E)$ together with a weight function $w:E\to[0,1]$. 
Given a weighted graph $(G,w)$, we construct a \emph{graphon representation} $U$ as follows. We partition $\Omega=\bigsqcup_{v\in V}\Omega_v$ into sets of measure $\frac{1}{v(G)}$-each. For a pair $(u,v)\in V^2$, we define $U_{\restriction \Omega_u\times \Omega_v}$ to be constant-0 if $uv$ does not form an edge of $G$. If it does, we define $U_{\restriction \Omega_u\times \Omega_v}$ to be constant-$w(uv)$. Note that $U$ is not uniquely defined, as it depends on the partition $\Omega=\bigsqcup_{v\in V}\Omega_v$. We say that a \emph{graphon $W$ is at the cut distance at most $\delta$ from a weighted graph $(G,w)$} if there exists a graphon representation $U$ as above with $\cutn{W-U}\le \delta$.

The following lemma can be deduced from well-known results in the theory of graphons. We include a self-contained proof.
\begin{lem}\label{lem:delete}
Suppose that $\delta\in[0,1)$, $R$ is a weighted graph of order $n$, and $W_R$ is its graphon representation. Suppose that a weighted graph $R^*$ is obtained from $R$ by deleting at most $\delta n$ vertices. Then $R^*$ has a graphon representation $W_{R^*}$ such that $\cutn{W_R-W_{R^*}}\le 2\delta$.
\end{lem}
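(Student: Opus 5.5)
The plan is to build $W_{R^*}$ directly from the partition $\Omega=\bigsqcup_{v\in V(R)}\Omega_v$ that underlies $W_R$, so that the two graphons agree except on a small set. Let $D\subset V(R)$ be the set of deleted vertices, with $|D|=k\le\delta n$, and write $m:=n-k=v(R^*)$; we may assume $k<n$. Each part of $W_{R^*}$ must have measure $\frac1m$, while the parts $\Omega_v$ of $W_R$ have measure $\frac1n$. We therefore enlarge each surviving part: partition $\Omega_D:=\bigcup_{v\in D}\Omega_v$, which has measure $\frac kn$, into sets $\Omega'_v$ for $v\in V(R^*)$, each of measure $\frac kn\cdot\frac1m$. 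This is possible because $\Omega$ is atomless and standard. We then set $\Omega^*_v:=\Omega_v\cup\Omega'_v$, which has measure $\frac1n+\frac{k}{nm}=\frac1m$, and let $W_{R^*}$ be the graphon representation of $R^*$ with respect to the partition $\{\Omega^*_v\}_{v\in V(R^*)}$.

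By construction, on $\Omega_u\times\Omega_v$ with $u,v\notin D$ both graphons equal $w(uv)$ (or $0$ if $uv$ is not an edge), because $R^*$ is an induced weighted subgraph of $R$. Hence $W_R-W_{R^*}$ vanishes outside $E:=(\Omega_D\times\Omega)\cup(\Omega\times\Omega_D)$, and on $E$ it takes values in $[-1,1]$. For any $S,T\subset\Omega$ we then have
\[
\Bigl|\int_{S\times T}(W_R-W_{R^*})\Bigr|\le\mu^2(E)\le 2\mu(\Omega_D)=\frac{2k}{n}\le 2\delta,
\]
and taking the supremum over $S,T$ gives $\cutn{W_R-W_{R^*}}\le2\delta$.

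No step here is a real obstacle. The only point needing care is that the new partition has parts of exactly equal measure $\frac1m$, which the redistribution of $\Omega_D$ guarantees. The bound $\mu^2(E)\le 2\mu(\Omega_D)$ is the crude union bound, and it already suffices. If $W_R$ is given abstractly as \emph{some} graphon representation, we simply use whatever partition it comes with, so no measure-preserving rearrangement is needed.
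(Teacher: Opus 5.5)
Your proof is correct and takes essentially the same approach as the paper. You enlarge each surviving part $\Omega_v$ to some $\Omega_v^*\supseteq\Omega_v$, observe that the two representations agree on $\Omega_u\times\Omega_v$ for $u,v\in V(R^*)$, and bound the cut norm by the measure of the complement. The only differences are cosmetic: you construct the enlarged partition explicitly, and you bound the complement by the union bound $2k/n$ instead of computing it exactly as $1-(m/n)^2$.
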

\begin{proof}
Suppose that $\Omega=\bigsqcup_{v\in V(R)}\Omega_v$ is a partition of $\Omega$ which was used to define $W_R$. Fix a partition $\Omega=\bigsqcup_{v\in V(R^*)}\Omega^*_v$ such that for every $v\in V(R^*)$ we have $\Omega_v\subset \Omega_v^*$. Take $W_{R^*}$ to be the representation of $R^*$ with respect to this partition. Crucially, observe that for $u,v\in V(R^*)$ and every $(x,y)\in \Omega_u\times \Omega_v$, we have $W_R(x,y)=W_{R^*}(x,y)$. Thus,
\[
\cutn{W_R-W_{R^*}}\le \left\| W_R-W_{R^*} \right\|_1 \le
\mu^2\left(\Omega^2\setminus\left(\cup_{v\in V(R^*)}\Omega_v\right)^2\right)
=1-\left(\frac{v(R^*)}{v(R)}\right)^2\le 2\delta\;,
\]
as was needed.
\end{proof}

Furthermore, we need two observations about sampling a graph from a graphon. Firstly, the graph degrees are proportional to the graphon degrees. This well-known fact follows from a direct application of the Chernoff bound.

\begin{fact}\label{fact:sampledeg}
    Let $W$ be a graphon and $\varepsilon>0$. There exists $n_0\in\mathbb N$ such that for every $n\geq n_0$ and for $G\sim \G(n,W)$ the following holds. The probability that for each $i\in[n]$ we have $\deg_G(i)=(\deg_W(X_i)\pm \varepsilon) n$ is at least $1-\varepsilon$. (Here $X_i$ is the type of the vertex $i\in [n]$ in~\ref{G1}.)
\end{fact}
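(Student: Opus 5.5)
We will prove Fact~\ref{fact:sampledeg} by conditioning on the vertex types and applying a Chernoff bound together with a union bound over the $n$ vertices. The degree of vertex $i$ is not a sum of independent Bernoulli variables with fixed means, because its edge probabilities $W(X_i,X_j)$ are themselves random. The way around this is to condition only on $X_i$. Given $X_i=x$, the indicators $\mathbb{1}[ij\in E(G)]$ for $j\neq i$ are i.i.d.\ Bernoulli variables. This is because each pair $(X_j, \text{coin}_{ij})$ is independent of the others, and each indicator has success probability $\int W(x,y)\,\diff\mu(y)=\deg_W(x)$. Hence, conditionally on $X_i=x$, we have $\deg_G(i)\sim \mathrm{Bin}(n-1,\deg_W(x))$.

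With this in hand, the plan is as follows. Fix $\varepsilon>0$. For each $i$ and each $x\in\Omega$, the Chernoff bound (in Hoeffding form, which is uniform in the success probability) gives
\[
\Probability\Big[\big|\deg_G(i)-(n-1)\deg_W(x)\big|\ge \tfrac{\varepsilon}{2} n \;\Big|\; X_i=x\Big]\le 2\exp\big(-\varepsilon^2 n/2\big)\;.
\]
Since this bound does not depend on $x$, integrating over $x$ yields the same bound unconditionally for the event $|\deg_G(i)-(n-1)\deg_W(X_i)|\ge \frac{\varepsilon}{2}n$. Next we absorb the discrepancy between $n-1$ and $n$: we have $|(n-1)\deg_W(X_i)-n\deg_W(X_i)|\le 1\le\frac{\varepsilon}{2}n$ once $n\ge 2/\varepsilon$. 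Consequently, outside the bad event we get $\deg_G(i)=(\deg_W(X_i)\pm\varepsilon)n$.

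Finally, a union bound over all $i\in[n]$ shows that the failure probability is at most $2n\exp(-\varepsilon^2 n/2)$. This tends to $0$, so it is at most $\varepsilon$ for all $n\ge n_0(\varepsilon)$, and we choose $n_0$ to also satisfy $n_0\ge 2/\varepsilon$. Note that $n_0$ depends only on $\varepsilon$, not even on $W$. The only point requiring care is the conditional independence argument in the first step, namely that conditioning on $X_i$ alone leaves the other types integrated out. This is the step where measurability is used, via Fubini, to identify the conditional edge probability with $\deg_W(X_i)$. The remaining steps are routine.
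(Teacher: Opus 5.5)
Your proof is correct and is essentially the argument the paper intends; the paper gives no details beyond remarking that the fact ``follows from a direct application of the Chernoff bound''. Your conditioning on $X_i$, which makes $\deg_G(i)\sim\mathrm{Bin}(n-1,\deg_W(X_i))$, followed by an additive Chernoff--Hoeffding bound at deviation $\varepsilon n/2$ and a union bound over the $n$ vertices, is the standard way to fill in that remark.
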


Secondly, a sample is close to the graphon in cut norm distance. This follows from the so-called second sampling lemma~\cite[Lemma 10.16]{LOVASZ2006933}.

\begin{lem}\label{lem:sampledist}
    Let $W$ be a graphon and $\varepsilon>0$. There exists $n_0\in\mathbb N$ such that for every $n\geq n_0$ and for $G\sim \G(n,W)$ the following holds. The probability that there exists a graphon representation $W_G$ of $G$ such that $\cutn{W_G-W}\leq\varepsilon$ is at least $1-\varepsilon$.
\end{lem}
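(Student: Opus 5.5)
The plan is to deduce Lemma~\ref{lem:sampledist} from the second sampling lemma~\cite[Lemma 10.16]{LOVASZ2006933}, splitting the randomness into the two generation steps~\ref{G1} and~\ref{G2}. Let $H=H(n,W)$ be the weighted graph on $[n]$ with weights $H_{ij}=W(X_i,X_j)$; this is the outcome of~\ref{G1}. The lemma then follows from two estimates, each valid with probability at least $1-\varepsilon/2$ for $n$ large:
\[
\delta_\square(H,W)\le \varepsilon/2
\qquad\text{and}\qquad
\cutn{W_G-W_H}\le\varepsilon/2\;.
\]
In the second estimate, $W_G$ and $W_H$ are graphon representations of $G$ and $H$ taken with respect to the \emph{same} partition $\Omega=\bigsqcup_{i}\Omega_i$. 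The triangle inequality for the cut norm then finishes the proof, with one caveat about the distance notion used in the first estimate.

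\emph{First step, the weighted sample.} The second sampling lemma states that with probability at least $1-\exp(-n/(2\ln n))$ we have $\delta_\square(H(n,W),W)\le 20/\sqrt{\ln n}$. This is stated for the cut \emph{distance}, which is an infimum over measure-preserving rearrangements. Our definition of ``graphon representation'' lets us choose the partition $\Omega=\bigsqcup_i\Omega_i$ freely. Choosing it by pulling back an optimal (or near-optimal) rearrangement therefore converts the cut distance bound into $\cutn{W_H-W}\le\varepsilon/2$ for a suitable representation $W_H$.

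This conversion is the one subtle point, and I expect it to be the main obstacle. In general the infimum over measure-preserving maps $\varphi$ of $\cutn{W^\varphi-U}$ is not attained by a map of $W$ alone. The standard remedy is to use that, on a standard atomless space, the cut distance can be computed as
\[
\inf_{\varphi}\,\cutn{W-U^{\varphi}}\;,
\]
where $\varphi$ ranges over measure-preserving bijections (mod null sets) $\Omega\to\Omega$ and $U$ is a step function with $n$ equal steps~\cite[Theorem 8.13]{MR3012035}. Applying $\varphi$ to a step graphon simply produces another equipartition $\{\varphi^{-1}(\Omega_i)\}$. So I get a partition whose representation of $H$ is within $\varepsilon/2+\eta$ of $W$, with $\eta$ arbitrarily small, and I budget $\eta$ into~$\varepsilon$.

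\emph{Second step, rounding the weights.} Condition on $X_1,\dots,X_n$ and fix the partition chosen above. Then $W_G-W_H$ is a step function whose entries $A_{ij}-H_{ij}$ are independent (for $i<j$), centered, and bounded. Its cut norm equals $n^{-2}\max_{S,T\subset[n]}\bigl|\sum_{i\in S,j\in T}(A_{ij}-H_{ij})\bigr|$, because a bilinear form is extremized at $0/1$ vectors. For fixed $S,T$, Hoeffding's inequality bounds the probability that this sum exceeds $\varepsilon n^2/2$ by $2\exp(-c\varepsilon^2n^2)$. A union bound over the $4^n$ pairs $(S,T)$ gives total failure probability $4^n\cdot 2e^{-c\varepsilon^2 n^2}\to0$. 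Diagonal terms contribute only $O(1/n)$. Combining both steps and choosing $n_0$ so that $20/\sqrt{\ln n}<\varepsilon/2$ and both failure probabilities are below $\varepsilon/2$ yields the claim.
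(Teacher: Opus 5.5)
Your proposal is correct and takes essentially the paper's route: the paper just invokes the second sampling lemma, and your passage from the $\delta_\square$ bound to one concrete graphon representation, via measure-preserving bijections that permute equipartitions, is exactly the step the paper leaves implicit. The second sampling lemma as usually stated (Lemma~10.16 in Lov\'asz's monograph) already gives $\delta_\square(\G(n,W),W)\le 22/\sqrt{\ln n}$ with the same probability, so your Hoeffding rounding step from the weighted sample to $\G(n,W)$ is correct but unnecessary.
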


We will also make use of the following result which states that a graph which is sufficiently close to a connected graphon cannot have two linear-sized connected components.

\begin{prop}[Theorem~1.10(i) in~\cite{ConnectednessGraphons}]\label{prop:connectedgraphon}
Suppose that $W$ is a connected graphon. For every $\zeta>0$ there exists $\kappa>0$ with the following property. Suppose that $H$ is a weighted graph whose cut distance from $W$ is at most $\kappa$. Then for every set $X\subset V(H)$ with $|X|\in[\zeta v(H),\frac12 v(H)]$ we have $e_H(X,V(H)\setminus X)>0$.
\end{prop}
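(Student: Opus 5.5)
The plan is to argue by contradiction and use compactness, passing from a sequence of bad weighted graphs to a limiting fractional cut in $W$. Fix $\zeta>0$ and suppose no suitable $\kappa$ exists. Then for every $k\in\N$ there are a weighted graph $H_k$ at cut distance at most $\frac1k$ from $W$ and a set $X_k\subset V(H_k)$ with $|X_k|\in[\zeta v(H_k),\frac12 v(H_k)]$ and $e_{H_k}(X_k,V(H_k)\setminus X_k)=0$.

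\textbf{Step 1: translate the bad cuts into sets in $\Omega$.} Let $U_k$ be a graphon representation of $H_k$, with partition $\Omega=\bigsqcup_v\Omega_v$, such that $\cutn{W-U_k}\le\frac1k$. Put $S_k:=\bigcup_{v\in X_k}\Omega_v$, so that $\mu(S_k)\in[\zeta,\frac12]$. Since $X_k$ sends no edges to its complement, $U_k$ vanishes on $S_k\times S_k^c$. By the definition of the cut norm,
\[
\int_{S_k\times S_k^c}W\le \int_{S_k\times S_k^c}U_k+\tfrac1k=\tfrac1k .
\]

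\textbf{Step 2: pass to a limit (the main step).} By Banach--Alaoglu, after passing to a subsequence, the indicators $f_k:=\mathbb 1_{S_k}$ converge weak$^*$ in $L^\infty(\Omega)$, hence weakly in $L^2(\Omega)$, to some measurable $f:\Omega\to[0,1]$ with $\int f\in[\zeta,\frac12]$.

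The difficulty is that the integrand $f_k(x)(1-f_k(y))$ is a product of weakly convergent sequences, so its integral need not converge to the corresponding integral for $f$. To overcome this I use that the kernel operator $T_W g(y):=\int W(x,y)g(x)\,\diff\mu(x)$ is Hilbert--Schmidt and therefore compact on $L^2(\Omega)$. Compactness turns weak convergence $f_k\rightharpoonup f$ into strong convergence $T_Wf_k\to T_Wf$ in $L^2$. We have
\[
\int_{S_k\times S_k^c}W=\big\langle 1-f_k,\,T_Wf_k\big\rangle .
\]
In this pairing one factor converges weakly and the other strongly, so the pairing converges to $\langle 1-f,T_Wf\rangle$. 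We conclude that
\[
\int_{\Omega^2} W(x,y)\,f(x)\,(1-f(y))\,\diff\mu^2=0 .
\]
Since $W\ge0$, this gives $W=0$ almost everywhere on $\{f>0\}\times\{f<1\}$.

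\textbf{Step 3: extract a genuine disconnection of $W$.} There are two cases.
\begin{itemize}
\item If $\mu(\{f=0\})>0$, take $S:=\{f>0\}$ and $T:=\{f=0\}$. Then $\mu(S)\ge\int f\ge\zeta>0$ and $\mu(T)>0$. Since $T\subset\{f<1\}$, we get $W=0$ on $S\times T$.
\item Otherwise $f>0$ almost everywhere, so $W$ vanishes on $\Omega\times\{f<1\}$. Because $\int f\le\frac12$, the set $\{f<1\}$ has measure at least $\frac12$. If its complement has positive measure, then $S:=\{f<1\}$ and $T:=\{f=1\}$ give the split. If its complement is null, then $W\equiv0$ almost everywhere, and any split of $\Omega$ into two sets of positive measure works.
\end{itemize}
In every case $\int_{S\times T}W=0$ with $\mu(S),\mu(T)>0$, which contradicts the connectivity of $W$. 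Hence the desired $\kappa$ exists.

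I expect the only delicate point to be Step 2, namely the passage to the limit in the bilinear expression, which is handled by the compactness of $T_W$. Everything else is bookkeeping, helped by the fact that the paper's notion of cut distance uses a fixed representation without measure-preserving relabelings.
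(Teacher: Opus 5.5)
\paragraph{Assessment.} Your argument is correct. The paper gives no proof of this proposition: it imports it as Theorem~1.10(i) of the cited work, so there is no in-paper proof to match.

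\paragraph{Checking the steps.} Step~1 fits the paper's definition of cut distance, which only asks for some representation $U_k$ with $\cutn{W-U_k}\le\frac1k$. In Step~2, weak$^*$ convergence in $L^\infty$ does give weak convergence in $L^2$ on a probability space. The weak--strong pairing with the compact Hilbert--Schmidt operator $T_W$ then correctly yields $\int_{\Omega^2}W(x,y)f(x)(1-f(y))\,\diff\mu^2=0$. The case split in Step~3 is also sound; in the first case, $\mu(\{f>0\})\ge\int f\ge\zeta$ because $f\le 1$.

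\paragraph{Comparison with the paper's own machinery.} Claim~\ref{cl:doublyindependent} in the proof of Proposition~\ref{prop:weakstarlimit} is exactly the limit statement your Step~2 needs. Apply it with $W_n:=U_k$, $A_n:=S_k$ and $B_n:=\Omega\setminus S_k$, so the weak$^*$ limits are $a=f$ and $b=1-f$. It then gives $\int_{\{f>0\}\times\{f<1\}}W=0$ directly.

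The two proofs of this limit step differ as follows.
\begin{itemize}
\item The paper uses an elementary density argument. It finds a rectangle $X\times Y$ inside $\supp_\delta(a)\times\supp_\delta(b)$ on most of which $W>c$. It then plays the cut-norm estimate for $W_n$ on $A_n\times B_n$ against a lower bound on $(X\cap A_n)\times(Y\cap B_n)$.
\item You first move the cut-norm error onto the fixed limit $W$. You then use compactness of $T_W$ to handle the product of two weakly convergent factors.
\end{itemize}
Your route is shorter and produces the integral identity in one stroke, but it needs Hilbert--Schmidt compactness. The paper's route uses only Banach--Alaoglu and measure theory, and it works with supports, which is what it later needs for half-integral covers. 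Both arguments are non-quantitative, which is all the statement requires.
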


\subsection{Probability theory}
We need two versions of the Chernoff bound which can for example be derived from~\cite[Theorem 2.1]{JLR}.
\begin{lem}\label{lem:Chernoff}
Suppose that $n\in\N$, $d\in[0,1]$ and $C\ge 1$. Suppose that $B$ is a random variable with binomial distribution with parameters $n$ and $d$.
\begin{enumerate}[(i)]
    \item\label{it:Chernoff1}$\Probability[B\le \frac12nd]\le \exp(-nd/8)$, and
    \item\label{it:Chernoff2}$\Probability[B\ge Cnd]\le \exp(-ndC\ln(C/e))$.
\end{enumerate}
\end{lem}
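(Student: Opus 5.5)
This is a standard Chernoff bound, and I will derive both parts from the exponential moment (Chernoff--Cramér) method. Write $B=\sum_{i=1}^n\xi_i$ with independent Bernoulli($d$) variables $\xi_i$, and set $\lambda:=nd=\Expectation B$. For any $s\in\R$ we have
\[
\Expectation e^{sB}=(1+d(e^s-1))^n\le \exp\bigl(\lambda(e^s-1)\bigr),
\]
using $1+x\le e^x$. Markov's inequality applied to $e^{sB}$, with $s>0$ for the upper tail and $s<0$ for the lower tail, then reduces each part to optimizing the exponent in $s$. The degenerate cases $d=0$ (where $B\equiv 0$ and both bounds are trivial, as $\Probability[B\le 0]=1=\exp(0)$ and similarly in (ii)) and $\lambda=0$ are checked separately.

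For part (ii), take $t=C\lambda$ with $C\ge1$, and choose $s=\ln C\ge0$. This gives
\[
\Probability[B\ge C\lambda]\le \exp\bigl(\lambda(C-1)-C\lambda\ln C\bigr)=\exp\bigl(-\lambda(C\ln C-C+1)\bigr).
\]
Since $C\ln C-C+1\ge C\ln C-C=C\ln(C/e)$, we get the claimed bound $\exp(-ndC\ln(C/e))$. Note that for $C<e$ the claimed exponent is positive, so the bound is trivially true there anyway.

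For part (i), take $s=-\ln 2$. Markov's inequality applied to $e^{-sB}$, with the sign bookkeeping done carefully, gives
\[
\Probability[B\le \lambda/2]\le \exp\bigl(\lambda(e^{s}-1)-s\lambda/2\bigr)=\exp\bigl(-\lambda(\tfrac12-\tfrac12\ln2)\bigr).
\]
The constant here is $\tfrac12(1-\ln 2)\approx0.153\ge\tfrac18$, which yields $\exp(-nd/8)$.

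The only care point is this numerical comparison of constants and the sign bookkeeping for the lower tail. Alternatively, one could simply cite \cite[Theorem 2.1]{JLR}, namely $\Probability[B\le\lambda-t]\le\exp(-t^2/(2\lambda))$ with $t=\lambda/2$, together with its upper-tail form $\exp(-\lambda\varphi(C-1))$ where $\varphi(x)=(1+x)\ln(1+x)-x$. Both routes are routine, and I expect no genuine obstacle.
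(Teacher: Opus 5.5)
Your proposal is correct and takes essentially the paper's route: the paper only says both bounds can be derived from \cite[Theorem 2.1]{JLR}, and your exponential-moment computation is the standard proof of that theorem. Your choice $s=-\ln 2$ is optimal and gives the sharper constant $\tfrac12(1-\ln 2)$, while $s=\ln C$ gives exactly $\lambda\varphi(C-1)$. One cosmetic slip: since $s=-\ln 2<0$, Markov's inequality should be applied to $e^{sB}$, using $\{B\le\lambda/2\}=\{e^{sB}\ge e^{s\lambda/2}\}$, not to $e^{-sB}$; the bound you display is nevertheless the correct one.
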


\subsection{Functional analysis}\label{ssec:FunctionalAnalysis}

In this section, we collect several definitions and tools from functional analysis tailored to our setting. Throughout, let $(\Omega, \mu)$ be a probability space with an implicit $\sigma$-algebra. For a measurable function $f: \Omega \to \R$, we define its \emph{support} as $\supp(f) = \{x \in \Omega : f(x) \neq 0\}$. For any $\delta > 0$, we define the \emph{$\delta$-essential support} as $\supp_\delta(f) = \{x \in \Omega : f(x) > \delta\}$.

The space $L^\infty(\Omega)$ is the dual of $L^1(\Omega)$. This duality induces the \emph{weak* topology}. While the underlying machinery of Banach spaces is quite deep, the discrete-minded reader can safely view this topology as a convenient formalism for the ``convergence of averages'' over arbitrary test sets (for a standard reference, see, e.g., \cite[Chapter 5]{Folland1999}). For our purposes, we use the following concrete characterization: a sequence of functions $g_1, g_2, \ldots \in L^\infty(\Omega)$ \emph{converges weak*} to a function $g \in L^\infty(\Omega)$ if, for every measurable set $X \subset \Omega$, we have 
\[ \lim_{n \to \infty} \int_X g_n \diff\mu = \int_X g \diff\mu .\]

The following fact is trivial to verify. We record it here for a later reference.
\begin{fact}\label{fact:weakstarbounded}
 Suppose that a sequence functions $g_1, g_2, \ldots :\Omega\to[0,1]$ converges weak* to a function $g \in L^\infty(\Omega)$. Then we have $g(x)\in[0,1]$ for almost every $x\in\Omega$.
\end{fact}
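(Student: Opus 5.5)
The plan is to test the weak* convergence against sets of the form $X_+:=\{x: g(x)>1\}$ and $X_-:=\{x: g(x)<0\}$ and derive a contradiction if either has positive measure. Both sets are measurable because $g$ is measurable, so weak* convergence applies to them. The only care needed is to use these ``bad'' sets as the test sets rather than trying to argue pointwise.

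First, suppose $\mu(X_+)>0$. Write $X_+=\bigcup_{k\ge1}\{x: g(x)>1+\frac1k\}$. By countable subadditivity there is some $k$ with $Y:=\{g>1+\frac1k\}$ of positive measure. Since each $g_n$ takes values in $[0,1]$, we have $\int_Y g_n\diff\mu\le\mu(Y)$ for every $n$. Passing to the limit gives $\int_Y g\diff\mu\le\mu(Y)$. On the other hand, $\int_Y g\diff\mu\ge(1+\frac1k)\mu(Y)>\mu(Y)$, which is a contradiction. Hence $g\le1$ almost everywhere.

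Second, the symmetric argument handles $X_-$. Choose $Z:=\{g<-\frac1k\}$ of positive measure. Then $\int_Z g_n\diff\mu\ge0$ for all $n$, so $\int_Z g\diff\mu\ge0$ in the limit. But $\int_Z g\diff\mu\le -\frac1k\mu(Z)<0$, again a contradiction. Thus $g\ge0$ almost everywhere, and so $g(x)\in[0,1]$ for almost every $x$.

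There is no real obstacle here. The only point is the choice of test sets described above, together with the countable-union step that upgrades a strict inequality into one with a uniform margin.
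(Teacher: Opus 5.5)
Your argument is correct: testing the weak* convergence against the measurable sets $\{g>1+\frac1k\}$ and $\{g<-\frac1k\}$ contradicts the bounds $0\le\int_X g_n\diff\mu\le\mu(X)$, and the paper gives no proof at all, recording the fact as trivial, so this standard verification is presumably what it has in mind. (The countable-union step is harmless but not needed, since $g-1>0$ on a set of positive measure already forces $\int_{\{g>1\}}(g-1)\diff\mu>0$.)
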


A central pillar of our analysis is the \emph{Banach--Alaoglu Theorem}. In the current context, it ensures that the unit ball of $L^\infty(\Omega)$ is weak* sequentially compact. Specifically, given any sequence of functions $g_1, g_2, \ldots : \Omega \to [0,1]$, there exists a subsequence $g_{n_1}, g_{n_2}, \ldots$ and a measurable function $g\in   L^\infty(\Omega)$ such that $g_{n_k}$ converges weak* to $g$ as $k \to \infty$. This compactness-based framework—which allows for the extraction of a continuous limit from a sequence of discrete objects (such as vertex covers of growing finite graphs)—has proven highly effective in the study of graphons, as demonstrated in~\cite{HLADKY2020103108, DolezalHladky:MatchingPolytons, MR3879960}.

\subsection{Asymptotic notation}
When using multiple parameters, we will sometimes define them via \emph{hierarchies}.
Those are defined from right to left.
That is we denote by $\beta\ll \alpha$ that $\beta$ is chosen sufficiently small with respect to $\alpha$. More precisely, for any $\alpha>0$ there exists $\beta_0>0$ such that for any $0<\beta\leq \beta_0$ the subsequent statement holds.
This extends in the natural way to hierarchies consisting of several parameters.

\subsection{Regularity lemma}\label{ssec:RegLemma}
A substantial part of our proof relies on the regularity method for which we give the necessary background in this section. For two nonempty sets $X$ and $Y$ of vertices of a graph $G$, the \emph{density of $(X,Y)$} is defined as $d_G(X,Y)=\frac{e(X,Y)}{|X||Y|}$. A pair $(A,B)$ of disjoint nonempty sets of vertices is \emph{$\eps$-regular} if $\left|d_G(A,B)-d_{G}(A',B')\right|\le \eps$ for every $A'\subset A$ and $B'\subset B$ with $|A'|\ge\eps|A|$ and $|B'|\ge\eps|B|$. Subsets of a regular pair inherit some regularity via the following \emph{slicing lemma}.

\begin{fact}[{\cite[Fact 1.5]{regularity-oldsurvey}}]\label{fact:regpairinherit}
Let $G$ be a graph and $\alpha>\eps>0$. If
$(X,Y)$ is an $\eps$-regular pair of density $d$ in $G$ and $X'\subseteq X$ with $|X'|\geq\alpha|X|$ and $Y'\subseteq Y$ with $|Y'|\geq\alpha|Y|$, then $(X',Y')$ is an 
$\eps'$-regular pair with $\eps'=\max\{\eps/\alpha,2\eps\}$ and for its density $d'$ we have $|d'-d|<\eps$.
\end{fact}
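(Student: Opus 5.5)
The plan is to reduce everything to the $\eps$-regularity of $(X,Y)$ by checking that all the sets we need to test are large enough subsets of $X$ and $Y$. We then compare densities through the triangle inequality, using the original density $d$ as an intermediate value. No earlier results of the paper are needed beyond the definition of an $\eps$-regular pair.

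First we control $d'=d_G(X',Y')$. Since $\alpha>\eps$, we have $|X'|\ge\alpha|X|\ge\eps|X|$ and $|Y'|\ge\eps|Y|$. The pair $(X',Y')$ is therefore itself an admissible test pair for the regularity of $(X,Y)$, and we obtain $|d'-d|\le\eps$.

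Next we verify $\eps'$-regularity of $(X',Y')$. Take arbitrary $A''\subseteq X'$ and $B''\subseteq Y'$ with $|A''|\ge\eps'|X'|$ and $|B''|\ge\eps'|Y'|$. Because $\eps'\ge\eps/\alpha$, we get
\[
|A''|\ge \frac{\eps}{\alpha}\cdot\alpha|X|=\eps|X|,
\]
and likewise $|B''|\ge\eps|Y|$. So $(A'',B'')$ is also admissible for $(X,Y)$, which gives $|d_G(A'',B'')-d|\le\eps$. Combining this with the first step,
\[
|d_G(A'',B'')-d'|\le |d_G(A'',B'')-d|+|d-d'|\le 2\eps\le\eps'.
\]
Here the term $2\eps$ in the definition of $\eps'$ is used. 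This establishes that $(X',Y')$ is $\eps'$-regular.

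The only delicate point, and the step I expect to need care, is the strict inequality $|d'-d|<\eps$. With the non-strict definition of regularity used in this paper, the argument above only yields $|d'-d|\le\eps$. The strict form as cited from the survey relies on that source's convention for regularity. I would handle this in one of two ways: either note that the non-strict bound is all that is used later, or apply regularity with a marginally smaller parameter, where there is slack because $\alpha>\eps$ strictly.
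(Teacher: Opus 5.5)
Your core argument is correct and is the standard proof of the slicing lemma. The paper gives no proof of its own and relies on the citation to the Koml\'os--Simonovits survey, whose argument is exactly yours. The sets $X',Y'$, and any $A''\subseteq X'$, $B''\subseteq Y'$ with $|A''|\ge\eps'|X'|$, $|B''|\ge\eps'|Y'|$, are admissible test sets for $(X,Y)$ because $\alpha>\eps$ and $\eps'\alpha\ge\eps$. The triangle inequality through $d$, together with $2\eps\le\eps'$, then finishes.

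The one thing to correct is your second remedy for the strict inequality. Applying regularity ``with a marginally smaller parameter'' is not available. For $\eps_0<\eps$, $\eps_0$-regularity is a \emph{stronger} property than $\eps$-regularity, so it does not follow from the hypothesis. The slack $\alpha>\eps$ only makes $X',Y'$ admissible; it does not shrink the allowed deviation.

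In fact, under the paper's non-strict definition the bound $|d'-d|<\eps$ is false. Let $|X|=4$, $|Y|=3$, and pick $X'\subseteq X$, $Y'\subseteq Y$ with $|X'|=3$, $|Y'|=2$. Let the edges be exactly the pairs between $X'$ and $Y'$. Then $d=\frac{6}{12}=\frac12$, so $(X,Y)$ is trivially $\frac12$-regular, since every density lies in $[0,1]$. With $\eps=\frac12<\alpha=\frac23$ the sets $X',Y'$ meet the size conditions, yet $d'=1$ and $|d'-d|=\eps$.

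So the correct resolution is your first option. Either state and prove $|d'-d|\le\eps$, which is all the paper ever uses: every application (Lemma~\ref{lem:path-oddcycle}, Steps~2 and~4 of Section~\ref{sec:proof}) only needs density at least $d-\eps\ge d/2$. Or adopt the strict convention of the cited survey, where test sets satisfy $|A'|>\eps|A|$ and the deviation must be $<\eps$. Under that convention your argument goes through verbatim and does give strictness, since $|X'|\ge\alpha|X|>\eps|X|$ and $|A''|>\eps'|X'|\ge\eps|X|$.
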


We say that a partition $V(G)=V_1\sqcup \ldots \sqcup V_\ell$, where the sets $V_1,\ldots,V_\ell$ are called \emph{clusters}, is an \emph{$\eps$-regular partition} of $G$ if we have
\begin{enumerate}[label=(R\arabic*)]
    \item \label{P:2} $\big| |V_i|-|V_j|\big|\le 1$ for every $i,j\in[\ell]$, and
	\item\label{P:3} for all but at most $\eps\ell^2$ many pairs of indices $(i,j)\in[\ell]^2$ we have that $(V_i,V_j)$ is an $\eps$-regular pair.
\end{enumerate}

For constructing the Hamilton cycle our proof is based on the well-known \emph{regularity lemma} in its \emph{degree form}.
The theorem below is a simple modification of it, where we start the proof with a prepartition. 

\begin{thm}[{\cite[Theorem 1.10]{regularity-oldsurvey}}]\label{thm:reglem}
    For every $\eps>0$, there exists a constant $T$ such that for every graph $G$ with~$V(G)=X\sqcup Y$, where $|X|,|Y|\in\{0\}\cup[\eps n,n]$, and any $d\in[0,1]$ the following holds.
    There is an $\eps$-regular partition~$V_1,\dots, V_t$ with $\frac1\eps< t\leq T$ and such that~$V_i\subseteq X$ or~$V_i\subseteq Y$ for every~$i\in [t]$.
    Moreover, if we let~$G'$ be the graph after removing all edges~$uv\in E(G)$ with
    \begin{itemize}
        \item $u,v\in V_i$ for some~$i\in [\ell]$, 
        \item $u\in V_i$,~$v\in V_j$, and~$(V_i,V_j)$ is not~$\eps$-regular, or
        \item $u\in V_i$,~$v\in V_j$, and~$(V_i,V_j)$ has density smaller than~$d$, 
    \end{itemize}
    then for every vertex~$v\in V(G)$ we have $\deg_{G}(v) < \deg_{G'}(v) + 2dn$.
\end{thm}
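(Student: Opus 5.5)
We derive the statement from the standard degree form of Szemerédi's regularity lemma (\cite[Theorem 1.10]{regularity-oldsurvey}). Two changes are needed: the initial partition $\{X,Y\}$ must be respected, and the exceptional set $V_0$ must be absorbed so that we get an equipartition satisfying \ref{P:2}. We work with an auxiliary constant $\eps'\ll\eps$ (also $\eps'\ll d$ whenever $d>0$) and take $T=T(\eps)$ large.

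\textbf{Step 1: regular partition refining $\{X,Y\}$.} The usual index-increment proof works unchanged if we start from the prepartition $\{X,Y\}$, or from $\{X\}$ or $\{Y\}$ when the other side is empty. We first cut $X$ and $Y$ into roughly $1/\eps'$ equal pieces of size about $\eps'n$, putting the rounding leftovers into an exceptional set. Each refinement step splits every current cluster into pieces. This keeps every cluster inside $X$ or inside $Y$, and the mean-square density still rises by $\eps'^5$ per step. So after boundedly many steps we have an $\eps'$-regular partition $V_0,V_1,\dots,V_t$ with $|V_0|\le\eps' n$, equal cluster sizes, $1/\eps<t\le T'$, and each $V_i\subseteq X$ or $V_i\subseteq Y$. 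The hypothesis $|X|,|Y|\in\{0\}\cup[\eps n,n]$ ensures that each nonempty side receives linearly many clusters.

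\textbf{Step 2: degree cleaning as in the survey.} For each vertex $v\in V_i$, $i\ge1$, we count its edges that Theorem~\ref{thm:reglem} would delete: edges inside $V_i$, at most $n/t<\eps n$; edges into clusters $V_j$ with $(V_i,V_j)$ irregular; and edges into clusters $V_j$ with $d(V_i,V_j)<d$. There are at most $\eps' t^2$ irregular pairs. Also, by $\eps'$-regularity, at most $\eps'|V_i|$ vertices of $V_i$ have more than $(d+\eps')|V_j|$ neighbours in a low-density partner $V_j$. Summing over pairs, a Markov-type count shows that all but at most $\sqrt{\eps'}\,n$ vertices lose fewer than $(d+\sqrt{\eps'})n$ edges. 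We move the remaining atypical vertices, together with any cluster involved in too many irregular pairs, into $V_0$. This keeps $|V_0|\le \eps'' n$ with $\eps'\ll\eps''\ll\eps$.

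\textbf{Step 3: absorbing $V_0$.} We distribute $V_0$ among the clusters on its own side ($X$ or $Y$) so that the final cluster sizes differ by at most one. Each cluster grows by at most a $\sqrt{\eps''}$ fraction, so a slicing-type argument in the spirit of Fact~\ref{fact:regpairinherit}, applied to the enlarged pairs, shows that pairs which were $\eps'$-regular remain $\eps$-regular. Densities also change by $o(\eps)$, and this is absorbed by working with threshold $d-\eps$ internally. Typical vertices then lose fewer than $(d+\eps)n\le 2dn$ edges; when $d=0$, the strict inequality is handled by the $\eps n$ slack.

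\textbf{Main obstacle.} The main difficulty is the degree guarantee for the redistributed exceptional vertices. Such a vertex $v$ may have many neighbours in clusters $V_j$ that form low-density pairs with the cluster $v$ lands in. To handle this, we place $v$ greedily into the cluster $V_i$ on its side that minimises the number of edges from $v$ to the low-density and irregular partners of $V_i$. To show this loss is below $2dn$ we use capacity: every cluster receives only $O(\sqrt{\eps''}|V_i|)$ new vertices, so we have freedom in the choice. We then argue by averaging over the many clusters on $v$'s side, using that their pairs with the $V_j$'s are regular.

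If the averaging fails for $d$ very small, I would instead follow the survey's convention: count edges between $V_0$ and the clusters as part of the allowed loss, and verify that the $2dn$ (rather than $(d+\eps)n$) slack in the statement is exactly what is needed to cover this.
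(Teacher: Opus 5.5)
\emph{Verdict: there is a genuine gap in Step~3 and in your handling of small $d$, and it cannot be closed because the statement, read literally, is false.}

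Your Steps~1--2 are exactly what the paper intends. It gives no argument beyond citing the degree form in \cite{regularity-oldsurvey} and noting that the refinement starts from the prepartition $\{X,Y\}$. That argument delivers the survey's conclusion: an exceptional set $V_0$ with $|V_0|\le\eps n$, equal-size clusters each inside $X$ or $Y$, and degree loss at most $(d+\eps)n$. Step~3 and your treatment of small $d$ try to upgrade this to the statement as written: no exceptional set, cluster sizes differing by at most one, and loss below $2dn$ for every vertex and every $d\in[0,1]$, with $T=T(\eps)$.

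First, small $d$. The inequality $(d+\eps)n\le 2dn$ needs $d\ge\eps$. You may not take $\eps'\ll d$, since $T$ must not depend on $d$. There is also no ``$\eps n$ slack'' in the statement. For $d=0$ the conclusion reads $\deg_G(v)<\deg_{G'}(v)$, which is impossible because $G'\subseteq G$. For $G=K_n$ with $0<d<1/(4T)$ and $n>4T$, every vertex loses its at least $n/T-2>2dn$ neighbours inside its own cluster.

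Second, the balancing in Step~3 is impossible in general. Suppose $t_X$ clusters with sizes in $\{m,m+1\}$ partition $X$ and $t_Y$ such clusters partition $Y$. Then $|X|/t_X,|Y|/t_Y\in[m,m+1]$, so $\bigl||X|t-nt_X\bigr|=\bigl||X|t_Y-|Y|t_X\bigr|\le t_Xt_Y\le T^2$. But for $\eps<1/4$ and $|X|=\lfloor n/\sqrt2\rfloor$, the left-hand side is at least $c_Tn-T$, where $c_T:=\min_{1\le t\le T}\min_{k\in\mathbb Z}|t/\sqrt2-k|>0$.

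Third, your ``main obstacle'' cannot be overcome by greedy placement. Take $\eps=1/100$, $d=1/10$, $X=V(G)$, $Y=\emptyset$. Let $G$ be two disjoint cliques on $P$ and $Q$, each of size $(n-1)/2$, plus a vertex $v$ adjacent to everything, and let $V_i\ni v$.
If $|V_i\cap P|,|V_i\cap Q|\ge\eps|V_i|$, then every pair $(V_i,V_j)$ is irregular. Indeed, $(V_i\cap P,V_j\cap P)$ and $(V_i\cap Q,V_j\cap P)$ have densities $1$ and $0$ (use $V_j\cap Q$ instead if $V_j\cap P$ is small). So $v$ loses all its edges.
Otherwise, say $|V_i\cap Q|<\eps|V_i|$. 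Pairs with clusters satisfying $|V_j\cap P|<\eps|V_j|$ have density below $3\eps<d$ for $n$ large. Pairs with clusters meeting both $P$ and $Q$ in at least an $\eps$-fraction are irregular. Hence $v$ keeps fewer than $\eps n$ of its neighbours in $Q$ and loses more than $n/3>2dn$ edges.

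Every cluster $v$ could join is ``$P$-like'' or ``$Q$-like''. So averaging over the clusters on $v$'s side cannot help, and regularity of the other pairs is beside the point. Your fallback of keeping $V_0$ proves the survey's statement, not this one. The right move is to state that version with $V_0$ and loss $(d+\eps)n$, which your Steps~1--2 already prove. Its use in Section~\ref{sec:proof} must then be adjusted so that $V_0$ is handled separately.
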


For $\eps,d>0$ and given a graph $G$ with a partition $V(G)=V_1\sqcup \ldots \sqcup V_\ell$ we say that an weighted graph $R$ is the \emph{$(\eps,d)$-reduced graph} of $G$ with respect to the partition $\{V_i\}_{i\in[\ell]}$ if $V(R)=[\ell]$ and $ij\in E(R)$ if and only if $(V_i,V_j)$ is an $\eps$-regular pair of density at least $d$. 
Furthermore, we define the weight of each edge $ij\in E(R)$ by $d_G(V_i,V_j)$. 
With a suitable representation is the reduced graph close to the original graph in the cut-norm distance.

\begin{fact}\label{fact:distredgraph}
Let $G$ be a graph with an $(\eps,d)$-reduced graph $R$ corresponding to an~$\eps$-regular partition of $G$.
Then for every graphon representation $W_G$ of $G$ there exists a graphon representation $W_R$ of $R$ such that
$$\cutn{W_G-W_R}\leq 2\eps+d\;.$$
\end{fact}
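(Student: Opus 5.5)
Throughout I use the convention that each cluster $V_i$ corresponds to a part $\Omega_i$ of $\Omega$ of measure $\frac1\ell$, so the measure of a union of clusters equals its proportion of $V(G)$. Fix a graphon representation $W_G$ of $G$ built from a partition $\Omega=\bigsqcup_{v\in V(G)}\Omega_v$, where each $\Omega_v$ has measure $\frac1{v(G)}$. Let $n=v(G)$.

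\textbf{Step 1: choose the partition for $W_R$.} The $\eps$-regular partition has clusters of sizes differing by at most one. I would set $\Omega_i':=\bigcup_{v\in V_i}\Omega_v$, so that $\mu(\Omega_i')=|V_i|/n$. Since $W_R$ needs parts of measure exactly $\frac1\ell$, I then modify these sets slightly to get a partition $\{\Omega_i\}$ with $\mu(\Omega_i)=\frac1\ell$ and $\mu(\Omega_i\triangle\Omega'_i)\le\frac1n$. The $L^1$ error this introduces is at most about $2\ell/n$. This is absorbed into the bound, assuming clusters are large ($n\gg\ell$); when $\ell$ divides $n$ the error vanishes. I flag this rounding as a minor technical point. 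Define $W_R$ on $\Omega_i\times\Omega_j$ to be $d_G(V_i,V_j)$ if $ij\in E(R)$, and $0$ otherwise.

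\textbf{Step 2: split the error.} Compare $W_G$ with the intermediate step function $U$ that equals $d_G(V_i,V_j)$ on $\Omega_i'\times\Omega_j'$ for every pair $i\ne j$ (edges of $R$ or not), and $0$ on the diagonal blocks. Then
\[
\cutn{W_G-W_R}\le\cutn{W_G-U}+\|U-W_R\|_1 .
\]
The term $\|U-W_R\|_1$ collects three kinds of blocks where $R$ has no edge:
\begin{itemize}
\item irregular pairs, of total measure at most $\eps$;
\item pairs of density less than $d$, contributing at most $d$;
\item diagonal blocks, where $U=W_R=0$ anyway.
\end{itemize}
So this term is at most $\eps+d$.

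\textbf{Step 3: the main term $\cutn{W_G-U}$.} Fix $S,T\subset\Omega$ and write $s_v=\mu(S\cap\Omega_v)\,n\in[0,1]$ and $t_v$ similarly. The integral $\int_{S\times T}(W_G-U)$ is a sum over block pairs $(i,j)$ of bilinear expressions
\[
\frac1{n^2}\sum_{u\in V_i,\,v\in V_j}s_ut_v\bigl(\mathbb 1_{uv\in E}-d_G(V_i,V_j)\bigr),
\]
plus the diagonal blocks, which contribute at most $\frac1\ell<\eps$ in total.

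\begin{itemize}
\item For irregular pairs, bound each block trivially by its measure; the total is at most $\eps$.
\item For $\eps$-regular pairs, the expression is bilinear in $(s,t)\in[0,1]^{V_i}\times[0,1]^{V_j}$, so its extremes are attained at $0/1$ vectors, i.e.\ at subsets $A\subset V_i$, $B\subset V_j$. If $|A|\ge\eps|V_i|$ and $|B|\ge\eps|V_j|$, regularity gives $|e(A,B)-d|A||B||\le\eps|A||B|\le\eps|V_i||V_j|$. Otherwise the deviation is trivially at most $\eps|V_i||V_j|$. Summing over blocks gives at most $\eps$.
\end{itemize}

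Collecting the regular, irregular and diagonal contributions gives $\cutn{W_G-U}\le 3\eps$. Combined with Step 2 this yields about $4\eps+d$, not the claimed $2\eps+d$. To reach $2\eps+d$ I would sharpen the accounting as follows:
\begin{itemize}
\item Skip the intermediate $U$ on irregular blocks and compare $W_G$ with $0$ there directly, so irregular pairs contribute $\eps$ only once.
\item Treat low-density regular pairs the same way, noting that $\int_{S\times T}W_G$ over those blocks is at most $d$ in total.
\item Regular pairs of density at least $d$ contribute at most $\eps$.
\item The diagonal contributes at most $\frac1\ell<\eps$.
\end{itemize}
This totals roughly $2\eps+d$ plus the diagonal and rounding terms. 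The main obstacle is this constant bookkeeping; if it is too tight I would absorb the diagonal and rounding into the slack (for instance, $\frac1\ell<\eps$ and large $n$), using that the irregular pairs make up at most $\eps\ell^2$ of the $\ell^2$ blocks.
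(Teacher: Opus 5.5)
The paper states this Fact without proof, so there is no route to compare against. Your direct block-by-block comparison is the natural argument. It uses that a bilinear form on $[0,1]^{V_i}\times[0,1]^{V_j}$ is extremised at $0/1$ vectors, so $\eps$-regularity bounds each dense regular block by $\eps|V_i||V_j|/n^2$; the detour through $U$ is not needed.

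\textbf{The gap: the diagonal term.} Your final accounting does not reach the stated constant. Listing the diagonal separately gives $2\eps+d+\frac1\ell$ plus rounding. There is no slack in $2\eps+d$ to absorb $\frac1\ell$: the estimate $\frac1\ell<\eps$ only yields $3\eps+d$. The missing observation is that, with the paper's definitions, the diagonal already lies inside the $\eps$-budget. Regularity is defined only for pairs of \emph{disjoint} sets, and~\ref{P:3} ranges over all ordered pairs $(i,j)\in[\ell]^2$. So each $(V_i,V_i)$ is one of the at most $\eps\ell^2$ exceptional pairs, and the diagonal and irregular blocks together have measure at most $\eps$ (when $\ell\mid n$). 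With this, the three contributions are:
\begin{itemize}
\item diagonal and irregular blocks: $W_R=0\le W_G\le1$ there, so they contribute at most $\eps$;
\item regular pairs of density below $d$: $W_R=0$ and $W_G$ integrates to $d_G(V_i,V_j)$ times the block measure, so they contribute at most $d$;
\item regular pairs of density at least $d$: by your argument they contribute at most $\eps$.
\end{itemize}
This gives exactly $2\eps+d$.

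\textbf{Rounding.} This is a separate issue, and it cannot be absorbed either. If $\ell\nmid n$, the blocks $\Omega_i'\times\Omega_j'$ have measure $|V_i||V_j|/n^2\neq\ell^{-2}$, so the exceptional blocks may have measure up to $\eps(1+\ell/n)^2$. Reshaping the parts to measure exactly $\frac1\ell$ costs up to order $\ell/n$ in $L^1$. So your argument proves $\cutn{W_G-W_R}\le 2\eps+d+O(\ell/n)$, not the bound as literally stated. This imprecision is in the Fact itself. It is harmless for the paper, which only uses the Fact with plenty of room (e.g.\ $\eps_2/2+d_1+2\eps_1\le 2d_1$, with $n\gg r_1,r_2$). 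Still, you should state the error term explicitly, or assume $\ell\mid n$, rather than claim it disappears.
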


The next fact is an easy consequence of the so-called counting lemma (see e.g.\ \cite[Theorem 2.1]{regularity-oldsurvey}).
It follows from the fact that every walk of odd length is contained in the blow-up of a shorter walk of odd length.

\begin{fact}\label{fact:pathcounter}
    Suppose that~$\frac1n\ll\zeta \ll 1/r\ll \eps\ll d$, and that~$G$ is an $n$-vertex graph with an~$(\eps,d)$-reduced graph~$R$ on~$r$ vertices.
    If~$R$ contains a walk $Q$ of odd length~$\ell\leq 2r-1$ then, for each odd $L\in [\ell,2r-1]$, $G$ contains~$\zeta n^{L+1}$ many paths of length~$L$. Further, we may require that the first vertex of each such path is in the cluster represented by the first vertex of $Q$, and the last vertex is in the cluster represented by the last vertex of $Q$.
\end{fact}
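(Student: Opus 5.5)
The statement to prove is Fact~\ref{fact:pathcounter}. The plan is to blow up a suitable odd walk in $R$ into a sequence of regular pairs and count paths greedily, the standard counting-lemma argument.

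\textbf{Step 1: an odd walk of the exact length $L$.} Let $Q=(q_0,q_1,\dots,q_\ell)$ be the given odd walk, with $q_0$ and $q_\ell$ the prescribed end clusters. Since $L\ge \ell$ and both are odd, $L-\ell$ is even. Extend $Q$ by traversing some edge $q_{\ell-1}q_\ell$ back and forth $(L-\ell)/2$ times at the end. This gives a walk $Q'=(p_0,\dots,p_L)$ in $R$ of length exactly $L$, with the same first vertex $p_0=q_0$ and last vertex $p_L=q_\ell$. Every consecutive pair $p_{k-1}p_k$ is an edge of $R$, so $(V_{p_{k-1}},V_{p_k})$ is an $\eps$-regular pair of density at least $d$. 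Clusters may repeat along $Q'$, but only $L+1\le 2r$ vertices are ever used.

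\textbf{Step 2: greedy embedding of $P_L$ into the blow-up of $Q'$.} Each cluster has size about $n/r$. We choose the vertices $v_0\in V_{p_0},v_1\in V_{p_1},\dots$ in order and maintain the usual invariant. For each $j>k$, the set of candidates in $V_{p_j}$ must stay large, and in particular the candidate set for $v_{k+1}$ must have size at least $(d-\eps)^{c}|V_{p_{k+1}}|$ minus already used vertices. By regularity, at each step all but at most $\eps|V_{p_k}|$ choices of $v_k$ in the current candidate set $C_k$ have at least $(d-\eps)|C_{k+1}|$ neighbours in $C_{k+1}$. This requires $|C_{k+1}|\ge \eps|V_{p_{k+1}}|$, which holds because $(d-\eps)^{L}\ge (d/2)^{2r}\gg\eps$ in the hierarchy. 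Here I am using that the walk has length at most $2r-1$ and assuming the hierarchy is arranged so that $(d/2)^{2r}$ dominates $\eps$. Note that only the next vertex of the walk is constrained, not the rest of $V_{p_j}$, since a path imposes no other adjacency conditions.

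To keep the path simple, we remove the at most $L\le 2r$ already chosen vertices from each candidate set. This costs $O(r)\ll n/r$ vertices, which is negligible.

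\textbf{Step 3: counting.} At every step the number of valid choices is at least $\big((d-\eps)^{L}-2\eps\big)\frac{n}{r}-2r\ge \frac{(d/2)^{2r}}{2r}\,n$. Multiplying over the $L+1$ steps gives at least $\big((d/2)^{2r}/(2r)\big)^{2r}n^{L+1}\ge \zeta n^{L+1}$ paths, since $\zeta\ll 1/r$ with $\zeta$ chosen last. The endpoint condition holds by construction, because $v_0\in V_{q_0}$ and $v_L\in V_{q_\ell}$.

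\textbf{Main obstacle.} The only delicate point is the parameter order. The displayed hierarchy reads $\eps\ll d$ together with $1/r\ll\eps$, so $r$ is large compared to $1/\eps$, while the greedy argument needs $\eps$ small compared to $(d/2)^{2r}$. That works only if the relevant walk length is bounded independently of $r$. If it is not, I would restrict to walks of bounded length. This is possible because, as the text says, every odd walk contains the blow-up of a shorter odd walk: one can shorten to an odd closed walk or odd path of length at most some constant, and then pad by back-and-forth steps. Padding preserves counts, since a back-and-forth step $V_a,V_b,V_a$ only requires common-neighbour counts in a single regular pair and costs one factor of $(d-\eps)$ per step, not compounding regularity requirements. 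In fact the greedy step only ever needs $|C_{k+1}|\ge\eps|V_{p_{k+1}}|$ with $C_{k+1}$ being a neighbourhood of a typical vertex, which has size at least $(d-\eps)|V|\gg\eps|V|$. So no compounding occurs at all, and the counting lower bound becomes $\big((d-2\eps)n/r\big)^{L+1}$. This is at least $\zeta n^{L+1}$ once $\zeta\ll1/r$, whatever $\zeta$ is allowed to depend on. This simpler observation resolves the obstacle, and I would present the proof in that form.
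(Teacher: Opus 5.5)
Your final argument is correct and is essentially the paper's route: the paper derives this fact from the counting lemma (the Koml\'os--Simonovits Key Lemma) applied to $P_L$ inside the blow-up of the walk padded to length $L$, and your non-compounding greedy embedding is exactly that lemma's proof for maximum degree~$2$, giving at least $\big((d-2\eps)\lfloor n/r\rfloor-2r\big)^{L+1}\ge \zeta n^{L+1}$ ordered paths (at most $\eps|V_{p_k}|$ atypical vertices are lost at each step). You should drop two things, since neither is needed: the compounding invariant of Step~2, whose requirement $(d/2)^{2r}\gg\eps$ contradicts $1/r\ll\eps$; and the suggestion to shorten to an odd walk of bounded length, which is false in general because the shortest odd cycle of $R$ may have length of order $r$.
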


The following fact follows easily from Fact~\ref{fact:pathcounter}.

\begin{lem}\label{lem:path-oddcycle}
    Let~$\frac1n\ll\zeta\ll 1/r\ll \eps\ll d$ and let~$G$ be an $n$-vertex graph with an~$(\eps,d)$-reduced graph~$R$ with respect to the partition $\{V_i\}_{i\in[r]}$.
    Suppose that~$R$ is connected and non-bipartite.
    Then there is an odd~$L\le 2r$ such that for~$i,j\in V(R)$, not necessarily distinct, and every two sets~$X\subseteq V_i$ with~$|X|\geq d|V_i|$ and~$Y\subseteq V_j$ with~$|Y|\geq d|V_j|$, there are at least~$\zeta n^{L+1}$ paths of length~$L$ from~$X$ to $Y$.
\end{lem}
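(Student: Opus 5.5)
We derive Lemma~\ref{lem:path-oddcycle} from Lemma~\ref{lem:closedwalks} and Fact~\ref{fact:pathcounter}, choosing one odd length that works for all pairs at once.

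\emph{Step 1: one common odd length.} Since $R$ is connected and non-bipartite on $r$ vertices, Lemma~\ref{lem:closedwalks} gives, for every pair $i,j\in V(R)$ (possibly $i=j$), a walk $Q_{i,j}$ of odd length $\ell_{i,j}\le 2r-1$ from $i$ to $j$. We set $L:=2r-1$, which is odd, satisfies $L\le 2r$, and satisfies $L\ge \ell_{i,j}$ for all pairs. Hence the single value $L$ lies in every admissible range $[\ell_{i,j},2r-1]$ of Fact~\ref{fact:pathcounter}. If that fact is only used with the walk length itself, we pad instead: $Q_{i,j}$ followed by $(L-\ell_{i,j})/2$ back-and-forth traversals of its last edge is an odd walk of length exactly $L$ from $i$ to $j$.

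\emph{Step 2: counting from $X$ to $Y$.} Fact~\ref{fact:pathcounter} counts paths from whole clusters, whereas we need paths from $X\subseteq V_i$ to $Y\subseteq V_j$ with $|X|\ge d|V_i|$ and $|Y|\ge d|V_j|$. We therefore rerun the counting lemma argument with the first and last clusters shrunk.

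\begin{enumerate}
\item Form a graph $G'$ by replacing $V_i$ and $V_j$ by $X$ and $Y$. If $i=j$ and $X,Y$ overlap, take disjoint subsets $X'\subseteq X$, $Y'\subseteq Y$, each of size at least $\frac d2|V_i|$.
\item Every pair of $R$ along the walk touching $V_i$ or $V_j$ stays regular with density at least $d-\eps$. This follows from the slicing lemma (Fact~\ref{fact:regpairinherit}) with $\alpha=d/2$: the pair is $2\eps/d$-regular with density within $\eps$ of the original.
\item Apply Fact~\ref{fact:pathcounter} (equivalently, the counting lemma for the blown-up walk, i.e.\ a path of length $L$) with regularity parameter $2\eps/d$ and density $d/2$. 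This yields at least $\zeta' n^{L+1}$ paths of length $L$ that start in $X$ and end in $Y$.
\end{enumerate}

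The hierarchy $\zeta\ll 1/r\ll\eps\ll d$ allows $2\eps/d$ still to be small relative to $d/2$, and the loss from shrinking is a factor of at most $(d/2)^2$. Since $\zeta\ll 1/r$ and $L\le 2r$, we may take $\zeta$ below the resulting constant $\zeta'(r,\eps,d)$. As there are only $r^2$ pairs $(i,j)$, the constant can be chosen uniformly over all of them.

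\emph{Main obstacle.} The delicate point is that the path counts must be for genuine paths, not walks, and must hold uniformly over all large subsets $X,Y$. Repeated vertices are handled in the usual way: walks that revisit a vertex number $O(n^{L})$, which is negligible against $n^{L+1}$ since $1/n\ll\zeta$. The case $i=j$ is handled by the disjointness reduction in Step~2. Uniformity over $X,Y$ comes from the slicing lemma, since the counting lemma's lower bound depends only on the regularity parameter, the minimum density and the cluster sizes, all of which are controlled after shrinking.
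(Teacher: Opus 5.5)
Your proposal is correct and follows essentially the same route as the paper: set $L:=2r-1$, take the odd walks $Q_{i,j}$ from Lemma~\ref{lem:closedwalks}, replace $V_i,V_j$ by $X,Y$, and use the slicing lemma (Fact~\ref{fact:regpairinherit}) so that the reduced graph of the modified partition, with parameters of order $\eps/d$ and $d/2$, still contains the walk. Then apply Fact~\ref{fact:pathcounter}. Your extra steps, namely splitting into disjoint $X',Y'$ when $i=j$ and padding the walks to length exactly $L$, only make explicit what the paper leaves implicit.
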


\begin{proof}
For each $i,j\in V(R)$, let $Q_{i,j}$ be a walk of odd length $\ell_{i,j}\le 2r-1$ from $i$ to $j$ in $R$ as provided by Lemma~\ref{lem:closedwalks}. Let $L:=2r-1$. 
    
    Given vertices~$i,j\in V(R)$, not necessarily distinct, and two sets~$X\subseteq V_i$ with~$|X|\geq d|V_i|$ and~$Y\subseteq V_j$ with~$|Y|\geq d|V_j|$, consider the collection of clusters~$\{V_k\}_{k\neq i,j}\cup \{X\}\cup \{Y\}$, where we `replace' the clusters~$V_i$ and~$V_j$ with the sets~$X$ and~$Y$ respectively. 
    Because of Fact~\ref{fact:regpairinherit}, the~$(\eps/d, d/2)$-reduced graph~$\hat R$ of~$G\big[\bigcup_{k\neq i,j}V_k\cup X\cup Y\big]$ is a subgraph of~$R$. 
    In particular, Fact~\ref{fact:pathcounter} applied on $\hat R$ and the walk~$Q_{ij}$ yields the desired number of paths of length~$L$.
\end{proof}

We will also make use of the known fact that a regular pair with positive edge-density contains an almost spanning path. This fact follows in a straightforward way from the Blow-up lemma (or by an easy self-contained sequential procedure).

\begin{fact}\label{fact:almsppathinregp}
Let $(A,B)$ be an $(\eps,d)$-regular pair in a graph $G$, where $d>2\eps$ and~$|A|= |B|$.
Then the bipartite graph $G[A,B]$ contains a path covering all but at most $\eps |A|$ many vertices from $A$ and all but at most $\eps |B|$ many vertices from $B$.
\end{fact}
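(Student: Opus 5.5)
The plan is a greedy sequential procedure that repeatedly uses the $\eps$-regularity of $(A,B)$ to extend the current path, alternating between the two classes. At a general step the path $P=a_1b_1a_2b_2\dots$ has been built. The current last vertex, say in $A$, has been chosen to have many neighbours among the still unused vertices of $B$. We extend to such a neighbour and then onwards. The invariant to maintain is that the endpoint $v$ of $P$ satisfies $\deg(v,B_{\text{free}})\ge (d-\eps)|B_{\text{free}}|$ (or the symmetric statement with $A$ and $B$ swapped), where $A_{\text{free}}$ and $B_{\text{free}}$ denote the unused vertices. We stop once $|A_{\text{free}}|$ or $|B_{\text{free}}|$ drops below $\eps|A|$.

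The key step is choosing the next vertex so that the process can continue. Suppose the endpoint $v\in A$ has a set $N$ of at least $(d-\eps)|B_{\text{free}}|$ neighbours in $B_{\text{free}}$, and both free sets have size at least $\eps|A|$. By regularity, for a subset $S\subseteq B$ with $|S|\ge\eps|B|$, fewer than $\eps|A|$ vertices of $A$ have fewer than $(d-\eps)|S|$ neighbours in $S$. We pick $u\in N$ such that $u$ has many neighbours in $A_{\text{free}}\setminus\{v\}$, and among those neighbours at least one $w$ is "good", meaning $w$ has at least $(d-\eps)|B_{\text{free}}\setminus\{u\}|$ neighbours in $B_{\text{free}}\setminus\{u\}$. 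We then append $u,w$ to $P$.

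Why such a choice exists:
\begin{itemize}
\item The set of good vertices in $A_{\text{free}}$ misses fewer than $\eps|A|$ vertices.
\item Since $|N|\ge (d-\eps)\eps|B|$, the vertices of $N$ have typical density to $A_{\text{free}}$. This needs $|N|\ge\eps|B|$, which holds if we stop slightly earlier, say once the free sets have size $\sqrt{\eps}$-fraction or once $d\eps$-type thresholds are hit. That is the source of the loss.
\item The bad set $A_{\text{bad}}$ has size less than $\eps|A|$, while neighbourhoods of typical $u$ in $A_{\text{free}}$ have size at least $(d-\eps)|A_{\text{free}}|$. So some typical $u$ has a good neighbour provided $(d-\eps)|A_{\text{free}}|>\eps|A|$.
\end{itemize}

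The main obstacle is bookkeeping the thresholds so that the uncovered remainder is at most $\eps|A|$, as claimed, rather than something like $\sqrt\eps|A|$ or $\eps|A|/d$. The naive greedy argument only guarantees stopping when the free sets reach about $\eps|A|/(d-\eps)$. My first attempt would be to keep the balance $|A_{\text{free}}|=|B_{\text{free}}|$ up to one, so the two sides shrink together. If the constants do not close exactly, I would fall back on the intended use in the paper, where the constants are loose: apply the argument with $\eps$ replaced by a smaller regularity parameter, or invoke the Blow-up lemma route mentioned in the statement's justification. That route gives a Hamilton path in the balanced super-regular subpair obtained after discarding the at most $\eps|A|$ atypical vertices on each side.
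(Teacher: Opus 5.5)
\textbf{There is a genuine gap: your argument does not reach the bound $\eps|A|$.} As you partly acknowledge, the greedy can only continue while the endpoint's neighbourhood in $B_{\text{free}}$ has size at least $\eps|B|$ (so that regularity applies to it) and while $A_{\text{free}}$ still contains a good vertex. It therefore halts with about $\frac{\eps}{d-\eps}|A|$ uncovered vertices on each side. This always exceeds $\eps|A|$. When $d$ is only slightly larger than $2\eps$ it is close to $|A|$, so the path may cover only a small fraction of the pair.

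Neither of your fixes helps. Keeping $|A_{\text{free}}|$ and $|B_{\text{free}}|$ balanced changes nothing, since this is automatic for a path in a balanced bipartite graph. Replacing $\eps$ by a smaller regularity parameter is not an option: the pair is given to you as $\eps$-regular, and regularity can be weakened but not improved. The missing idea is that the only vertices you can afford to lose are the fewer than $\eps|A|$ atypical ones. Everything else must be covered by a genuinely spanning argument, and a greedy process that stops once the free sets shrink to linear size cannot provide that.

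Your final sentence points to the route the paper has in mind. The paper gives no argument beyond citing the Blow-up lemma or an unspecified sequential procedure, so here is how it should be made precise:
\begin{itemize}
\item The set $X\subseteq A$ of vertices with fewer than $(d-\eps)|B|$ neighbours in $B$ satisfies $|X|<\eps|A|$; likewise define $Y\subseteq B$.
\item Delete $X$, $Y$ and a few further vertices to balance the sides. This costs $\max\{|X|,|Y|\}<\eps|A|$ vertices per side.
\item The remaining pair is $2\eps$-regular by Fact~\ref{fact:regpairinherit}, and every vertex keeps more than $(d-2\eps)|A|$ neighbours. This is where $d>2\eps$ is used.
\item So the pair is $(2\eps,d-2\eps)$-super-regular, and the Blow-up lemma gives a Hamilton path in it.
\end{itemize}
However, the Blow-up lemma requires $2\eps$ to be small in terms of $d-2\eps$, which does not follow from $d>2\eps$. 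So this route proves the Fact only under $\eps\ll d$, not under its literal hypothesis.

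This is harmless for the paper. In Step~4 the Fact is applied to $3\eps_2$-regular pairs of density at least $d_2/2$ with $\eps_2\ll d_2\ll\rho$, and up to $\rho|U_{(i,j)}|/2$ uncovered vertices are allowed. There even your greedy bound of order $\frac{\eps}{d}|A|$ suffices. To turn your proposal into a proof, either add the assumption $\eps\ll d$ and write out the super-regular reduction above, or state the weaker greedy bound explicitly and check that it is enough for the application.
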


\section{Incorporating low-degree vertices into a path system}\label{sec:incorporatinglowvertices}

Low-degree vertices require special treatment in our construction of a Hamilton cycle, as they cannot be handled directly by the regularity method. Proposition~\ref{prop:lowdegreePaths} produces a path system covering all such vertices in $\G(n,W)$; in the proof of Theorem~\ref{thm:mainHC}, this path system will then be incorporated into the bulk of the Hamilton cycle constructed via the regularity method.

\begin{prop}\label{prop:lowdegreePaths}
Suppose that $W:\Omega^2\to[0,1]$ is a graphon that satisfies the condition of Theorem~\ref{thm:mainHC}\ref{en:mainHC2}. Then for every $\eps\in (0,\frac13)$ there exists $\alpha_0>0$ such that for every $\alpha\in(0,\alpha_0/2)$ and for each $n>n_0(\eps,\alpha)$ with probability at least $1-\eps$ the random graph $G=\G(n,W)$ contains a system $\mathcal{P}$ of vertex-disjoint paths with the following properties:
\begin{enumerate}[label=\alabel]
\item\label{it:prop-lowdeg-atleast3} each path of $\mathcal{P}$ contains at least~3 vertices,
\item \label{it:prop-lowdeg-lowdeg} every vertex of degree less than $\alpha n$ is contained in some path of $\mathcal{P}$,
\item \label{it:prop-lowdeg-ends} the endvertices of each path have degree at least $\alpha n$,
\item \label{it:prop-lowdeg-VPsmall} we have $|V(\cP)|<\alpha\eps n$, and
\item \label{it:prop-lowdeg-fewpaths} we have $|\mathcal{P}|<2/\alpha$.
\end{enumerate}
\end{prop}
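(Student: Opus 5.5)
Let $L$ be the set of vertices of degree less than $\alpha n$; these are the low vertices, and all other vertices are high. The plan is to control $L$ using condition~\ref{en:mainHC2}, show that it is sparse and well spread, and then hang each low vertex between two high neighbours.

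First choose $\alpha_0$ so small that $\mu(\SMALLDEG_W(\beta))\le \eps^2\beta$ for all $\beta\le 2\alpha_0$. By Fact~\ref{fact:sampledeg} with tolerance $\alpha/2$, a.a.s.\ every vertex of type outside $\SMALLDEG_W(2\alpha)$ has degree at least $\frac32\alpha n$. Hence $L$ is contained in the set of vertices with type in $\SMALLDEG_W(2\alpha)$. By Chernoff this set has size at most $2\eps^2\alpha n$ w.h.p., provided $n$ is large; if $\eps^2\alpha n$ is tiny we use Markov instead.

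Next we show that every vertex has degree at least~2 and that low vertices are sparse around each other. The vertices of very small graphon degree, say $\deg_W(x)\le \omega/n$, number $o(1)$ in expectation by~\ref{en:mainHC2}. So a.a.s.\ every vertex has graphon degree at least $\omega/n$ with $\omega\to\infty$ slowly, and then Chernoff (Lemma~\ref{lem:Chernoff}) gives every vertex degree at least~$2$, indeed at least $3$ or any constant.

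We then bound the number of low neighbours of each vertex. Conditional on the types, the probability that two low vertices are adjacent, or that a low vertex has two low neighbours, is controlled as follows. For a vertex $v$ of type $x$, the expected number of neighbours of $v$ in $L$ is at most $n\int_{\SMALLDEG(2\alpha)}W(x,y)\,dy$. I would aim to show that the following holds a.a.s.: every low vertex $v$ has at least two high neighbours, and no high vertex is a neighbour of more than a few low vertices. The key computation is that a low vertex $v$ of degree $d$ has its $d$ neighbours distributed according to $W(x,\cdot)/\deg_W(x)$. The chance that a given neighbour is low is $\int_{\SMALLDEG}W(x,y)/\deg_W(x)$, which need not be small pointwise. \textbf{This is the main obstacle}: low vertices may cluster. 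To handle it I would not insist on disjoint hangings. Instead, take the subgraph $F$ spanned by $L$ and its neighbours and, within each component of $G[L]$ together with its attached high vertices, build a path through all low vertices with both ends high, using degrees at least~2 and a spanning-tree argument. Concretely, take a tree in $G[L\cup N(L)]$ containing all of $L$ whose leaves are high, prune it to a binary tree, and decompose it via Fact~\ref{fact:decomposetree}. That fact yields a path partition whose endpoints are exactly leaves, hence high. Paths consisting of a single leaf are discarded, and short paths are extended by an extra high neighbour to satisfy~\ref{it:prop-lowdeg-atleast3}.

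Finally we verify sizes. $|V(\cP)|\le |L|\cdot(\text{constant})<\alpha\eps n$ follows from $|L|\le 2\eps^2\alpha n$, provided each low vertex contributes $O(1)$ tree vertices, which holds since we only add two high neighbours per low vertex. For the count $|\cP|<2/\alpha$ we need $|L|=O(1)$ paths rather than $O(\eps^2\alpha n)$. I would obtain it by first merging the paths through high vertices: any two high vertices have many common or short connections, so paths can be concatenated via high vertices. Alternatively, this may be deferred, since the number of paths is only needed for later stitching. This concatenation step, which uses that high vertices have $\alpha n$ neighbours while $V(\cP)$ is tiny, is what I would try first to achieve~\ref{it:prop-lowdeg-fewpaths}.
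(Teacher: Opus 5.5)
\textbf{There is a genuine gap, and it sits exactly where you locate the main obstacle: you never show that a path system satisfying (b) and (c) exists.}

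Minimum degree $\ge 2$ (or $\ge k$ for any constant $k$), together with a spanning tree of $G[L\cup N(L)]$ ``pruned to a binary tree'', cannot do this, because such a tree need not exist. Suppose $k$ low vertices of degree exactly $k$ all have the same $k$ neighbours, forming a $K_{k,k}$. Then no path system with (b) and (c) exists. Each low vertex would be interior and use two of these edges, and each of the $k$ neighbours can carry at most two path-edges. So the path-edges there would form a $2$-regular bipartite graph, i.e.\ a union of cycles. The case $k=2$ is two degree-$2$ low vertices sharing both neighbours.

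A binary tree needs every low vertex to have two children that are not used anywhere else, and nothing in your argument guarantees this. Your only quantitative input is the single-scale bound $|L|\le 2\eps^2\alpha n$. It says nothing about how many vertices have degree around, say, $100/\eps$ or $\sqrt n$, compared with that degree.

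\textbf{The missing idea is that low-degree types are sparse \emph{at every scale, relative to their own degree}.} The paper chooses $\alpha_0$ with $\mu(\SMALLDEG_W(\beta))<\eps^2\beta/999$ for all $\beta\le\alpha_0$. It then shows that, with probability at least $1-\eps$, three things hold simultaneously:
\begin{enumerate}[label=(\roman*)]
\item no type lies in $\SMALLDEG_W(\beta^*)$ for some $\beta^*=\Theta(1/(\eps n))$;
\item for \emph{every} $\beta\le\alpha_0$, fewer than $\eps\beta n/20$ types lie in $\SMALLDEG_W(\beta)$, proved by a dyadic union bound over $\beta=2^{-k}\alpha_0$ with the large-deviation Chernoff bound;
\item every vertex $i$ with type in $\SMALLDEG_W(\alpha_0)$ has $\deg_G(i)\ge\deg_W(X_i)n/2$.
\end{enumerate}

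Point (iii) is also where your minimum-degree step breaks. With $\omega\to\infty$ slowly, a union bound over $n$ vertices of a per-vertex failure probability of order $\omega e^{-\omega}$ does not vanish. If instead you take $\omega\approx\ln n$, the claim that no type lies in $\SMALLDEG_W(\omega/n)$ fails in general. You must instead average over the type distribution shell by shell. This gives a geometric series of order $\eps/n$ per vertex.

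With (i)--(iii) in hand, order the vertices $i_1,\dots,i_t$ of type in $\SMALLDEG_W(2\alpha)$ by nondecreasing $\deg_W$. Then
\[
r<\eps\deg_W(X_{i_r})n/20 \quad\text{and}\quad \deg_G(i_r)\ge\deg_W(X_{i_r})n/2\ge 2+3(r-1).
\]
So each $i_r$ can greedily pick two neighbours outside $\{i_1,\dots,i_{r-1}\}$ and outside all previously picked vertices. This produces binary trees whose internal vertices are exactly the $i_r$ and whose leaves have high degree. Fact~\ref{fact:decomposetree} then decomposes them into paths, after discarding single-leaf paths. You never need pointwise control of where the neighbours of a low vertex lie.

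\textbf{On (e):} it is part of the statement and cannot be deferred. Also, ``any two high vertices have many short connections'' is not available at this stage. What works is the pigeonhole form of your last remark. Let $\mathcal F$ be the initial decomposition. Among systems satisfying (a)--(c) with $|V(\cP)|\le|V(\mathcal F)|+|\mathcal F|-|\cP|$, take one with the fewest paths. Then endpoints of distinct paths have disjoint neighbourhoods outside $V(\cP)$; otherwise merge them through a common neighbour, which adds one vertex and removes one path. Each such neighbourhood has size roughly $\alpha n-|V(\cP)|$, which forces $|\cP|<2/\alpha$.
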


\begin{proof}
Suppose that $\eps\in (0,\frac13)$ is given.
We use the condition of Theorem~\ref{thm:mainHC}\ref{en:mainHC2} to find $\alpha_0>0$ so that for every $\beta\in(0,\alpha_0]$ we have 
\begin{equation}\label{eq:Ma}
\mu(\SMALLDEG_W(\beta))< \eps^2\beta/999
\;.
\end{equation}
Let $\alpha\in(0,\alpha_0/2]$ be fixed. Suppose that $n$ is sufficiently large. Let $j^*\in \N$ be such that 
\begin{equation}\label{eq:such}
    2^{-j^*}\alpha_0\in\left(\frac{100}{\eps n},\frac{200}{\eps n}\right]\;.
\end{equation} 
We have $j^*=\Theta(\log n)$. 

Throughout much of the proof, we will work with the random types $\{X_i\}_{i\in[n]}$ from~\ref{G1}.
In the first stage of the proof, we get control on the random degrees of $G$. In the easy Claim~\ref{cl:D} below, we will relate $\deg_G(i)$ to $\deg_W(X_i)$ up to an additive error $0.01\alpha n$. For those $i$ for which $\deg_W(X_i)$ is small, a more precise control is needed. This is done in Claims~\ref{cl:smallvert}, \ref{cl:Ebig}, and~\ref{cl:R}. In the second stage of the proof, we use the gained control on the degrees and deterministically built the path system $\cP$.
\begin{claim}\label{cl:D}
    Let $\mathcal{D}$ be the event that for each $i\in[n]$ we have $\deg_G(i)=\deg_W(X_i)n\pm 0.01\alpha n$. Then we have $\Probability[\mathcal{D}]\ge 1-\frac{\eps}{3}$.
\end{claim}
\begin{claimproof}
    This follows immediately from Fact~\ref{fact:sampledeg}.
\end{claimproof}

\begin{claim}\label{cl:smallvert}
Let $\mathcal{T}$ be the event that $\{X_i:i\in[n]\}\cap \SMALLDEG_W(2^{-j^*}\alpha_0)=\emptyset$. Then we have $\Probability[\mathcal{T}]\ge 1-\frac{\eps}{4.8}$.   
\end{claim}
\begin{claimproof}
Indeed, \[
\Probability[\mathcal{T}]=(1-\mu(\SMALLDEG_W(2^{-j^*}\alpha_0)))^n\geBy{\eqref{eq:Ma},\eqref{eq:such}}(1-\tfrac{200\eps}{999n})^n\ge \exp(-\eps/4.8)\ge 1-\eps/4.8
\;,
\]
as was needed.
\end{claimproof}
Let $\mathcal{E}$ be the event that $\mathcal{T}$ holds and that for every $\beta\in (0,\alpha_0]$ we have $|\{i\in [n]: X_i\in  \SMALLDEG_W(\beta)\}|< \eps \beta n/20$. 

\begin{claim}\label{cl:Ebig}
 We have $\Probability[\mathcal{E}]\ge 1-\frac{\eps}{3}$.   
\end{claim}
\begin{claimproof}
 For each $k\in \{0,\ldots,j^*-1\}$, let $\cA_k$ be the event that $|\{X_i:i\in [n]\}\cap \SMALLDEG_W(2^{-k}\alpha_0)|>2^{-(k+1)}\eps \alpha_0 n/20$. Clearly, $\Probability[\mathcal{E}]\ge \Probability[\mathcal{T}]-\sum_{k=0}^{j^*-1}\Probability[\cA_k]\ge 1-\frac{\eps}{4.8}-\sum_{k=0}^{j^*-1}\Probability[\cA_k]$ by Claim~\ref{cl:smallvert}. 

 Fix $k\in \{0,\ldots,j^*-1\}$. The number of points $X_i$ for which $X_i\in \SMALLDEG_W(2^{-k}\alpha_0)$ is binomial with parameters $n$ and $\mu(\SMALLDEG_W(2^{-k}\alpha_0))\lByRef{eq:Ma} 2^{-k}\eps^2\alpha_0/999$. The Chernoff bound (Lemma~\ref{lem:Chernoff}\ref{it:Chernoff2} with $C=999/(40\varepsilon)$) gives $\Probability[\mathcal{A}_k]\le \exp(-\tfrac{2^{-k}\eps^2\alpha_0}{999}\cdot \frac{999}{40\eps}\ln(\tfrac{999}{40e\eps})\cdot n)\leq\exp(-\tfrac{2^{-k}\eps\alpha_0}{40}\cdot \ln(\tfrac{9}{\eps})\cdot n)$. For $k=j^*-1$ note that $\tfrac{200}{\varepsilon n}\leq 2^{-(j^*-1)}\alpha_0$ by~\eqref{eq:such}, hence the previous bound gives $\Probability[\mathcal{A}_{j^*-1}]\le\exp(-\tfrac{5}{n}\cdot \ln(\tfrac{9}{\eps})\cdot n)\le(\frac{\eps}{9})^5$. Therefore for every $k\in \{0,\ldots,j^*-1\}$ we have
 \[\exp\big(-\tfrac{2^{-k}\eps\alpha_0}{40}\cdot \ln(\tfrac{9}{\eps})\cdot n\big)
 \leq 
 \exp\big(-\tfrac{2^{-(j^*-1)}\eps\alpha_0}{40}\cdot \ln(\tfrac{9}{\eps})\cdot n\big)^{2^{j^*-k-1}}\leq2^{k-j^*}\cdot\tfrac{\varepsilon}{9}\;.\]
 Thus $\sum_{k=0}^{j^*-1}\Probability[\cA_k]\le\sum_{i=1}^\infty 2^{-i}\cdot\frac{\eps}{9}\le \frac{\eps}{8}$ proving the claim.
 \end{claimproof}

For $i\in[n]$, let $\mathcal{R}_i$ be the event that $X_i\in \SMALLDEG_W(\alpha_0)\setminus \SMALLDEG_W(2^{-j^*}\alpha_0)$ and at the same time $\deg_G(i)<\deg_W(X_i)n/2$.
\begin{claim}\label{cl:R}
We have $\Probability[\bigcup_{i\in [n]}\mathcal{R}_i]\le \eps/3$.
\end{claim}
\begin{claimproof}
Observe that for $\mathcal{R}_i$ to occur, we have to have at least one index $j\in [j^*]$ for which $X_i\in \SMALLDEG_W(2^{-(j-1)}\alpha_0)\setminus \SMALLDEG_W(2^{-j}\alpha_0)$ and $\deg_G(i)<\deg_W(X_i)n/2$. Observe that conditioning on $X_i=x$ (subject to $x\in \SMALLDEG_W(2^{-(j-1)}\alpha_0)\setminus \SMALLDEG_W(2^{-j}\alpha_0)$), we have that $\deg_G(i)$ is a binomial random variable with parameters $n-1$ and $\deg_W(X_i)$. Hence, the Chernoff bound (Lemma~\ref{lem:Chernoff}\ref{it:Chernoff1}) tells us that 
\[
\Probability[\deg_G(i)<\deg_W(X_i)n/2\:|\:X_i=x]\le \exp(-2^{-(j-1)}\alpha_0 n/20)
\;,
\]
where we used that $\deg_W(X_i)\ge 2^{-j}\alpha_0$.
Define a function $F:[j^*]\to (0,1]$ by $F(j)=\exp(-2^{-(j-1)}\alpha_0 n/20)$. Thus, for any $i\in [n]$, we have
\[
\Probability[\mathcal{R}_i]\le 
\sum_{j=1}^{j^*}\mu(\SMALLDEG_W(2^{-j}\alpha_0))F(j)
\leByRef{eq:Ma}
\sum_{j=1}^{j^*}\underbrace{\frac{\eps^2\cdot2^{-j}\alpha_0}{999}}_{\mathsf{(T1)}_j}\cdot \underbrace{F(j)}_{\mathsf{(T2)}_j}\;.
\]
Let us look at the terms $\mathsf{(T1)}_j$ and $\mathsf{(T2)}_j$, in the descending order, that is, as $j$ moves from~$j^*$ to~$1$. For each $j$ in this range, we have $\frac{\mathsf{(T1)}_{j-1}}{\mathsf{(T1)}_{j}}=2$. Further, we have $\frac{\mathsf{(T2)}_{j-1}}{\mathsf{(T2)}_{j}}=F(j-1)\le F(j^*-1)=\exp(-2^{-j^*}\alpha_0\cdot 4n/20)\leByRef{eq:such}\exp(-20/\varepsilon)<\frac{1}{9}$. Thus, the sum $\sum_{j=1}^{j^*}\frac{\eps^2\cdot2^{-j}\alpha_0}{999}\cdot F(j)$ is bounded above by a geometric series with initial term 
\[\frac{\eps^2\cdot2^{-j^*}\alpha_0}{999}\cdot F(j^*)
\le \frac{\eps^2\cdot2^{-j^*}\alpha_0}{999}\leByRef{eq:such} \frac{200\eps}{999n}\le\frac{2\eps}{9n}
\]
and common ratio $\frac{1}{9}$. We conclude that $\Probability[\mathcal{R}_i]\le \eps/(4n)$. The union bound gives $\Probability[\bigcup_{i\in [n]}\mathcal{R}_i]\le \eps/3$.
\end{claimproof}

We shall show that if $\mathcal{D}\cap \mathcal{E}\setminus \bigcup_{i\in [n]}\mathcal{R}_i$ holds, then we get the assertions of the propositions. Claims~\ref{cl:D}, \ref{cl:Ebig}, and~\ref{cl:R} tell us that the error probability is sufficiently small. 
The rest of the proof is hence devoted to the construction of the path system $\cP$.

Let $t:=|\{i\in [n]: X_i\in \SMALLDEG_W(2\alpha)\}|$. Let us enumerate the elements $\{i\in [n]:X_i\in \SMALLDEG_W(2\alpha)\}$ as $i_1,i_2,\ldots,i_t$ so that $\deg_W(X_{i_1}),\deg_W(X_{i_2}),\ldots,\deg_W(X_{i_t})$ is nondecreasing.

Inductively for $r=1,\ldots,t$, we will fix an arbitrary 2-vertex set $U_r\subset [n]$ with the property that $U_r\subset N_G(i_r)\setminus \left(\bigcup_{s=1}^{r-1} U_s\cup\{i_1,\ldots,i_{r-1}\}\right)$. Let us argue that this is possible, say at step $r$. It suffices to show that 
\begin{equation}
    \label{eq:ss}
\deg_G(i_r)\ge 2+3(r-1)\;.
\end{equation}
We can use the defining property of $\mathcal{E}$ with $\beta:=\deg_W(X_{i_r})$. We get that $r<\frac{\eps\deg_W(X_{i_r})n}{20}$. By that fact that event $\mathcal{R}_{i_r}$ does not hold, but $\mathcal{T}$ does hold, we have $\deg_G(i_r)\ge \deg_W(X_{i_r})n/2$. Putting these two bounds together, we have established~\eqref{eq:ss}. This allows us to choose $U_r$.

We have just shown that whenever we have $\mathcal{E}\setminus \bigcup_{i\in [n]}\mathcal{R}_i$, we can create a system of vertex-disjoint binary trees that covers all vertices $i_1,\ldots,i_t$ and some additional vertices. Indeed, for each vertex $i_r$ (which is either an internal vertex of a binary tree or the root, depending on whether $i_r\in \bigcup_{s=1}^{r-1}U_s$ or not), the vertices of $U_r$ specify the two descendants of $i_r$. Every vertex outside $\{i_1,\ldots,i_t\}$ is a leaf. We can now turn the system of binary trees into a system of paths $\mathcal{F}$ using Fact~\ref{fact:decomposetree}. Take an arbitrary path in $\mathcal{F}$. All its vertices $i$ except its ends satisfy $X_i\in \SMALLDEG_W(2\alpha)$. At the same time, at least a third of its vertices are internal. Hence,
\begin{equation}\label{eq:Vance}
    |\mathcal{F}|\le t \le  \eps \alpha n/10 \quad\mbox{and}\quad \frac{1}{3}|V(\mathcal{F})|\le t\le \eps \alpha n/10\;,
\end{equation}
where we used the definition of~$\mathcal{E}$ to get an upper-bound on $t$.

The system $\mathcal{F}$ could almost serve as $\mathcal{P}$ from the statement of the proposition; in particular it satisfies~\ref{it:prop-lowdeg-atleast3}, \ref{it:prop-lowdeg-lowdeg}, and~\ref{it:prop-lowdeg-ends}. The reason being that every vertex of degree at most $\alpha n$ in $G$ has by Claim~\ref{cl:D} their type in $\SMALLDEG_W(2\alpha)$ and is hence included as an internal vertex in $\mathcal{F}$. Similarly, since the end vertices have their type not in $\SMALLDEG_W(2\alpha)$ by Claim~\ref{cl:D} they have degree at least $1.99\alpha n\geq\alpha n$ in $G$.

It remains to show how to modify it in order to get~\ref{it:prop-lowdeg-VPsmall} and~\ref{it:prop-lowdeg-fewpaths}.

Among all collections $\cP$ of path systems satisfying properties~\ref{it:prop-lowdeg-atleast3}, \ref{it:prop-lowdeg-lowdeg}, and~\ref{it:prop-lowdeg-ends} of the proposition, and additionally obeying
\begin{equation}\label{eq:pracka}
    |V(\cP)| \leq |V(\mathcal{F})| + |\mathcal{F}| - |\cP| \,,
\end{equation}
we choose $\cP$ that minimizes $|\cP|$. 
Observe that this minimization is over a nonempty solution space, since~$\mathcal{F}$ itself satisfies all the required conditions. We claim the remaining properties~\ref{it:prop-lowdeg-VPsmall} and~\ref{it:prop-lowdeg-fewpaths} are then satisfied as well.

Let us first look at~\ref{it:prop-lowdeg-VPsmall}. By~\eqref{eq:pracka}, we have $|V(\cP)| \leq |V(\mathcal{F})| + |\mathcal{F}|$. The desired bound follows by plugging in~\eqref{eq:Vance}.
    
We now look at~\ref{it:prop-lowdeg-fewpaths}. Let~$U$ be a set of vertices defined by taking one endpoint for each path in~$\cP$.

    Observe for every pair of distinct vertices~$u,v\in U$ we have that~$N_G(u)\setminus V(\cP)$ and $N_G(v)\setminus V(\cP)$ are disjoint. 
    Indeed, otherwise, we could merge two paths from~$\cP$ (which preserves \ref{it:prop-lowdeg-atleast3}, \ref{it:prop-lowdeg-lowdeg}, and~\ref{it:prop-lowdeg-ends}, as well as~\eqref{eq:pracka}), contradicting the minimality of $|\cP|$.
    Then,
 \begin{align*}
    n &\geBy{\text{disjoint}}  \sum_{v\in U} |N_G(v)\setminus V(\cP)| \\
  \JUSTIFY{\text{Prop\ref{prop:lowdegreePaths}\ref{it:prop-lowdeg-ends}, and Prop\ref{prop:lowdegreePaths}\ref{it:prop-lowdeg-VPsmall} together with $\eps<\tfrac13$}}
    &\geBy{\phantom{\text{disjoint}}}  |U| \Big(0.99\alpha n-\alpha n/3\Big)=|\cP| \Big(0.99\alpha n-\alpha n/3\Big)\,.
 \end{align*}
    This yields~$|\cP| \leq \tfrac{2}{\alpha}$ as desired.    
\end{proof}

\section{Typical structure of the cluster graph of $\G(n,W)$}\label{StructureOfTheClusterGraph}
The main part of the Hamilton cycle in our proof of Theorem~\ref{thm:mainHC} is constructed using the Szemerédi regularity lemma. In this section, we present the main structural results to this end. Namely, Proposition~\ref{prop:clustergraph} asserts that cluster graphs such as those coming from regularizations of $\G(n,W)$ satisfy a certain robust version of the `uniquely half-covered' property.

\subsection{Fractional vertex covers in graphons}\label{ssec:fractionalvertexcoversgraphons}

As previously noted, matching theory—when applied to the cluster graph—plays a crucial role in our proof of Theorem~\ref{thm:mainHC}. The applicability of this approach relies on specific conditions imposed on the graphon $W$, most notably the condition of Theorem~\ref{thm:mainHC}\ref{en:mainHC3}. Consequently, we require an extension of matching theory to the graphon setting. Such a framework was developed in~\cite{MR4186624,DolezalHladky:MatchingPolytons}. 

It turns out that between the two dual aspects of matching theory -- the "matching" facet and the "vertex cover" facet -- the latter behaves more cleanly for graphons. We shall restrict our review to this facet. Notably, we will not require a full "LP-type duality" between vertex covers and matchings at the level of the graphon $W$; instead, this transition is handled at the level of the finite cluster graph $R$ of $\G(n,W)$.

Given a graphon $W:\Omega^2\to [0,1]$, we say a function $f:\Omega\to[0,1]$ is a \emph{fractional vertex cover} of $W$ if, for almost every $(x,y)\in \Omega^2$, we have $f(x)+f(y)\ge 1$ whenever $W(x,y)>0$. We say that $f$ is \emph{half-integral} if its range is restricted to $\{0, \frac{1}{2}, 1\}$. 
The observation made in~\ref{corr1} regarding the correspondence between graph peninsulae and half-integral vertex covers extends naturally to graphons. We record this for later reference.
\begin{fact}\label{fct:halfintvcispeninsula}
A graphon peninsula corresponds to a half-integral vertex cover $f$ with $\| f\|_1=\frac{1}{2}$ that is not the constant-$\frac{1}{2}$ function.
\end{fact}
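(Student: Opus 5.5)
The plan is to give an explicit two-way translation between peninsulae $(a,A,B)$ as in Definition~\ref{defi:peninsula}\ref{en:defiPen1} and half-integral vertex covers $f$ of $W$ with $\|f\|_1=\frac12$ and $f\not\equiv\frac12$. This mirrors the finite correspondence~\ref{corr1}. Throughout, equalities of sets and functions are understood up to null sets.

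\emph{From a peninsula to a cover.} Let $A,B$ witness a peninsula with parameter $a$, and put $C:=\Omega\setminus(A\cup B)$, so that $\mu(C)=1-a-(1-2a)=a$. Define $f:=0$ on $A$, $f:=\frac12$ on $B$, and $f:=1$ on $C$. We check that $f$ is a fractional vertex cover. The only pairs with $f(x)+f(y)<1$ are those in $A\times(A\cup B)$ or $(A\cup B)\times A$. On these sets $W=0$ almost everywhere, by the peninsula assumption and the symmetry of $W$. The weight is
\[
\|f\|_1=\tfrac12(1-2a)+a=\tfrac12 .
\]
Since $\mu(A)=a>0$ and $f=0$ on $A$, the function $f$ is not a.e.\ constant-$\frac12$.

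\emph{From a cover to a peninsula.} Let $f$ be a half-integral cover with $\|f\|_1=\frac12$ and $f\not\equiv\frac12$. Set $A:=f^{-1}(0)$, $B:=f^{-1}(\frac12)$, $C:=f^{-1}(1)$, and $a:=\mu(A)$. From
\[
\tfrac12=\|f\|_1=\tfrac12\mu(B)+\mu(C)
\]
and $\mu(A)+\mu(B)+\mu(C)=1$, we get $\mu(C)=\mu(A)=a$ and $\mu(B)=1-2a$.

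Next we show $a>0$. If $a=0$, then $\mu(C)=0$, so $f=\frac12$ almost everywhere, which contradicts $f\not\equiv\frac12$. Also $a\le\frac12$ because $\mu(B)\ge0$.

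Finally, the cover property says that almost everywhere on $A\times(A\cup B)$ we have $f(x)+f(y)\le\frac12<1$. This forces $W=0$ almost everywhere there. So $(a,A,B)$ is a peninsula.

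The only delicate point is the ``almost everywhere'' bookkeeping, namely reading ``constant-0 on $A\times(A\cup B)$'' in the a.e.\ sense. Everything else is a measure computation.
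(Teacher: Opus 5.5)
Your proof is correct and is the translation the paper has in mind. The paper records this Fact without a separate proof, as the graphon analogue of the finite correspondence~\ref{corr1} via $A=f^{-1}(0)$, $B=f^{-1}(\tfrac12)$, and your write-up supplies the details it leaves implicit: the computation $\mu(f^{-1}(1))=\mu(f^{-1}(0))$ from $\|f\|_1=\tfrac12$, the deduction $a>0$ from $f\not\equiv\tfrac12$, and the almost-everywhere handling of the cover condition, including symmetry of $W$.
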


The main result of this section shows that if a sequence of graphons converges in the cut norm, any sequence of associated half-integral vertex covers yields a half-integral vertex cover of the limit graphon with favourable properties. 

\begin{prop}\label{prop:weakstarlimit}
Suppose $W_1, W_2, \ldots$ is a sequence of graphons on $\Omega$ converging to a graphon $W$ in the cut norm. Let $f_1,f_2,\ldots: \Omega \to \{0, \frac12, 1\}$ be a sequence, such that $f_n$ is a half-integral vertex cover of $W_n$. Then there exists a half-integral vertex cover $f: \Omega \to \{0, \frac12, 1\}$ of $W$ such that:
\begin{enumerate}[label=\alabel]
    \item\label{H1ttt} $\mu(f^{-1}(0)) \ge \liminf_{n \to \infty} \mu(f_n^{-1}(0))$, and
    \item\label{H2ttt} $\|f\|_1 \le \liminf_{n \to \infty} \|f_n\|_1$.
\end{enumerate}
\end{prop}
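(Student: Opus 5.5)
Pass to a subsequence and take weak* limits of indicator functions, then round the limit to a half-integral cover.

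First, pick a subsequence along which $\mu(f_n^{-1}(0))$ converges to $\liminf_n\mu(f_n^{-1}(0))$. Then refine it further so that $\|f_n\|_1$ converges to some limit $\ell\ge$ the corresponding $\liminf$. Doing both monotonically is not possible in general. The two liminfs may be attained along different subsequences, but this causes no trouble. The rounding below yields an $f$ whose zero set has measure at least the limit of $\mu(f_n^{-1}(0))$ along the chosen subsequence, and whose norm is at most the limit of $\|f_n\|_1$ along it. Since every subsequential limit is at least the liminf, property \ref{H2ttt} needs care. I would instead pick the subsequence realizing $\liminf\|f_n\|_1$. Then I would argue \ref{H1ttt} via a construction in which the zero set only grows, so that only one subsequence is needed. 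Along this subsequence I apply Banach--Alaoglu to the indicators $a_n:=\mathbf 1_{f_n^{-1}(0)}$, $b_n:=\mathbf 1_{f_n^{-1}(1/2)}$, $c_n:=\mathbf 1_{f_n^{-1}(1)}$. This gives weak* limits $a,b,c$ with values in $[0,1]$ (Fact~\ref{fact:weakstarbounded}) and $a+b+c=1$ a.e. Note that $\|f_n\|_1=\int(\tfrac12 b_n+c_n)$ and $\mu(f_n^{-1}(0))=\int a_n$ pass to the limit.

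The key step is the cover condition in the limit. For sets $S,T$, the quantity $\int_{S\times T}W_n\,a_n(x)a_n(y)$ should be $0$, since $W_n=0$ a.e.\ on $f_n^{-1}(0)^2$. Similarly $\int W_n\,a_n(x)b_n(y)=0$. I want to conclude that $\int W\,a(x)a(y)=0$ and $\int W\,a(x)b(y)=0$. The main obstacle is passing to the limit in a product of a cut-norm convergent kernel and weak* convergent functions. For this I would write $\int W_n a_n\otimes a_n-\int W a\otimes a$ as the sum of $\int (W_n-W)a_n\otimes a_n$ and $\int W(a_n\otimes a_n-a\otimes a)$. The first term is bounded by $4\cutn{W_n-W}$, since the cut norm controls integrals against products of $[0,1]$-valued functions. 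For the second term, weak* convergence of $a_n$ alone does not give convergence of $\int W a_n\otimes a_n$. Instead, I use that the operator $T_W$ is compact on $L^2$ (Hilbert--Schmidt). Hence $T_W a_n\to T_W a$ in norm, and $\int W a_n\otimes a_n=\langle a_n,T_Wa_n\rangle\to\langle a,T_Wa\rangle$. The same argument handles the $a\otimes b$ terms, and the $a$ versus $c$ terms need nothing.

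Finally, I define the rounding $f:=0$ on $\{a\ge\tfrac12\}$ (taking $a>0$ is too big). This is where the real difficulty lies, because $\int Waa=0$ only says $W=0$ a.e.\ on $\supp(a)^2$ and on $\supp(a)\times\supp(b)$. So I would set $A:=\supp_\delta$-style sets and choose $A:=\{a>0\}$ and $B:=\{a=0,\ b>0\}$, with $f=0$ on $A$, $\tfrac12$ on $B$, and $1$ elsewhere. Then $f$ is a half-integral cover of $W$. Moreover $\mu(A)\ge\int a=\lim\mu(f_n^{-1}(0))$, which gives \ref{H1ttt} (using the subsequence realizing the $\liminf$ of zero-measures). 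The norm bound fails for this naive rounding, because putting weight $0$ on all of $\{a>0\}$ is only cheaper. Indeed, $\|f\|_1=\tfrac12\mu(B)+\mu(C)$, and on $C=\{a=b=0\}$ we have $c=1$, while on $B$ we have $b+c=1$ and $\tfrac12\le\tfrac12b+c$. So $\|f\|_1\le\int(\tfrac12b+c)=\lim\|f_n\|_1$. To get both \ref{H1ttt} and \ref{H2ttt} with the liminfs, I take the subsequence realizing $\liminf\|f_n\|_1$ and observe that the bound $\mu(A)\ge\int a$ holds for every subsequential limit. Thus $\mu(A)\ge\lim\int a_{n_k}\ge\liminf$, which is enough.
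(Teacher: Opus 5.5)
Your proof is correct. Its skeleton is the paper's: pass to a subsequence realizing $\liminf\|f_n\|_1$ and extract weak* limits of the level-set indicators. Then round via supports, with $0$ on $\{a>0\}$ and $\frac12$ on $\{a=0<b\}$. This is exactly the paper's $\supp(b)\setminus\supp(a)$, since the paper's $b$ is the limit of $\mathbf 1_{f_n^{-1}(0)\cup f_n^{-1}(1/2)}$, i.e.\ your $a+b$. Part (a) then follows from $\mu(\{a>0\})\ge\int a$ along the subsequence. Your pointwise bound $f\le\frac12 b+c$ is the integrand-level form of the paper's two inequalities for $\|f\|_1$.

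The genuine difference is the key limiting step.

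\textbf{The paper's route.} It proves Claim~\ref{cl:doublyindependent} by contradiction. It finds a rectangle $X\times Y\subset\supp_\delta(a)\times\supp_\delta(b)$ on which $W>c$ except on a small fraction. It then plays the lower bound from weak* convergence against $\int_{A_n\times B_n}W=o(1)$, which comes from the cut norm. This uses only elementary measure theory and $W\ge 0$. However, it needs a rectangle-density step that the paper does not justify; it amounts to approximating a positive-measure subset of $\Omega^2$ by finite unions of rectangles.

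\textbf{Your route.} You prove the continuity statement $\int W_n\,a_n\otimes b_n\to\int W\,a\otimes b$ directly. You split off a cut-norm error and handle $\int W\,(a_n\otimes b_n-a\otimes b)$ by compactness of the Hilbert--Schmidt operator $T_W$, after noting that weak* convergence of uniformly bounded functions gives weak convergence in $L^2$. This yields $\int W\,a\otimes a=\int W\,a\otimes b=0$ immediately, with no choice of $\delta$, $c$ or rectangles. The price is a standard operator-theoretic fact. Even that could be avoided by checking directly that $a_n\otimes b_n\to a\otimes b$ weakly in $L^2(\Omega^2)$, testing on rectangles and using density. Your version is the more robust, reusable statement.

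\textbf{Write-up.} Delete the false starts: the rounding on $\{a\ge\frac12\}$ with the remark that $\{a>0\}$ is ``too big'', and the sentence asserting that the norm bound fails for $\{a>0\}$. You immediately and correctly show that the bound holds, and the argument you actually complete uses $A=\{a>0\}$. Likewise, the initial discussion of realizing both liminfs on one subsequence is unnecessary. Observing that every subsequential limit of $\mu(f_n^{-1}(0))$ is at least its $\liminf$ settles (a) in one line.
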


\begin{proof}
The proof follows the spirit of Lemma~3 in \cite{DolezalHladky:MatchingPolytons} and Lemma~2.3 in \cite{MR3879960}, and uses weak* convergence as we introduced in Section~\ref{ssec:FunctionalAnalysis}. We begin with a key claim regarding the interaction of weak* limits and the graphon edge weights.

\begin{claim}\label{cl:doublyindependent}
Let $A_n, B_n \subset \Omega$ be sets such that $\int_{A_n \times B_n} W_n = 0$ for each $n$. Suppose $\mathbf{1}_{A_n} \xrightarrow{w^*} a$ and $\mathbf{1}_{B_n} \xrightarrow{w^*} b$. Then $\int_{\supp(a) \times \supp(b)} W = 0$.
\end{claim}

\begin{claimproof}
It suffices to show $\int_{\supp_\delta(a) \times \supp_\delta(b)} W = 0$ for every $\delta > 0$. Suppose for contradiction that this fails for some $\delta > 0$. Then there exist sets $X \subset \supp_\delta(a)$, $Y \subset \supp_\delta(b)$ and $c > 0$ such that 
\begin{equation}\label{eq:MiJa}
\mu^2\left(\{(x,y) \in X \times Y : W(x,y) \le c\}\right) < \frac{\delta^2 \mu(X)\mu(Y)}{5} .
\end{equation}
By the definition of $\supp_\delta$, we have $\int_X a \diff\mu \ge \delta \mu(X)$ and $\int_Y b \diff\mu \ge \delta \mu(Y)$. Due to weak* convergence, for sufficiently large $n$, we have $\mu(X \cap A_n) \ge \frac{\delta}{2}\mu(X)$ and $\mu(Y \cap B_n) \ge \frac{\delta}{2}\mu(Y)$. Then \eqref{eq:MiJa} implies
\begin{equation}\label{eq:LB}
\int_{(X \cap A_n) \times (Y \cap B_n)} W(x,y) \ge \left( \frac{\delta^2}{4} - \frac{\delta^2}{5} \right) c \mu(X)\mu(Y) = \frac{\delta^2 c \mu(X)\mu(Y)}{20} .
\end{equation}
However, since $W_n \to W$ in cut norm and $\int_{A_n \times B_n} W_n = 0$,
\begin{equation}\label{eq:UB}
\int_{(X \cap A_n) \times (Y \cap B_n)} W \le \int_{A_n \times B_n} W = \int_{A_n \times B_n} W_n + o(1) = o(1) .
\end{equation}
The contradiction between \eqref{eq:LB} and \eqref{eq:UB} proves the claim.
\end{claimproof}

By the Banach--Alaoglu Theorem, we can pass to a subsequence such that $\lim_n \|f_n\|_1 = \liminf_n \|f_n\|_1$ and the sequences $\left(\mathbf{1}_{f_n^{-1}(0)}\right)_n$, $\left(\mathbf{1}_{f_n^{-1}(1/2)}\right)_n$, and $\left(\mathbf{1}_{f_n^{-1}(1)}\right)_n$ converge weak*. Since $f_n$ is a half-integral vertex cover, we have $W_n(x,y) = 0$ for almost all $(x,y) \in f_n^{-1}(0) \times (f_n^{-1}(0) \cup f_n^{-1}(1/2))$. Applying Claim~\ref{cl:doublyindependent} to $A_n := f_n^{-1}(0)$ and $B_n := f_n^{-1}(0) \cup f_n^{-1}(1/2)$ and their weak* limits $a$ and $b$, we define $f: \Omega \to \{0, \frac12, 1\}$ as
\[ f(x) = \begin{cases} 0 & x \in \supp(a) \\ \frac{1}{2} & x \in \supp(b) \setminus \supp(a) \\ 1 & x \in \Omega \setminus \supp(b)\;. \end{cases} \]
By Claim~\ref{cl:doublyindependent}, $f$ is a half-integral vertex cover of $W$. Finally, we verify the asserted properties. We have
\begin{equation}
\label{eq:rme}
\mu(f^{-1}(0)) = \mu(\supp(a)) = \int_{\supp(a)} 1 \diff\mu(x) \geBy{Fact~\ref{fact:weakstarbounded}}\int_{\supp(a)} a(x) \diff\mu(x)=\int_{\Omega} a(x) \diff\mu(x)= \lim_{n \to \infty} \mu(f_n^{-1}(0)) ,
\end{equation}
which establishes~\ref{H1ttt}, and is also one of two useful inequalities towards~\ref{H2ttt}. The other inequality is obtained by repeating~\eqref{eq:rme} for $\mu(f^{-1}(0) \cup f^{-1}(1/2))$ in place of $\mu(f^{-1}(0))$. We obtain $\mu(f^{-1}(0) \cup f^{-1}(1/2)) \ge \lim_n (\mu(f_n^{-1}(0)) + \mu(f_n^{-1}(1/2)))$. These two inequalities give
\begin{align*}
\|f\|_1 &= 1 - \frac{\mu(f^{-1}(0) \cup f^{-1}(1/2))}{2} - \frac{\mu(f^{-1}(0))}{2} \\
&\le 1 - \frac{1}{2} \lim_{n \to \infty} \left( \mu(f_n^{-1}(0)) + \mu(f_n^{-1}(1/2)) \right) - \frac{1}{2} \lim_{n \to \infty} \mu(f_n^{-1}(0)) = \lim_{n \to \infty} \|f_n\|_1 ,
\end{align*}
as was needed for~\ref{H2ttt}.
\end{proof}

\subsection{Fractional matchings in the cluster graph}
Recall, that given $\delta>0$ and a graphon $W$ we write $\SMALLDEG_{W}(\delta):=\{x\in \Omega\mid \deg_W(x)<\delta\}$. Analogously, we write for a weighted graph $(G,w)$ that $\SMALLDEG_{G}(\delta):=\{v\in V(G)\mid \deg_G(v)<\delta v(G)\}$, where for $v\in V(G)$ we set $\deg_G(v):=\sum_{u\in V(G)}w(uv)$. Recall also that in Section~\ref{ssec:graphonpreliminaries} we introduced the notion of the cut distance between a weighted graph and a graphon.

The next proposition serves as the bridge that transfers conditions~\ref{en:mainHC1}, \ref{en:mainHC2}, and \ref{en:mainHC3} of Theorem~\ref{thm:mainHC} to structural properties of a regularization $R$ of $\G(n,W)$. More precisely, it states that after deleting vertices of low degree from $R$, the resulting graph is connected and uniquely half-covered. Moreover, these properties persist even after the adversarial deletion of an additional small set of vertices. 

\begin{prop}\label{prop:clustergraph}
Suppose that $W:\Omega^2\to[0,1]$ is a graphon that satisfies conditions~\ref{en:mainHC1}, \ref{en:mainHC2}, and \ref{en:mainHC3} of Theorem~\ref{thm:mainHC}.
Then for every $C>1$ and $\gamma\in(0,\tfrac{1}{20C})$ there exist $\eps,\alpha_0\in(0,1]$ such that for every weighted graph $R$ of order $n$ whose cut distance from $W$ is less than $\eps$ the following holds for $R^*:=R-\SMALLDEG_R(\alpha_0)$ and any set $A\subseteq V(R^*)$ with $|A|\leq C\gamma \alpha_0 n$:
\begin{enumerate}[label=(\alph*)]
    \item\label{en:size} we have $|V(R)\setminus V(R^*)|=|\SMALLDEG_{R}(\alpha_0)|\le \gamma\alpha_0 n$,
    \item\label{en:halfcov} $R^*-A$ is uniquely half-covered, and 
    \item\label{en:connected}~$R^*-A$ is connected.
\end{enumerate}
\end{prop}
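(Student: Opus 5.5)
The plan is to prove the three parts separately. We fix $\alpha_0$ first, using condition~\ref{en:mainHC2}, and then choose $\eps\ll\alpha_0$ via compactness arguments. Throughout, $R$ is identified with a graphon representation $U$ with $\cutn{U-W}<\eps$, so that vertex sets of $R$ correspond to unions of the parts $\Omega_v$, and $|S|/n=\mu(\bigcup_{v\in S}\Omega_v)$. Write $D:=\SMALLDEG_R(\alpha_0)\cup A$ for the deleted set. Once~\ref{en:size} is proved, $|D|\le(1+C)\gamma\alpha_0 n\le \tfrac{1}{10}\alpha_0 n$ by the assumption $\gamma<\frac{1}{20C}$.

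For~\ref{en:size}, I would choose $\alpha_0$ so that $\mu(\SMALLDEG_W(2\alpha_0))\le\gamma\alpha_0/4$, which condition~\ref{en:mainHC2} allows. Let $S\subset\Omega$ be the union of the parts $\Omega_v$ with $v\in\SMALLDEG_R(\alpha_0)$. Then $\int_{S\times\Omega}U\le\alpha_0\mu(S)$, and the cut norm bound gives $\int_{S\times\Omega}W\le\alpha_0\mu(S)+\eps$. On the other hand, points of $S\setminus\SMALLDEG_W(2\alpha_0)$ contribute at least $2\alpha_0$ each. Together these give $\mu(S)\le 2\mu(\SMALLDEG_W(2\alpha_0))+\eps/\alpha_0\le\gamma\alpha_0$ once $\eps\le\gamma\alpha_0^2/2$.

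For~\ref{en:halfcov}, I would argue by contradiction and compactness. Suppose the claim fails for $\eps=1/k$ for every $k$. Then there are $R_k$, $A_k$ and a non-constant half-integral cover $f_k$ of $R_k^*-A_k$ of weight at most $v(R^*_k-A_k)/2$, with zero set $Z_k\neq\emptyset$ and one-set $O_k$. The weight bound gives $|Z_k|\ge|O_k|$. A vertex of $Z_k$ has weighted degree at least $\alpha_0 n$ in $R_k$, and all of its neighbours lie in $O_k\cup D_k$. Hence $|O_k|\ge \alpha_0 n-|D_k|\ge 0.9\alpha_0 n$, and so $|Z_k|\ge 0.9\alpha_0 n$ as well. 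Next lift $f_k$ to a half-integral cover $g_k$ of $U_k$ by giving value $1$ to the parts of deleted vertices. Then $\|g_k\|_1\le\frac12+|D_k|/n$ and $\mu(g_k^{-1}(0))\ge 0.9\alpha_0$. Proposition~\ref{prop:weakstarlimit} now yields a half-integral cover $g$ of $W$ with zero set of measure $z\ge0.9\alpha_0$ and weight at most $\frac12+2C\gamma\alpha_0$.

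The final step is to convert such a $g$ into a peninsula. Let $z,h,o$ be the measures of $g^{-1}(0)$, $g^{-1}(\frac12)$ and $g^{-1}(1)$. If $o\le z$, set $a:=(z+o)/2$ and take $A'\subset g^{-1}(0)$ of measure $a$. Put $g^{-1}(1)$ together with part of $g^{-1}(0)\setminus A'$ into a set of measure $a$, and let $B$ be the rest. Since $W$ vanishes on $g^{-1}(0)\times(g^{-1}(0)\cup g^{-1}(\frac12))$, this is a peninsula, contradicting~\ref{en:mainHC3}.

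I expect the main obstacle to be the slack: a priori we only know $o\le z+4C\gamma\alpha_0$. I would try to remove it by a more careful lift. Deleted vertices with no edge to $Z_k$ can receive value $\frac12$, and one should account for the weight that $D_k$ carries, since $|D_k|\le\frac1{10}\alpha_0n$ is small compared with $|Z_k|,|O_k|\ge0.9\alpha_0 n$. Alternatively, one can prove a robust no-peninsula statement for $W$ by compactness, taking a constant sequence in Proposition~\ref{prop:weakstarlimit} and using that $\gamma<\frac1{20C}$ makes the excess a small fixed fraction of $z$. Getting these quantifiers consistent is the delicate point.

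For~\ref{en:connected}, every vertex of $R^*-A$ has weighted degree at least $\alpha_0n-|D|\ge0.9\alpha_0 n$ inside $R^*-A$. Since weights are at most $1$, every component has at least $0.9\alpha_0 n$ vertices. If $R^*-A$ were disconnected, some component $X$ would satisfy $|X|\in[0.9\alpha_0 n,\frac12 n]$. By Lemma~\ref{lem:delete}, $R^*-A$ has a representation within cut distance $\eps+2(1+C)\gamma\alpha_0$ of $W$, up to the renormalisation of $n$. Proposition~\ref{prop:connectedgraphon} with $\zeta=0.8\alpha_0$ would then give an edge leaving $X$, a contradiction. Here the same quantifier issue appears: $\kappa(\zeta)$ must dominate the deletion error. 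If that fails, I would instead apply Proposition~\ref{prop:connectedgraphon} to $R$ itself with the set $X$ and bound the edge weight from $X$ to $D$ by a cut norm estimate, showing that the edges guaranteed out of $X$ cannot all go to the small set $D$.
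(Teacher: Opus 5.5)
There is a genuine gap, and it is the quantifier problem you flag yourself. It cannot be fixed by a better lift or better cut-norm estimates. For an $\alpha_0$ chosen from (II) alone, parts (b) and (c) can simply be false. Your lower bounds ($|Z_k|\ge 0.9\alpha_0 n$, and components of size at least $0.9\alpha_0 n$) scale with $\alpha_0$ exactly like the deletion error $|D|\le (1+C)\gamma\alpha_0 n$. Letting only $\eps\to0$ therefore never separates them.

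Here is a concrete failure of (b). Let $\Omega=Z\sqcup O\sqcup H$ with measures $a$, $a+\tau$, $1-2a-\tau$. Put $W=0$ on $Z\times(Z\cup H)$ and on $(Z\cup H)\times Z$, and $W=1$ elsewhere.
\begin{itemize}
\item This $W$ is connected and has minimum degree $a+\tau$.
\item It has no peninsula: a zero set of a half-integral cover lies in $Z$ and forces the value $1$ on $O$, so the weight is at least $\frac12+\frac\tau2$.
\item Your rule admits $\alpha_0=a/4$, since $\SMALLDEG_W(2\alpha_0)=\emptyset$.
\item Suppose $\tau\le C\gamma a/4$. Take $R$ a large blow-up of $W$ and $A$ a set of $\tau n$ vertices of type $O$. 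Then $0$ on $Z$, $1$ on $O\setminus A$, $\frac12$ on $H$ is a non-constant half-integral cover of $R^*-A$ of weight exactly $v(R^*-A)/2$, for every $\eps$.
\end{itemize}

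Similarly for (c), take two cliques $P,Q$ joined only through a set $M$ of measure $m\le C\gamma\alpha_0$: $W=1$ on $(P\cup M)^2\cup(Q\cup M)^2$ and $W=0$ on $P\times Q$. This satisfies (I)--(III), yet deleting $M$ disconnects $R^*-A$. Every edge leaving $P$ does go into $D$, so your fallback for (c) fails too. Finally, the bound $\gamma<\frac1{20C}$ cannot make the excess a small fraction of $z$. The robustness of $W$ (the $\tau$ above) is unrelated to $\gamma$ and $C$, so $\alpha_0$ must be chosen small in terms of $W$ itself.

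The missing idea is to apply (II) at all scales, which gives lower bounds independent of $\alpha_0$. This is what the paper does.
\begin{itemize}
\item Run your computation for (a) for every $\delta\in[\alpha_0/2,\beta_0]$ with $\eps\le\gamma\alpha_0^2/8$. This gives $|\SMALLDEG_R(\delta)|\le\gamma\delta n$.
\item After deleting $D$, this yields $|\SMALLDEG_{R^*-A}(\beta)|\le\beta n'/10$ for all $\beta<\beta_0/2$, where $n'=v(R^*-A)$.
\item Let $\Delta$ be the largest degree in $R^*-A$ of a vertex in the zero set $Z$ of a bad cover (respectively, in a smallest component $C_1$). Then $|Z|\ge|O|\ge\Delta$ (respectively, $|C_1|\ge\Delta$).
\item On the other hand, $Z$ (respectively, $C_1$) lies in $\SMALLDEG_{R^*-A}(\beta)$ for every $\beta>\Delta/n'$. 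So $\Delta<\beta_0n'/2$ would force $|Z|\le\Delta/10$, a contradiction.
\item Hence both sets have size at least $\beta_0 n'/2$, where $\beta_0$ depends only on $W$ and $\gamma$. This is what makes the quantifiers consistent.
\end{itemize}

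For (c), take $\kappa=\kappa(\beta_0/10)$ from Proposition~\ref{prop:connectedgraphon}, and then choose $\alpha_0$ with $4C\alpha_0<\kappa$. For (b), run the compactness argument with $\alpha_0=\delta_i\to0$ and $\eps_i=\gamma\delta_i^2/8$ simultaneously, as the paper does. The slack $O(C\gamma\delta_i)$ then vanishes, while the zero sets keep measure bounded below by a constant multiple of $\beta_0$. So the weak* limit is a cover of weight at most $\frac12$ with a non-null zero set.

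Your proof of (a) is correct. So are your lift to a cover of the full representation $U_k$ and your conversion of a cover with $o\le z$ into a peninsula; all three fit into this corrected scheme.
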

In our proof of Theorem~\ref{thm:mainHC}, we apply Proposition~\ref{prop:clustergraph} to two regularizations of $\G(n,W)$, each serving a different but complementary role.
The first regularization (denoted in Section~\ref{sec:proof} by $R$, with error parameter $\eps_1$) is used to produce many odd cycles, which, via Lemma~\ref{lem:abs}, yield an absorbing path. Here, the only used consequence of Proposition~\ref{prop:clustergraph}\ref{en:halfcov} is that $R^*-A$ is non-bipartite (Lemma~\ref{lem:OurCombOpt}\ref{en:OurCombOptNonBip}).
The second regularization (denoted by $R_2$, with error parameter $\eps_2\ll\eps_1$) is used to construct an almost perfect fractional matching in $R_2$, which is then exploited via a Blow-up-lemma-type argument to embed paths forming the bulk of the Hamilton cycle. In this step, non-bipartiteness is irrelevant, while the existence of a (near-)perfect fractional matching is essential (Lemma~\ref{lem:OurCombOpt}\ref{en:OurCombOptPerfect}).

\begin{proof}[Proof of Proposition~\ref{prop:clustergraph}]
Throughout, we refer to conditions \ref{en:mainHC1}, \ref{en:mainHC2}, and \ref{en:mainHC3} of Theorem~\ref{thm:mainHC} simply as \ref{en:mainHC1}, \ref{en:mainHC2}, and \ref{en:mainHC3}. Also, we shall work with a sequence of weighted graphs $R_i$ ($i=1,2,\ldots$) and their suitably defined spanning subgraphs $R_i^*$ and $R_i'$. We view $R_i^*$ and $R_i'$ also as weighted, with the weights inherited from $R_i$. In particular, the notion of degree and the derived notion of the set $\SMALLDEG_{G}(\delta)$ always refers to this weighted setting.

Suppose that the assertion of the theorem does not hold for $W$ and constants $C>1$ and $\gamma\in(0,\tfrac{1}{20C})$. 
By~\ref{en:mainHC2} we may choose $\beta_0>0$ such that
\begin{equation}\label{eq:shring}
\mu(\SMALLDEG_{W}(2\delta))<\gamma\delta/2\textrm{ for every }\delta<\beta_0\;.
\end{equation}
Let $\kappa$ be given by Proposition~\ref{prop:connectedgraphon} applied on $W$ with $\zeta:=\beta_0/10$.
Choose an arbitrary sequence $\delta_1,\delta_2,\ldots>0$ such that $\lim_{i\rightarrow\infty}\delta_i=0$. For each $i\in \N$, set
\begin{equation}\label{eq:cutdistdeltai}
\eps_i:=\tfrac{\gamma\delta_i^2}{8}\;.
\end{equation}
By assumption, for each $i\in \N$, there exists a weighted graph $R_i$, say of order $n_i$, with a graphon representation $W_{R_i}$ such that
\begin{equation}\label{eq:cutnormdist}
\cutn{W-W_{R_i}}<\eps_i
\end{equation}
and such that $R_i^*:=R_i-\SMALLDEG_{R_i}(\delta_i)$ violates at least one of the properties~\ref{en:size}-\ref{en:connected} where $\delta_i$ plays the role of $\alpha_0$.  
Choose $i_0\in\mathbb N$ such that
\begin{equation}\label{eq:i0}
4C\delta_i<\kappa\textrm{ and }\delta_i<\gamma,\beta_0\textrm{ for every }i\geq i_0\;.
\end{equation}
We make the following observation.
\begin{claim}\label{clm:vanashRi}
    For every $i\geq i_0$ and $\delta\in[\delta_i/2,\beta_0]$ we have $|\SMALLDEG_{R_i}(\delta)|<\gamma\delta n_i$. 
\end{claim}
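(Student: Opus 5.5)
We compare the degree profile of $R_i$ with that of $W$, using the cut-norm closeness \eqref{eq:cutnormdist} and the smallness \eqref{eq:shring} of $\SMALLDEG_W(2\delta)$. Fix $i\ge i_0$ and $\delta\in[\delta_i/2,\beta_0]$. Let $S:=\SMALLDEG_{R_i}(\delta)$ and let $\Omega_S\subset\Omega$ be the union of the parts $\Omega_v$, $v\in S$, in the partition underlying $W_{R_i}$. Then $\mu(\Omega_S)=|S|/n_i$, and every $x\in\Omega_S$ has $\deg_{W_{R_i}}(x)=\deg_{R_i}(v)/n_i<\delta$. The plan is to show that $\Omega_S$ is essentially contained in $\SMALLDEG_W(2\delta)$, up to a set whose measure the cut norm controls.

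The key step is the following. Put $Z:=\Omega_S\setminus\SMALLDEG_W(2\delta)$. Every $x\in Z$ satisfies $\deg_W(x)\ge 2\delta$ and $\deg_{W_{R_i}}(x)<\delta$. Integrating over $Z\times\Omega$ gives
\[
\int_{Z\times\Omega}(W-W_{R_i})\ \ge\ \delta\,\mu(Z)\;.
\]
The left-hand side is at most $\cutn{W-W_{R_i}}<\eps_i=\gamma\delta_i^2/8$, so $\mu(Z)<\gamma\delta_i^2/(8\delta)$. Using $\delta\ge\delta_i/2$ and $\delta_i<1$, this is at most $\gamma\delta_i/4\le\gamma\delta/2$. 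Measurability of $Z$ is not an issue, since $\SMALLDEG_W(2\delta)$ is measurable and $\Omega_S$ is a finite union of parts.

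To conclude, combine the two bounds. Since $\delta\le\beta_0$, condition \eqref{eq:shring} applies (if $\delta=\beta_0$ exactly, shrink $\beta_0$ slightly at the outset, or read \eqref{eq:shring} as $\delta\le\beta_0$). It gives $\mu(\Omega_S\cap\SMALLDEG_W(2\delta))\le\mu(\SMALLDEG_W(2\delta))<\gamma\delta/2$. Adding the bound on $\mu(Z)$,
\[
|S|/n_i=\mu(\Omega_S)<\gamma\delta/2+\gamma\delta/2=\gamma\delta\;,
\]
which is the claim. The only delicate point is the accounting of constants, namely that the choice $\eps_i=\gamma\delta_i^2/8$ suffices uniformly over the whole range $\delta\ge\delta_i/2$. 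As computed above, it gives exactly the needed $\gamma\delta/2$ slack. The condition $i\ge i_0$ is only used to ensure $\delta_i/2<\beta_0$, so that the range of $\delta$ is nonempty and \eqref{eq:shring} is available.
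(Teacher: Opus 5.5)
Your proof is correct and essentially the same as the paper's. It uses the same set of points that have small degree in $W_{R_i}$ but lie outside $\SMALLDEG_W(2\delta)$, integrates $W-W_{R_i}$ over that set times $\Omega$ and compares with the cut norm, and does the same bookkeeping with $\eps_i=\gamma\delta_i^2/8$; the only differences are that you argue directly instead of by contradiction, and you rightly flag the endpoint $\delta=\beta_0$, where \eqref{eq:shring} (stated only for $\delta<\beta_0$) does not literally apply, a point the paper passes over.
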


\begin{claimproof}
Assume for the sake of contradiction that $|\SMALLDEG_{R_i}(\delta)|\geq\gamma\delta n_i$ for some $\delta_i/2\leq\delta\leq\beta_0$ and $i\geq i_0$. This gives $\mu(\SMALLDEG_{W_{R_i}}(\delta))\ge \gamma\delta$. Set $S:=\SMALLDEG_{W_{R_i}}(\delta)\setminus \SMALLDEG_W(2\delta)$. Combined with~\eqref{eq:shring}, we have $\mu(S)\ge \frac{\gamma\delta}2$. Also, observe that for every $x\in S$, we have $\int_{y\in \Omega} W(x,y)-W_{R_i}(x,y)\diff\mu(y)>\delta$. This gives
\[\cutn{W-W_{R_i}}\geq\left|\int_{S\times\Omega}W-W_{R_i}\diff \mu^2\right|>\frac{\gamma\delta}2\cdot \delta=\frac{\gamma\delta^2}2\geq \frac{\gamma\delta_i^2}8\eqByRef{eq:cutdistdeltai}\eps_i\;,
\]
contradicting~\eqref{eq:cutnormdist}.
\end{claimproof}

For every $i\in\mathbb N$, let $n_i^*$ be the order of $R^*_i:=R_i-\SMALLDEG_{R_i}(\delta_i)$. We record an immediate consequence of Claim~\ref{clm:vanashRi}.
\begin{claim}\label{clm:wlHa}
For every $i\geq i_0$, property~\ref{en:size} holds for $R_i^*$, with $\delta_i$ in place of $\alpha_0$.    
\end{claim}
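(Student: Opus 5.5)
Claim~\ref{clm:wlHa} should follow directly from Claim~\ref{clm:vanashRi} with $\delta:=\delta_i$. Property~\ref{en:size} for $R_i^*$ with $\delta_i$ in place of $\alpha_0$ asks for $|V(R_i)\setminus V(R_i^*)|=|\SMALLDEG_{R_i}(\delta_i)|\le\gamma\delta_i n_i$. The first equality is just the definition $R_i^*:=R_i-\SMALLDEG_{R_i}(\delta_i)$. For the inequality, I will check that $\delta:=\delta_i$ is admissible in Claim~\ref{clm:vanashRi}, which requires $\delta_i/2\le\delta_i\le\beta_0$ and $i\ge i_0$.

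The lower bound $\delta_i\ge\delta_i/2$ holds trivially. The upper bound $\delta_i\le\beta_0$ (in fact strict) holds for all $i\ge i_0$ by the choice of $i_0$ in~\eqref{eq:i0}. Claim~\ref{clm:vanashRi} then gives $|\SMALLDEG_{R_i}(\delta_i)|<\gamma\delta_i n_i$, which is stronger than the required non-strict inequality.

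There is no real obstacle here. All the analytic work, namely comparing low-degree sets of $W_{R_i}$ with those of $W$ via the cut norm, \eqref{eq:shring}, and the choice $\eps_i=\gamma\delta_i^2/8$, was already done inside Claim~\ref{clm:vanashRi}. The only point needing care is that $i_0$ was chosen to force $\delta_i<\beta_0$, so that $\delta_i$ lies in the range $[\delta_i/2,\beta_0]$ where Claim~\ref{clm:vanashRi} applies.
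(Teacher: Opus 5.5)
Your proposal is correct and matches the paper, which records this claim as an immediate consequence of Claim~\ref{clm:vanashRi} applied with $\delta:=\delta_i$. The range check $\delta_i\in[\delta_i/2,\beta_0]$ for $i\ge i_0$, guaranteed by~\eqref{eq:i0}, is exactly the only thing that needs verifying.
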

Since our initial assumption was that at least one of~\ref{en:size}-\ref{en:connected} fails for $R_i$ and $\delta_i$ in place of $\alpha_0$, we know that there exists a set $A_i\subseteq V(R^*_i)$ with $|A_i|\leq C\gamma \delta_i n_i$ and such that~\ref{en:halfcov} or~\ref{en:connected} does not hold. Set $R_i':=R_i^*-A_i$, $n_i':=v(R_i')$. Let $W_{R_i'}$ be a graphon representation of $R_i'$ given by Lemma~\ref{lem:delete}. We have
\begin{equation}\label{eq:Ri*convergetoW}
\cutn{W_{R_i'}-W}<2(C+1)\delta_i+\eps_i<3C\delta_i\;.
\end{equation}
We first note that $R_i'$ has a guaranteed minimum degree.
\begin{claim}\label{clm:Ri*mindeg}
For every $i\geq i_0$, we have $\mindeg(R_i')\ge \delta_i n_i'/2$.
\end{claim}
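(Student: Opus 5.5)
Every vertex $v\in V(R_i')$ lies in $V(R_i^*)$, so by definition of $R_i^*$ we have $\deg_{R_i}(v)\ge \delta_i n_i$. The plan is to bound how much of this weighted degree is lost when we pass from $R_i$ to $R_i'=R_i-\SMALLDEG_{R_i}(\delta_i)-A_i$. Since all weights lie in $[0,1]$, the loss is at most the number of deleted vertices:
\[
\deg_{R_i'}(v)\ \ge\ \deg_{R_i}(v)-|\SMALLDEG_{R_i}(\delta_i)|-|A_i|.
\]

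First I bound the deleted set. Applying Claim~\ref{clm:vanashRi} with $\delta:=\delta_i$ is admissible, since $\delta_i\in[\delta_i/2,\beta_0]$ by~\eqref{eq:i0}. It gives $|\SMALLDEG_{R_i}(\delta_i)|<\gamma\delta_i n_i$; equivalently, this is Claim~\ref{clm:wlHa}. By choice of $A_i$ we also have $|A_i|\le C\gamma\delta_i n_i$. Hence
\[
\deg_{R_i'}(v)\ \ge\ \delta_i n_i-(C+1)\gamma\delta_i n_i\ \ge\ \delta_i n_i\Big(1-\tfrac{2C}{20C}\Big)=\tfrac{9}{10}\delta_i n_i,
\]
where we used $C>1$ and $\gamma<\frac1{20C}$.

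Finally, the degree threshold in $\mindeg$ is stated relative to $n_i'$. Since $n_i'\le n_i$, we get $\tfrac{9}{10}\delta_i n_i\ge \tfrac{9}{10}\delta_i n_i'\ge \delta_i n_i'/2$, which is the claim.

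There is no real obstacle here; the only points to check are routine. One is that $\deg$ in $R_i'$ means the weighted degree with weights inherited from $R_i$, so the deletion loss is indeed at most one per removed vertex. The other is that $\delta_i$ lies in the admissible range of Claim~\ref{clm:vanashRi}, which is exactly what $i\ge i_0$ guarantees.
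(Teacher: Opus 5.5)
Your proof is correct and matches the paper's argument: you lower-bound $\deg_{R_i}(v)$ by $\delta_i n_i$, subtract at most one unit of weight per deleted vertex, and bound $|\SMALLDEG_{R_i}(\delta_i)|$ by Claim~\ref{clm:vanashRi} and $|A_i|$ by $C\gamma\delta_i n_i$. Your justification of the final step, via $(C+1)\gamma<2C\cdot\frac{1}{20C}=\frac{1}{10}$ together with $n_i'\le n_i$, is slightly more precise than the paper's citation of~\eqref{eq:i0} at that point.
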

\begin{claimproof}
Indeed, take an arbitrary vertex $v\in V(R_i')$. We have $\deg_{R_i}(v)\ge \delta_in_i$. Thus, 
\[
\deg_{R_i'}(v)\ge \deg_{R_i}(v)-|\SMALLDEG_{R_i}(\delta_i)|-|A_i|\ge \delta_in_i-|\SMALLDEG_{R_i}(\delta_i)|-C\gamma \delta_i n_i\gBy{Claim~\ref{clm:vanashRi}} 
\delta_in_i-(1+C)\gamma\delta_i n_i\geByRef{eq:i0} \delta_in_i'/2
\;.
\]
\end{claimproof}

Furthermore, $R_i'$ inherits the property that there are few vertices of small degree.

\begin{claim}\label{clm:expanddegRi*}
For every every $i\geq i_0$ and $\beta\in [0,\beta_0/2)$ we have $|\SMALLDEG_{R_i'}(\beta)|\leq\beta n_i'/10$.
\end{claim}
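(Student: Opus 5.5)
We need to show that for $i\ge i_0$ and $\beta\in[0,\beta_0/2)$ we have $|\SMALLDEG_{R_i'}(\beta)|\le \beta n_i'/10$. The plan is to split according to whether $\beta$ is below or above $\delta_i/2$, and to compare degrees in $R_i'$ with degrees in $R_i$.

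\textbf{Small $\beta$.} If $\beta<\delta_i/2$, Claim~\ref{clm:Ri*mindeg} gives $\mindeg(R_i')\ge \delta_i n_i'/2>\beta n_i'$. Hence $\SMALLDEG_{R_i'}(\beta)=\emptyset$, and the bound holds trivially. (For $\beta=0$ the set is empty by definition.)

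\textbf{Large $\beta$.} Now let $\beta\in[\delta_i/2,\beta_0/2)$ and take $v\in\SMALLDEG_{R_i'}(\beta)$. Passing from $R_i$ to $R_i'$ only deletes the vertices in $\SMALLDEG_{R_i}(\delta_i)\cup A_i$, and each deleted vertex lowers $\deg(v)$ by at most $1$ (the weights lie in $[0,1]$). Therefore
\[
\deg_{R_i}(v)\le \beta n_i' + |\SMALLDEG_{R_i}(\delta_i)|+|A_i| < \beta n_i + \gamma\delta_i n_i + C\gamma\delta_i n_i,
\]
where we used Claim~\ref{clm:vanashRi} with $\delta=\delta_i$ and the bound $|A_i|\le C\gamma\delta_i n_i$. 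Since $\delta_i\le 2\beta$ and $(1+C)\gamma<\tfrac{1}{10}$ (because $\gamma<\tfrac1{20C}$ and $C>1$), this is at most $\beta n_i(1+2(1+C)\gamma)<2\beta n_i$. So $v\in\SMALLDEG_{R_i}(2\beta)$.

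Because $2\beta\in[\delta_i,\beta_0)$, Claim~\ref{clm:vanashRi} applies and gives
\[
|\SMALLDEG_{R_i'}(\beta)|\le|\SMALLDEG_{R_i}(2\beta)|<2\gamma\beta n_i.
\]

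\textbf{Converting to $n_i'$.} It remains to replace $n_i$ by $n_i'$. We have $n_i'\ge n_i-(1+C)\gamma\delta_i n_i\ge n_i/2$, using $\delta_i<\gamma$ from~\eqref{eq:i0}; the weaker input $\delta_i<1$ already suffices for this. Hence $2\gamma\beta n_i\le 4\gamma\beta n_i'$. Finally, $4\gamma<\tfrac{4}{20C}<\tfrac1{10}$ holds exactly when $C>2$. For $C\in(1,2]$ this last step is too loose, so we would tighten the estimates: use $n_i'\ge(1-2\gamma\delta_i C)n_i\approx n_i$, so that the bound becomes $2\gamma\beta n_i'(1+o(1))$, and $2\gamma<\tfrac1{10C}\le\tfrac1{10}$. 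This works because $\delta_i<\gamma$ is tiny.

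The only delicate point is this constant bookkeeping: the factor $2$ lost in passing to $\SMALLDEG_{R_i}(2\beta)$ must be absorbed by the smallness of $\gamma$. The deletion-comparison step itself is routine.
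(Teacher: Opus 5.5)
Your argument is the paper's argument: the split at $\beta=\delta_i/2$, the comparison $\deg_{R_i}(v)\le\deg_{R_i'}(v)+|\SMALLDEG_{R_i}(\delta_i)|+|A_i|$, and an application of Claim~\ref{clm:vanashRi} at a slightly inflated threshold. The problem is in the final constant check, and it is a real gap.

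You round the threshold $\beta n_i\bigl(1+2(1+C)\gamma\bigr)$ up to $2\beta n_i$, which costs a factor $2$. You then need $20\gamma n_i\le n_i'$. You only know $20\gamma<1/C$ and $n_i'\ge(1-(1+C)\gamma\delta_i)n_i$, so for $C\in(1,2]$ you need $(1+C)\gamma\delta_i\le 1-20\gamma$. But $1-20\gamma$ can be arbitrarily small when $C$ is close to $1$ and $\gamma$ is close to $\frac1{20C}$. The condition $\delta_i<\gamma$ from~\eqref{eq:i0} does not supply this. For example, $C=1.001$, $\gamma=0.0499$, $\delta_i=0.04$ satisfy your hypotheses, yet $20\gamma=0.998>1-(1+C)\gamma\delta_i\approx 0.996$. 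Your ``$1+o(1)$'' patch is not legitimate here, since the claim is about every fixed $i\ge i_0$, not about $i\to\infty$. At best it proves the statement after enlarging $i_0$. That would be harmless for the rest of the proof of Proposition~\ref{prop:clustergraph}, but it is not what the claim asserts.

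The fix is already in your computation; just don't round so generously. Since $2(1+C)\gamma<\frac{1+C}{10C}<\frac15$, you get $v\in\SMALLDEG_{R_i}(1.2\beta)$. Also $1.2\beta\in[\delta_i/2,\beta_0)$, so Claim~\ref{clm:vanashRi} gives $|\SMALLDEG_{R_i'}(\beta)|<1.2\gamma\beta n_i<0.06\,\beta n_i$. Combined with $n_i'\ge 0.9\,n_i$ (from $(1+C)\gamma<\frac1{10}$ and $\delta_i<1$), this is less than $\beta n_i'/10$ for every $C>1$, with no case distinction on $C$. This is exactly how the paper closes the argument.
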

\begin{claimproof}
If $\beta<\delta_i/2$, then $\SMALLDEG_{R_i'}(\beta)=\emptyset$ by Claim~\ref{clm:Ri*mindeg}. So assume that $\beta\geq\delta_i/2$ and observe that
\begin{align}\label{eq:tos}
|\SMALLDEG_{R_i'}(\beta)&|\le |\SMALLDEG_{R_i}(\beta+|\SMALLDEG_{R_i}(\delta_i)|+|A_i|)|\le|\SMALLDEG_{R_i}(\beta+(1+C)\gamma\delta_i)|\;.    
\end{align}
Recalling the assumption $\gamma\in(0,\tfrac{1}{20C})$ we get the bound $\beta+(1+C)\gamma\delta_i\le \beta+\frac{\delta_i}{10}\le 1.2\beta$. In particular, this number is less than $\beta_0$, and hence Claim~\ref{clm:vanashRi} applies. We continue with~\eqref{eq:tos},
\begin{align*}
|\SMALLDEG_{R_i'}(\beta)&\leq|\SMALLDEG_{R_i}(1.2\beta)|\lBy{Claim~\ref{clm:vanashRi}}1.2\gamma\beta n_i<\beta n_i'/10\;,
\end{align*}
as was needed.
\end{claimproof}

In Claim~\ref{clm:wlHa}, we have already established property~\ref{en:size}. The next claim establishes property~\ref{en:connected}, too.
\begin{claim}\label{clm:wlHc}
For every $i\geq i_0$, property~\ref{en:connected} holds for $R_i'=R^*_i-A_i$.
\end{claim}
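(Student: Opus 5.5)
We argue by contradiction, combining the minimum-degree bound of Claim~\ref{clm:Ri*mindeg} with Proposition~\ref{prop:connectedgraphon}. Suppose that for some $i\ge i_0$ the graph $R_i'$ is disconnected, and let $X$ be the vertex set of a smallest connected component. Then $|X|\le \frac12 n_i'$ and $e_{R_i'}(X,V(R_i')\setminus X)=0$.

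\textbf{Step 1: $X$ is not too small.} Pick any $v\in X$. All neighbours of $v$ in $R_i'$ lie in $X$ and all weights are at most $1$. Claim~\ref{clm:Ri*mindeg} then gives $|X|\ge \deg_{R_i'}(v)\ge \delta_i n_i'/2$. This is linear in $n_i'$, but only with the tiny constant $\delta_i$, whereas Proposition~\ref{prop:connectedgraphon} needs $|X|\ge \zeta n_i'$ with $\zeta=\beta_0/10$ fixed. So we must bootstrap.

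\textbf{Step 2: bootstrapping, the main obstacle.} Every vertex of $X$ has all its neighbours inside $X$, so $\deg_{R_i'}(v)\le |X|$ for all $v\in X$. Suppose $|X|<\beta_0 n_i'/10$. Set $\beta:=2|X|/n_i'$, so that $\beta<\beta_0/5<\beta_0/2$. Every vertex of $X$ has degree at most $|X|<\beta n_i'$, hence $X\subseteq \SMALLDEG_{R_i'}(\beta)$. Claim~\ref{clm:expanddegRi*} then yields
\[
|X|\le |\SMALLDEG_{R_i'}(\beta)|\le \beta n_i'/10=|X|/5,
\]
which is impossible since $X\neq\emptyset$. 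Hence $|X|\ge \beta_0 n_i'/10=\zeta n_i'$. Step~1 is then not really needed, except to guarantee $X\ne\emptyset$. Some care is needed with strict versus non-strict degree thresholds in the definition of $\SMALLDEG$; choosing $\beta$ slightly larger than $|X|/n_i'$ handles this.

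\textbf{Step 3: conclusion.} Now $|X|\in[\zeta n_i',\frac12 n_i']$ and, by \eqref{eq:Ri*convergetoW} and \eqref{eq:i0}, the cut distance of $R_i'$ from $W$ is less than $3C\delta_i<\kappa$. Proposition~\ref{prop:connectedgraphon}, applied with the $\kappa$ chosen for $\zeta=\beta_0/10$, gives $e_{R_i'}(X,V(R_i')\setminus X)>0$. This contradicts the choice of $X$ as a component, so $R_i'$ is connected.
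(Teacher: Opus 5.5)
Your proof is correct and follows essentially the same route as the paper: a small smallest component is ruled out via Claim~\ref{clm:expanddegRi*}, and a component of size at least $\zeta n_i'$ contradicts Proposition~\ref{prop:connectedgraphon} with $\zeta=\beta_0/10$. The only difference is that you case-split on $|X|$ with threshold $\beta=2|X|/n_i'$, whereas the paper case-splits on the maximum degree $\Delta$ in the component. Your choice neatly avoids the strict-inequality issue in the definition of $\SMALLDEG$, which the paper's inclusion $C_1\subset\SMALLDEG_{R_i'}(\Delta/n_i')$ glosses over.
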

\begin{claimproof}
Suppose for a contradiction that $R_i'$ is not connected. Let $C_1\subset V(R_i')$ be a connected component of $R_i'$ of minimum size. Set $C_2:=V(R_i')\setminus C_1$, and $\Delta:=\max_{v\in C_1}\deg_{R_i'}(v)$. Note that $|C_1|\ge \Delta$ since $C_1$ is a connected component.
We first consider the case that $\Delta \ge \tfrac{\beta_0}{2}n_i'$. In particular, $|C_1|\ge \tfrac{\beta_0}{2}n_i'$. By~\eqref{eq:Ri*convergetoW} and~\eqref{eq:i0} we have that $\cutd(W_{R_i'},W)<\kappa$. Therefore by Proposition~\ref{prop:connectedgraphon} we have $e_{R_i'}(C_1,C_2)>0$, contradicting that $C_1$ is a connected component. Next, consider the case that $\Delta< \tfrac{\beta_0}{2}n_i'$. Observe that $C_1\subset \SMALLDEG_{R_i'}(\Delta/n_i')$. Therefore,
$$\Delta \le|C_1|\le |\SMALLDEG_{R_i'}(\Delta/n_i')|\leBy{Claim~\ref{clm:expanddegRi*}}\Delta/10\;,$$
a contradiction, as $\Delta>0$ by Claim~\ref{clm:Ri*mindeg}.
\end{claimproof}

Since $R_i, \delta_i$ and $A_i$ are chosen to violate the assertions of the proposition, but we have shown in Claim~\ref{clm:wlHa} and Claim~\ref{clm:wlHc} that~\ref{en:size} and~\ref{en:connected} hold for every $i\geq i_0$, we conclude that~\ref{en:halfcov} does not hold. Hence for every $i\geq i_0$, $R_i'$ has an optimal half-integral vertex cover $c_i:V(R_i')\to\{0,\frac12,1\}$ which is not constant-$\frac12$. We set $S_i:=c_i^{-1}(0)$, $P_i:=c_i^{-1}(1)$ and $\Delta_i:=\max_{v\in S_i}\deg_{R_i'}(v)$. Since $c_i$ is a half-integral vertex cover of total weight at most $\frac{n_i'}2$, we have $\frac12|c_i^{-1}(\frac12)|+|c_i^{-1}(1)|\le \frac{n_i'}2=\frac12|c_i^{-1}(0)|+\frac12|c_i^{-1}(\frac12)|+\frac12|c_i^{-1}(1)|$, or equivalently, 
\begin{equation}\label{eq:SH}
|S_i|\ge |P_i|\;.
\end{equation}
Also, since $c_i$ is not constant-$\frac12$, $S_i$ is nonempty. Take a vertex $v_i\in S_i$ whose degree is $\Delta_i$. Since $c_i(v_i)=0$ and $c_i$ is a fractional vertex cover, we see that each neighbor of $v_i$ is in $P_i$. In particular,
\begin{equation}\label{eq:PiBig}
|P_i|\ge \Delta_i\;.
\end{equation}

To conclude the proof, we distinguish two exhaustive, though not necessarily disjoint, cases.

\emph{Case~I. For infinitely many $i$ we have $\Delta_i\ge \beta_0n_i'/2$.}
We write $f_i:\Omega\to\{0,\frac12,1\}$ for the corresponding half-integral vertex cover of $W_{R_i'}$. 
We use Proposition~\ref{prop:weakstarlimit} on the sequence of graphons $W_{R_i'}$ that satisfy Case~I, together with their half-integral vertex covers $f_i$. These graphons converge to $W$ by~\eqref{eq:Ri*convergetoW}. The assumption of Case~I gives us that 
\[
\mu(f_i^{-1}(0))=\frac{|S_i|}{n_i'}\geBy{\eqref{eq:SH},\eqref{eq:PiBig}}\frac{\Delta_i}{n_i'}\geBy{Case~I} \beta_0/2\;.
\]
In particular, the right-hand side of Proposition~\ref{prop:weakstarlimit}\ref{H1ttt} is strictly positive. That is, Proposition~\ref{prop:weakstarlimit} gives that $W$ has a non-constant half-integral vertex cover of $L^1$-norm at most $\frac{1}{2}$, contradicting our key assumption~\ref{en:mainHC3} by Fact~\ref{fct:halfintvcispeninsula}. 

\emph{Case~II. There exists at least one $i\ge i_0$ for which we have $\Delta_i<\beta_0 n_i'/2$.}
Take such an index $i$. Observe that $S_i\subset \SMALLDEG_{R_i'}(\Delta_i/n_i')$. From the assumption of Case~II, Claim~\ref{clm:expanddegRi*} applies on $\SMALLDEG_{R_i'}(\Delta_i/n_i')$. Hence,
$$
\Delta_i\leByRef{eq:PiBig}|P_i|\leByRef{eq:SH} |S_i|\le |\SMALLDEG_{R_i'}(\Delta_i/n_i')| \leBy{Claim~\ref{clm:expanddegRi*}} \Delta_i/10\;,
$$
a contradiction, as $\Delta_i>0$ by Claim~\ref{clm:Ri*mindeg}.
\end{proof}

\section{Absorption}\label{sec:Absoption}
In order to convert the almost spanning cycle---obtained via the regularity lemma and Łuczak's connected matchings method---into a true Hamilton cycle, we employ the absorption method, originally introduced by Rödl, Ruciński, and Szemerédi~\cite{RoRuSzAbsor06}. The absorber needed for our purposes is constructed in Lemma~\ref{lem:abs}.

We say that a graph~$G$ is \emph{$(\zeta,X, U, \ell)$-connected} if for every pair of vertices $u,v \in X$ there are at least $\zeta |U|^\ell$ paths from $u$ to $v$ whose $\ell$ internal vertices lie in~$U$.

\begin{defi}\label{def:absorber}
    Given $\gamma>0$, an $n$-vertex graph~$G$, and a set~$U\subseteq V(G)$, we say that a path~$\cQ\subseteq G$ is a~\emph{$(U,\gamma)$-absorbing path} if for every subset of vertices~$B\subseteq U$ of size at most~$\gamma n$, there is a path containing all vertices from~$V(\cQ)\cup B$, with the same ends as~$\cQ$.
\end{defi}

\begin{lem}\label{lem:abs}
    Given~$\eps, \zeta>0$ and~$\ell, t\in \mathds N$ there is a~$\gamma'>0$ such that for every~$\gamma\in (0,\gamma')$, the following holds for every sufficiently large~$n\in \mathds N$.
    Let~$G$ be a graph on~$n$ vertices and~$U\subseteq V(G)$ such that~$G$ is~$(\zeta, U , U, t)$-connected. 
    Suppose that for every~$v\in X$ there are at least~$\eps n^{2\ell}$ cycles~$C_{2\ell+1}$ containing~$v$. 
    Then~$G$ contains a~$(U,\gamma^2)$-absorbing path of size at most~$\gamma n$.
\end{lem}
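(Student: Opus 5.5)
We follow the standard Rödl--Ruciński--Szemerédi scheme: local absorbers for single vertices, built from the odd cycles, are chosen at random and chained together via the connectivity property. Throughout we read the hypothesis on cycles with $X=U$, that is, every $v\in U$ lies in at least $\eps n^{2\ell}$ copies of $C_{2\ell+1}$.

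\emph{Step 1: local absorbers.} Fix $v\in U$ and a cycle $v,w_1,\dots,w_{2\ell},v$ through $v$. The path $P=w_1w_2\cdots w_{2\ell}$ on $2\ell$ vertices has ends $w_1,w_{2\ell}$, both adjacent to $v$. Hence $P$ can be rerouted as $w_1 v w_{2\ell} w_{2\ell-1}\cdots w_2$. This is not quite the right shape, since an absorber must keep its end-vertices. So we use the standard trick of taking the ordered $2\ell$-tuple $(w_1,\dots,w_{2\ell})$ as an absorber with ends $w_1$ and $w_2$. The path $w_1 w_{2\ell} w_{2\ell-1}\cdots w_3 w_2$ is present (it uses the cycle edges $w_{2\ell}w_{2\ell-1},\dots,w_3w_2$ together with $w_1w_{2\ell}$ only if that edge exists), so this choice needs adjusting. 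Instead we take as absorbing tuple for $v$ the pair consisting of $P$ itself together with the alternative $w_1 v w_{2\ell}$-reroute, and we fix the ends as $w_1$ and $w_{2\ell}$. The path $P$ covers $\{w_1,\dots,w_{2\ell}\}$. The closed walk with $v$ inserted cannot keep both ends, so we replace it by the cycle $C_{2\ell+1}$ minus one edge $w_jw_{j+1}$, which gives a path on all $2\ell+1$ vertices from $w_j$ to $w_{j+1}$. The correct gadget is therefore the cycle minus $v$, cut between $w_\ell$ and $w_{\ell+1}$, viewed as two segments joined through connecting paths. This detailed gadget design, making the odd-cycle counts yield $\Omega(n^{2\ell})$ absorbing tuples per vertex with fixed ends in both states, is the step I expect to be the main obstacle. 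I would first try the two-segment version: the segments $w_1\cdots w_\ell$ and $w_{\ell+1}\cdots w_{2\ell}$ are normally linked by a connecting path, and absorbing $v$ means linking them via $w_\ell v w_{\ell+1}$ instead. The parity issue is handled by the flexible connection lengths, and if needed one uses both $t$ and $t+1$ via the odd cycles.

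\emph{Step 2: random selection.} Choose each $2\ell$-tuple independently with probability $p=\gamma n^{1-2\ell}/(100\ell)$. By Chernoff bounds and Markov's inequality (Lemma~\ref{lem:Chernoff}), with positive probability the following hold simultaneously. At most $\gamma n/(10\ell)$ tuples are chosen. At most $O(\gamma^2 n)$ pairs of chosen tuples intersect. Every $v\in U$ has at least $\eps p n^{2\ell}/2=\Omega(\gamma n)$ chosen absorbing tuples. We delete intersecting tuples and tuples that are not absorbers, which leaves a family $\mathcal F$ of disjoint absorbers in which each $v$ keeps at least $c\gamma n\ge 2\gamma^2 n$ absorbers, provided $\gamma<\gamma'$ is small.

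\emph{Step 3: connecting.} Connect the tuples of $\mathcal F$ greedily into one path $\cQ$ using $(\zeta,U,U,t)$-connectivity. At each step at most $O(\gamma n)$ vertices are used, which forbids at most $O(\gamma n)\cdot n^{t-1}\ll\zeta|U|^t$ of the connecting paths, provided $\gamma'\ll\zeta$ and we assume $|U|$ is linear in $n$. The ends of the tuples and the connecting vertices lie in $U$. This gives $|V(\cQ)|\le |\mathcal F|(2\ell+t+2)\le\gamma n$.

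\emph{Step 4: absorbing.} Given $B\subseteq U$ with $|B|\le\gamma^2 n$, assign to the vertices of $B$ distinct absorbers greedily, which is possible since each has at least $2\gamma^2n$ of them. Then switch each assigned absorber to its absorbing state. This yields a path on $V(\cQ)\cup B$ with the same ends.
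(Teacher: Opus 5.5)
Your Steps 2--4 follow the paper's scheme: random selection with Markov/Chernoff, deletion of overlapping gadgets, greedy chaining through $(\zeta,U,U,t)$-connectivity, and greedy assignment of $B$. However, Step~1 never produces a valid absorber, and the per-vertex count in Step~2 depends on it.

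An absorber for $v$ must be a set $S$ that carries a path on $S$ and a path on $S\cup\{v\}$ with the same two ends. A single cycle $vw_1\cdots w_{2\ell}v$ does this only for $\ell=1$ ($w_1w_2$ versus $w_1vw_2$). Using only the edges you can guarantee (those of the cycle), the sole Hamilton path on $\{w_1,\dots,w_{2\ell}\}$ joins $w_1$ to $w_{2\ell}$. Every Hamilton path on all $2\ell+1$ vertices joins two cycle-neighbours, and for $\ell\ge 2$ the vertices $w_1,w_{2\ell}$ are not cycle-neighbours.

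Your two-segment repair fails for a different reason. Suppose the segments are joined by a connecting path in the idle state, and that path is replaced by a detour through $v$ in the absorbing state. Then the interior vertices of the connecting path are dropped, so the new path no longer contains $V(\cQ)$, as Definition~\ref{def:absorber} requires. Also, with your labelling $v$ is adjacent to $w_1,w_{2\ell}$, not to $w_\ell,w_{\ell+1}$. Connections with $t+1$ internal vertices are not available either, since the hypothesis provides exactly $t$. So the claim in Step~2 that each $v$ keeps about $\eps p n^{2\ell}/2$ chosen absorbers presumes that every cycle through $v$ is an absorber, which is false.

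The missing idea is a larger gadget together with a supersaturation argument.

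\emph{The gadget.} The paper uses $C^*_{2\ell+1}(2)$, the blow-up of $C_{2\ell+1}$ in which every vertex except $a_0=v$ is doubled. Its vertices are $a_0,a_1,b_1,\dots,a_{2\ell},b_{2\ell}$, with complete bipartite graphs between $\{a_i,b_i\}$ and $\{a_{i+1},b_{i+1}\}$, and $a_0$ joined to $a_1,b_1,a_{2\ell},b_{2\ell}$. Then $a_1a_2b_1b_2a_3a_4b_3b_4\cdots a_{2\ell-1}a_{2\ell}b_{2\ell-1}b_{2\ell}$ and $a_1a_0a_{2\ell}a_{2\ell-1}\cdots a_2b_1b_2\cdots b_{2\ell}$ are paths with the same ends $a_1,b_{2\ell}$, and their vertex sets differ exactly by $v$.

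\emph{Many copies.} To get $\Omega(n^{4\ell})$ copies with $v$ in the role of $a_0$:
\begin{enumerate}
\item View the $\eps n^{2\ell}$ cycles through $v$ as a $2\ell$-uniform hypergraph on $V(G)\setminus\{v\}$.
\item Apply Erd\H{o}s's theorem to find $\Omega(n^{2c\ell})$ copies of $K^{(2\ell)}_{c,\dots,c}$.
\item Use Ramsey's theorem over the $(2\ell)!$ cyclic orders to extract copies of $K^{(2\ell)}_{2,\dots,2}$ all of whose edges close into cycles with the same order.
\item Double counting then gives the bound.
\end{enumerate}
With this $4\ell$-vertex gadget, your Steps 2--4 go through essentially as written.

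Finally, Step~3 needs the gadget ends to lie in $U$ and needs $|U|=\Omega(n)$. Neither follows from your reading of the hypothesis. Both hold if the cycles are taken inside $G[U]$, as in the paper's proof (giving $|U|\ge \eps^{1/(2\ell)}n$) and in its application.
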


\begin{proof}
Throughout the proof we may assume for $n,\ell,t,c\in\mathbb N$ and $\gamma,\eps,\eps_1,\zeta>0$ the constant hierarchy
\begin{equation}\label{eq:conshier}
\tfrac{1}{n}\ll\gamma\ll\eps_1\ll\tfrac{1}{c}\ll\eps,\zeta,\tfrac{1}{\ell},\tfrac{1}{t}\;.
\end{equation}
Let~$C_{2\ell+1}^*(2)$ be the blow-up of~$C_{2\ell+1}$ on an auxiliary vertex set, where every vertex is duplicated once except for one vertex. 
More precisely,~$C_{2\ell+1}^*(2)$ is a graph with vertices in~$\{a_0, a_1,b_1, \dots, a_{2\ell}, b_{2\ell}\}$ and all edges of the form~$a_ia_{i+1}, b_ib_{i+1}, a_ib_{i+1},b_ia_{i+1}$ for every~$i\in [2\ell-1]$ plus the edges~$a_0a_1$, $a_0b_1$,~$a_0a_{2\ell}$, and~$a_0b_{2\ell}$.

The following claim follows from a well-known result of Erd\H os. 
\begin{claim}\label{cl:ErdosDouble}
Every vertex $v\in U$ is contained in at least $\eps_1 n^{4\ell}$ copies of $C_{2\ell+1}^*(2)$, with~$v$ playing the role of~$a_0$. (For this $\eps_1$ depends on $\eps$ and $\ell$ only.)
\end{claim}
\begin{claimproof}
Set~$\eta:=\eps_1 \binom c2^{2\ell}$.
Take an arbitrary vertex~$v\in U$ and construct an auxiliary~$2\ell$-uniform hypergraph~$\cC$ with vertices on~$V(G)\setminus \{v\}$ and edges given by
\begin{equation*}
    E(\cC) = \{e\subseteq V(G)\setminus \{v\} \colon e\cup \{v\} \text{ spans a copy of~$C_{2\ell+1}$}\}\,. 
\end{equation*}
Note that by assumption $|E(\cC)|\geq \eps n^{2\ell}$.
If~$K_{c,\dots, c}^{(2\ell)}$ denotes the complete~$2\ell$-partite~$2\ell$-uniform hypergraph with~$c$ vertices in each partite class, a classical result of Erd\H os~\cite[Corollary 2]{Erdos83supersat} together with $\eta\ll\eps$ implies that
\begin{equation}\label{eq:Kc}
    \text{$\cC$ contains at least $\eta n^{2c\ell}$ many copies of } K_{c,\dots, c}^{(2\ell)}\,.
\end{equation}
Each edge of such a copy of~$K_{c,\dots, c}^{(2\ell)}$ in~$\cC$, together with~$v$, represents a copy of~$C_{2\ell+1}$ in~$G$ in one of the~$(2\ell)!$ possible cyclic orderings (with respect to the vertex partition). 
Since~$c$ is large compared to~$\ell$, an application of the (partite) Ramsey theorem with~$(2\ell)!$ many colours entails that every copy of $K_{c,\dots, c}^{(2\ell)}$ in~$\cC$ contains a copy of~$K_{2,\dots, 2}^{(2\ell)}$ in which all edges together with~$v$ represent copies of~$C_{2\ell+1}$ in~$G$ with the same cyclical ordering. 
We denote by~$\Kcyc$ a copy of~$K_{2,\dots, 2}^{(2\ell)}$ in~$\cC$ satisfying this property. 
The argument above entails
\begin{equation}\label{eq:Kcyc}
    \text{each copy of $K_{c,\dots, c}^{(2\ell)}$ in~$\cC$ contains a copy of~$\Kcyc$}\,.
\end{equation}

Let~$\kappa^{\text{cyc}}$ and~$\kappa_c$ denote the number of copies of~$\Kcyc$ and~$K_{c,\dots, c}^{(2\ell)}$ in~$\cC$, respectively. 
Observe that each copy of $\Kcyc$ can be contained in at most~$n^{2\ell(c-2)}$ many copies of~$K_{c,\dots, c}^{(2\ell)}$. 
Moreover, each copy of~$K_{c,\dots, c}^{(2\ell)}$ contains at most~$\binom{c}{2}^{2\ell}$ many copies of~$\Kcyc$. 
Combining those observations and considering \eqref{eq:Kc} and \eqref{eq:Kcyc}, we get
\begin{align*}
    \eta\cdot n^{2c\ell} \leq \kappa_{c} \leq \frac{\kappa^{\text{cyc}}n^{2\ell(c-2)}}{\binom c2^{2\ell}}\,,
\end{align*}
which implies~$\kappa^{\text{cyc}}\geq \eps_1n^{4\ell}$. This proves the claim, since each such~$K^{\text{cyc}}$ yields a copy of~$C_{2\ell+1}^{*}(2)$ in~$G$ where~$v$ plays the role of~$a_0$.
\end{claimproof}

Consider a copy of~$C_{2\ell+1}^*(2)$ in $G$. 
We hence view its vertices $\{a_0, a_1,b_1, \dots, a_{2\ell}, b_{2\ell}\}$ as in $G$. 
It is easy to see that the following two vertex orderings form paths:
 \begin{enumerate}[label=(V\arabic*)]
     \item \label{it:nov} $a_1,a_2,b_1,b_2,\dots, a_{2i-1}, a_{2i},b_{2i-1},b_{2i}, \dots, a_{2\ell-1}, a_{2\ell}, b_{2\ell-1}, b_{2\ell}$, and 
     \item \label{it:v} 
     $a_1,a_0, a_{2\ell}, a_{2\ell-1}, \dots, a_2, b_1, b_2, \dots, b_{2\ell}$
 \end{enumerate}
Note that both paths start and end in the same vertices and both span the same set of vertices except for~$a_0=v$, which is only contained in the second path. 
We say that a graph~$C\subseteq G$ on~$4\ell$ vertices is an \textit{absorber for~$v$} if~$C$ 
together with~$v$ span a copy of~$C_{2\ell+1}^*(2)$ with $v$ taking the role of $a_0$.

For every vertex~$v\in U$ define $\cA_{v}= \{C\colon C \text{ is an absorber for~$v$}\}$. Further, let $\cA = \bigcup_{v\in U} \cA_v$. We have by Claim~\ref{cl:ErdosDouble} that
\begin{equation}
\label{eq:cardinalitycAv}
|\cA_v|\ge \eps_1n^{4\ell}\text{ for every $v\in U$\;.}
\end{equation}
Since by assumption $U$ is nonempty and contains at least $\eps n^{2\ell}$ many paths of length $2\ell-1$, we have $|U|^{2\ell}\geq\eps n^{2\ell}$ implying $|U|\geq \sqrt[2\ell]{\eps}n$. Therefore for $\eps_2:=\eps_1\sqrt[2\ell]{\eps}$ we get
\begin{equation}
\label{eq:cardinalitycA}
n^{4\ell+1}\ge|\cA|\ge |U|\cdot \eps_1n^{4\ell}\ge \eps_2 n^{4\ell+1}\;.
\end{equation}

\begin{claim}\label{cl:randomsubset}
There exists a set $\hat\cA\subset \cA$ such that 
\begin{enumerate}[(a)]
    \item\label{cl:D1} $|V(\hat\cA)| \leq \gamma n$,
    \item\label{cl:D2} $V(C_1)\cap V(C_2) = \emptyset$ for every pair~$C_1, C_2\in \hat\cA$ with~$C_1\neq C_2$, and
    \item\label{cl:D3} for every vertex~$v\in U$ we have~$|\hat \cA\cap \cA_v|\geq \gamma^2 n$.
\end{enumerate}
\end{claim}
\begin{claimproof}
Since~$\gamma\ll \eps_1,\eps_2$ we may chose a constant~$\alpha \in (0,1]$ such that
\begin{equation}\label{eq:amimrtv}
\frac{4\gamma^2}{\eps_1} \leq \alpha \leq  \frac{\eps_1\eps_2\gamma^{3/2}}{5\ell}\,.    
\end{equation}
For any $\cA_*\subset \cA$, let $\cB(\cA_*)=\{(C_1,C_2) \in \cA_*^2\colon C_1\neq C_2 \textrm{ and } V(C_1)\cap V(C_2) \neq \emptyset \}$. We want to get an upper-bound on $|\cB(\cA)|$. 
Let us count those pairs $(C_1,C_2)\in \cA^2$ for which $C_1\neq C_2$ and $V(C_1)\cap V(C_2)\ni w$ for a given vertex $w\in V(G)$. In such a situation, $C_1$ is determined by its remaining $4\ell$ vertices and so is $C_2$. Summing over all $w\in V(G)$, we get
\begin{equation}\label{eq:boundBA}
 | \cB(\cA)|\le n\cdot n^{4\ell}\cdot n^{4\ell}= n^{8\ell+1}\;.   
\end{equation}

Let~$\hat\cA_0$ be a random collection picked from~$\cA$, by including each element independently with probability~$p = \alpha n/|\cA|$.
We have $\Expectation[|V(\hat\cA_0)|] =2\ell\cdot |\cA|\cdot p= 2\ell\cdot \alpha n\leq \gamma n/2$. By Markov's inequality,
\begin{equation}\label{eq:errprob1}
\Probability[|V(\hat\cA_0)|\geq \gamma n] \leq 1/2\;.
\end{equation}

Next, we look at $\Expectation[|\cB(\hat\cA_0)|]$. We have 
\[
\Expectation[|\cB(\hat\cA_0)|]=p^2\cdot |\cB(\cA)|
\leBy{\eqref{eq:cardinalitycA}, \eqref{eq:boundBA}}
\left(\frac{\alpha n}{\eps_2 n^{4\ell+1}}\right)^2\cdot
n^{8\ell+1}
\leByRef{eq:amimrtv} \gamma^3 n\;.
\]
By Markov inequality's,
\begin{equation}\label{eq:errprob2}
\Probability[|\cB(\hat\cA_0)|\geq 4\gamma^3n] \leq 1/4\;.
\end{equation}

Last, we look at $\Expectation[|\hat\cA_0\cap \cA_v|]$ for any vertex~$v\in U$. We have 
$\Expectation[|\hat\cA_0\cap \cA_v|]=p\cdot |\cA_v| =\alpha n\cdot \frac{|\cA_v|}{|\cA|}$. We can now use~\eqref{eq:cardinalitycA}, \eqref{eq:cardinalitycAv}, and \eqref{eq:amimrtv} and get $\Expectation[|\hat\cA_0\cap \cA_v|]\ge 4\gamma^2 n$. Therefore, by the Chernoff inequality (Lemma~\ref{lem:Chernoff}\ref{it:Chernoff1}) and the union bound we obtain 
\begin{equation}\label{eq:errprob3}
\Probability\big[\text{there is a vertex~$v\in U$ with }|\hat\cA_0\cap \cA_v| \leq 2\gamma^2 n\big] \leq n\exp(-\gamma^2 n/2) < 1/4\,.
\end{equation}
Thus, with positive probability the random set~$\hat \cA_0$ avoids the properties listed in~\eqref{eq:errprob1}, \eqref{eq:errprob2}, \eqref{eq:errprob3}. Let us fix such a set $\hat \cA_0$.

Finally, we obtain~$\hat\cA$ from $\cA_0$ by removing all absorbers that are contained in some pair in~$\cB(\hat\cA_{0})$. Since $\cA_0$ avoids the property listed in~\eqref{eq:errprob1}, we have~\ref{cl:D1}. The last removal step ensures~\ref{cl:D2}. Finally, to verify~\ref{cl:D3}, we combine the lower bound $|\hat\cA_0\cap \cA_v| > 2\gamma^2 n$ from~\eqref{eq:errprob3} with the fact that the number of removed absorbers in the last step was less than $4\gamma^3n$ by~\eqref{eq:errprob2}.
\end{claimproof}

Observe that each~$C\in \cA_v$ can be viewed as a path like in \ref{it:nov}, i.e.\ not containing~$v$. 
In other words,~$\hat \cA$ can be viewed a disjoint collection of paths. 
Moreover, given a set of vertices~$U'\subseteq U$ of size at most~$\gamma^2 n$, we can greedily assign each vertex~$v\in U'$ to its own absorber~$C\in \hat\cA \cap \cA_v$.
In this way, using the paths from~\ref{it:v}, the family~$\hat\cA$ together with the vertices in~$U'$ can also be viewed as a collection of paths, with the same endings as the paths in~$\hat\cA$ (now containing all vertices from~$U'$).
This is precisely the absorber property required in Definition~\ref{def:absorber}. 

We only need to connect the paths in~$\hat\cA$. 
We do this directly by fixing an ordering~$\hat \cA=\{C_1,\dots , C_{r}\}$ and iteratively connecting each~$C_i$ with the next~$C_{i+1}$. 
In each case we have at least~$\zeta |U|^t\geq \zeta(\sqrt[2\ell]{\eps}n)^t$ potential connections with~$t$ internal vertices from $U$.
At most $r\cdot t\leq \tfrac{\gamma t}{2\ell}n$ many vertices are blocked from previous connections and at most $|V(\hat\cA)|\leq \gamma n$ by vertices of~$\hat \cA$. Hence at any step at most $(\tfrac{\gamma t}{2\ell}+\gamma)n^t=\gamma\tfrac{t+2\ell}{2\ell}n^t$ paths are blocked for the next connection.
Thus, by~\eqref{eq:conshier} we always have a straightforward choice to join all paths into the required absorbing path~$\cQ$. 
\end{proof}

\section{Proof of the main part of Theorem~\ref{thm:mainHC}}\label{sec:proof}

Suppose that $W$ satisfies conditions \ref{en:mainHC1}-\ref{en:mainHC3}, and $n$ is sufficiently large. For the rest of the proof, we introduce additional constants with the following hierarchy:
\begin{equation}\label{eq:mainhierarchy}
0<\tfrac{1}{n} \ll \tfrac{1}{r_2}\ll\eps_2 \ll d_2\ll \rho \ll\gamma  \ll \zeta \ll \xi \ll \tfrac{1}{r_1} \ll \eps_1 \ll d_1 \ll \alpha \ll \beta\ll 1\;.
\end{equation}
Observe that by Proposition~\ref{prop:lowdegreePaths} (applied with $2\alpha$ in place of $\alpha$ and with $\eps<\alpha/2$) and by Lemma~\ref{lem:sampledist} we have that~$G\sim\mathbb G(n,W)$ a.a.s. satisfies all of the following properties.
    \begin{enumerate}[label=(F\arabic*)]
        \item\label{it:proplowdeg} $G$ contains a collection of vertex-disjoint paths~$\cP$ such that 
        \begin{enumerate}[label=(\roman*)]
        \item \label{it:main1-lowdeginP} $\SMALLDEG_G(2\alpha)\subseteq V(\cP)$,
        \item \label{it:main1-highenddeg} the end vertices of each path have degree at least $2\alpha n$,
        \item \label{it:boundvrtP} $|V(\cP)|<\alpha^2 n$, and 
        \item \label{it:main1-fewpaths} $|\mathcal{P}|<2/\alpha$.
        \end{enumerate}
        \item\label{it:cutnorm} There exists a graphon representation $W_G$ of $G$ with $\cutn{W_G-W}\leq \eps_2/2$.
    \end{enumerate}
    We proceed to show that then~$G=(V,E)$ contains a Hamilton cycle. This will prove the theorem.

   \subsubsection*{Step 1: Covering all low degree vertices with few paths}
    We fix $\cP$ given by~\ref{it:proplowdeg} and note that in particular $\cP$ contains all vertices of degree less than~$\alpha n$. 
    Set~$X:= \{ v\in V \colon \deg_G(v)\geq \alpha n\}$ and note that then~$V = X\cup V(\cP)$.  

    \subsubsection*{Step 2: Finding an absorber}
    In this step, we want to use Lemma~\ref{lem:abs} to find an~$(X\setminus V(\cP), \gamma^2)$-absorbing path~$\cQ$.
    Recall that by~\eqref{eq:mainhierarchy} we have $1/r_1 \ll\eps_1$. Hence we may apply Theorem~\ref{thm:reglem}, this time with a trivial prepartiton $V=\emptyset\sqcup V$, and obtain an~$\eps_1$-regular partition~$V_1,\dots, V_{r_1}$ of~$G$. 
    To use Lemma~\ref{lem:abs}, we need to show that~$G$ is~$(\zeta, X\setminus V(\cP), L+1)$-connected and that every vertex in~$X$ is contained in at least~$\zeta n^{L+1}$ cycles in $X\setminus V(\cP)$ of length~$L+2$, for some odd~$L\leq 2r_1$.
    Let $R$ be the~$(\eps_1,d_1)$-reduced graph of $G$ with respect to the partition $\{V_i\}_{i\in[r_1]}$.
    By~\ref{it:cutnorm} we have $\cutn{W_G-W}\leq\eps_2/2$.    
    By Fact~\ref{fact:distredgraph} there exists a graphon representation $W_{R}$ such that $\cutn{W_{R}-W_{G}}<d_1+2\eps_1$. Altogether this gives~$\cutn{W-W_R}\leq \cutn{W_{R}-W_{G}}+\cutn{W_G-W}\leq\eps_2/2+d_1+2\eps_1 \leq 2d_1$.
    With our choice of constants, Proposition~\ref{prop:clustergraph} applied with~$\beta$ instead of~$\gamma$ and with~$C=\tfrac{1}{100\beta}$ entails that
    \begin{equation}\label{eq:smalldegalpha}
    |\SMALLDEG_R(\alpha)|\leq\alpha\beta r_1
    \end{equation}
    and that the subgraph~$R^* = R-\SMALLDEG_R(\alpha)$ satisfies~\ref{en:halfcov} and ~\ref{en:connected} for every~$A\subseteq V(R^*)$ with~$|A|\leq \alpha r_1/100$. In the claim below, we show that this condition holds for a particular choice of $A$.        

    \begin{claim}\label{clm:GoodNeighbour}
        Set~$A := \{ i\in V(R^*)\colon |V_i\cap V(\cP)| \geq \beta |V_i|\}$.
        Then the set~$A$ satisfies properties~\ref{en:halfcov} and ~\ref{en:connected} from Proposition~\ref{prop:clustergraph}.
        Moreover, for every vertex~$v\in X$ there exists an index~$i_v\in V(R^*)\setminus A$ such that~
        $$|(N_G(v) \cap V_{i_{v}})\setminus V(\cP)| \geq \frac{\alpha}{2}|V_{i_v}|\;.$$
    \end{claim}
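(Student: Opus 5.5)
The claim has two parts, and I will handle them separately: a size bound on $A$, and a degree-transfer argument for vertices of $X$.

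\emph{Size of $A$.} It suffices to show $|A|\le \alpha r_1/100$; then Proposition~\ref{prop:clustergraph}, applied as above, yields properties~\ref{en:halfcov} and~\ref{en:connected} for $R^*-A$. Double count the vertices of $V(\cP)$. Each $i\in A$ contributes at least $\beta|V_i|\ge \beta n/(2r_1)$ of them, and the clusters are disjoint. So
\[
|A|\cdot\frac{\beta n}{2r_1}\le |V(\cP)|<\alpha^2 n
\]
by~\ref{it:proplowdeg}\ref{it:boundvrtP}. This gives $|A|<2\alpha^2 r_1/\beta\le \alpha r_1/100$, because $\alpha\ll\beta$ in~\eqref{eq:mainhierarchy}.

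\emph{The neighbour cluster $i_v$.} Fix $v\in X$, so $\deg_G(v)\ge\alpha n$. I will subtract from $N_G(v)$ everything that lies outside the allowed region and check that many neighbours remain. The subtracted parts are:
\begin{itemize}
\item the vertices of $V(\cP)$, fewer than $\alpha^2 n$;
\item the clusters indexed by $\SMALLDEG_R(\alpha)$, at most $\alpha\beta r_1\cdot 2n/r_1=2\alpha\beta n$ vertices by~\eqref{eq:smalldegalpha};
\item the clusters indexed by $A$, at most $(\alpha r_1/100)(2n/r_1)=\alpha n/50$ vertices.
\end{itemize}
What remains has size at least $\alpha n(1-\alpha-2\beta-1/50)\ge 0.9\alpha n$, using $\beta\ll1$. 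These remaining neighbours avoid $V(\cP)$ and lie in clusters $V_i$ with $i\in V(R^*)\setminus A$. There are at most $r_1$ such clusters, so by pigeonhole one of them, $V_{i_v}$, contains at least $0.9\alpha n/r_1\ge \frac{\alpha}{2}|V_{i_v}|$ of them. Here I use $|V_i|\le n/r_1+1$ and that $n$ is large.

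The statement should also hold for all $v\in X$, including $v\in V(\cP)$ such as path endpoints. The argument above does not use $v\notin V(\cP)$, so nothing changes for those vertices.

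I expect no real obstacle here. The only points needing care are the constants: we need $\alpha^2/\beta\ll\alpha$, and $\beta$ must be small enough that the discarded clusters cost only a small fraction of $\alpha n$.
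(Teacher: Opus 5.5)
Your proof is correct and follows essentially the same route as the paper. Like the paper, you bound $|A|$ by double counting the vertices of $V(\cP)$ against the $\beta$-fraction each cluster in $A$ must contain, then remove from $N_G(v)$ the vertices of $V(\cP)$ and the clusters indexed by $\SMALLDEG_R(\alpha)\cup A$, and finish by averaging over the remaining clusters. Your explicit use of $n/(2r_1)\le |V_i|\le 2n/r_1$ is, if anything, slightly more careful about cluster sizes than the paper's implicit $|V_i|\approx n/r_1$.
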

    \begin{claimproof}
    Observe that we have~$|A| \leq \frac{|V(\cP)|}{\beta n}r_1 \leBy{\ref{it:proplowdeg}\ref{it:boundvrtP}} \frac{\alpha^2 r_1}{\beta}<\frac{\alpha r_1}{100}$. Therefore, Proposition~\ref{prop:clustergraph} applies.
    For the moreover part of the claim, let~$v\in X$ and note that by also using~\ref{it:proplowdeg}\ref{it:boundvrtP} and~\eqref{eq:smalldegalpha} we have
    \begin{align*}
        \Big|
        \Big(
        N_G(v)\cap \bigcup_{i\in V(R^*)\setminus A} V_i
        \Big)
        \setminus V(\cP)
        \Big|
        &\geq 
        \deg_G(v) - |V(\cP)| - \frac{n}{r_1}\Big(|\SMALLDEG_R(\alpha)| + |A|\Big)\\
        &\geq \alpha n - \alpha^2 n - \Big(\alpha\beta + \frac{\alpha^2}{\beta}\Big)n
        \geBy{\eqref{eq:mainhierarchy}} 
        \frac{\alpha n}{2}\,.
    \end{align*}
    Thus, using an average argument, there is an index~$i_v\in V(R^*)\setminus A$ satisfying the conclusion of the claim.\end{claimproof}

    For every~$i\in V(R^*)\setminus A$ set~$V_i' := V_i\setminus V(\cP)$ and
    $V' := \bigcup_{i\in V(R^*)\setminus A} V_i'$.
    Notice that by Fact~\ref{fact:regpairinherit} we have that~$\{V_i'\}_{i\in V(R^*)\setminus A}$ is an~$(\eps_1/(1-\beta))\leq 2\eps_1$-regular partition of~$G[V']$ (where the classes might have different sizes up to a factor of~$1\pm \beta$). Let~$\widetilde R$ be the reduced graph defined by~$V(\widetilde R) := V(R^*)\setminus A$ and with edges given by pairs~$i,j\in V(\widetilde R)$ where~$(V_i',V_j')$ forms an~$2\eps_1$-regular pair with density at least~$d_1/2$. 
    By Fact~\ref{fact:regpairinherit} we have that if a pair~$(V_i,V_j)$ is~$\eps_1$-regular with density at least~$d_1$, then~$(V_i',V_j')$ is~$2\eps_1$-regular with density at least~$d_1/2$.
    In other words,~$R^*-A$ is a subgraph of~$\widetilde R$. 
    
    We are now ready to show the properties needed for applying Lemma~\ref{lem:abs}.
    Due to~\ref{en:connected}, the graph $R^*-A$ is connected. 
    Further, due to~$\ref{en:halfcov}$ and Lemma~\ref{lem:OurCombOpt}\ref{en:OurCombOptNonBip}, the graph $R^*-A$ is not bipartite. 
    Therefore, Lemma~\ref{lem:path-oddcycle} yields an odd integer~$L\leq 2r_1$ for which there are ~$\zeta n^{L+1}$ paths of length~$L$ in~$G[V']$ between every two sets of vertices~$Y\subseteq V_i'$ with~$|Y|\geq d_1|V_i'|/4$ and~$Z\subseteq V_j'$ with~$|Z|\geq d_1|V_j'|/4$. 
    Thus, Claim~\ref{clm:GoodNeighbour} entails that for any two vertices~$u,v\in X$, there are that many paths of length~$L$ between~$N_G(v)\cap V_{i_v}'$ and~$N_G(u)\cap V_{i_u}'$. 
    Hence,
    \begin{equation}\label{eq:Gconn}
        \text{$G$ is~$(\zeta, X, X\setminus V(\cP), L+1)$-connected,}
    \end{equation}
    which yields that~$G$ is~$(\zeta, X\setminus V(\cP),X\setminus V(\cP), L+1)$-connected as well.
    Similarly, for every~$v\in X$ there are at least~$\zeta n^{L+1}$ paths of length~$L$ in~$G[V']$, ending and starting in~$N_G(v)\cap V_{i_v}'$. 
    Hence, this yields as many cycles of length~$L+2$ in~$X\setminus V(\cP)$, containing~$v$. 
    
    Putting all this together we can apply Lemma~\ref{lem:abs} (with $X\setminus V(\cP)$ in place of $U$) to find an~$(X\setminus V(\cP),\gamma^2)$-absorbing path~$\cQ$ with
    \begin{equation}\label{eq:sizqQ}
    |V(\cQ)|\leq\gamma n\;.
    \end{equation}
    
   \subsubsection*{Step 3: Choosing a reservoir for connections}
We begin with a claim that provides many connections between any two vertices in $X$.
\begin{claim}\label{cl:pairsconnection}
    For every pair of distinct vertices $u,v\in X$, there are at least $4\rho n$ internally disjoint paths from $u$ to $v$ of length $L+1$ whose internal vertices are in $X\setminus (V(\cP) \cup V(\cQ))$.
\end{claim}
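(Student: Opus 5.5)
Fix distinct $u,v\in X$. I would follow the argument behind \eqref{eq:Gconn} in Step~2, now also avoiding $V(\cQ)$, and then extract internally disjoint paths greedily. By Claim~\ref{clm:GoodNeighbour} there are indices $i_u,i_v\in V(R^*)\setminus A$ with $|N_G(u)\cap V_{i_u}'|\ge\frac\alpha2|V_{i_u}|$ and likewise for $v$. By \eqref{eq:sizqQ}, $|V(\cQ)|\le\gamma n$, and $\gamma\ll d_1,\alpha,1/r_1$ by \eqref{eq:mainhierarchy}, so removing $V(\cQ)$ from each cluster changes it only negligibly. In particular $Y:=(N_G(u)\cap V_{i_u}')\setminus V(\cQ)$ still has $|Y|\ge\frac\alpha4|V_{i_u}'|\ge d_1|V_{i_u}'|/4$, and similarly for $Z$ defined from $v$ in $V_{i_v}'$.

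Rather than re-running Lemma~\ref{lem:path-oddcycle} on the sets $V_i'\setminus V(\cQ)$, I would simply subtract. Lemma~\ref{lem:path-oddcycle} (as already applied in Step~2) gives at least $\zeta n^{L+1}$ paths of length $L$ in $G[V']$ from $Y$ to $Z$. A path meeting $V(\cQ)$ is determined by choosing one of its $L+1$ positions, a vertex of $V(\cQ)$ for it, and the remaining $L$ vertices, so there are at most $(L+1)\gamma n\cdot n^{L}$ such paths. Since $\gamma\ll\zeta\ll 1/r_1$ and $L\le 2r_1$, this is at most $\zeta n^{L+1}/2$. Hence at least $\frac\zeta2 n^{L+1}$ paths of length $L$ from $Y$ to $Z$ avoid $V(\cQ)\cup V(\cP)$ and also avoid $u,v$ (paths through $u$ or $v$ cost only $O(n^L)$ more). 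Prepending $u$ and appending $v$ turns each into a $u$--$v$ path whose $L+1$ internal vertices lie in $X\setminus(V(\cP)\cup V(\cQ))$, because $V'\cap V(\cP)=\emptyset$.

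To get internally disjoint paths, I would choose them greedily. Suppose $k<4\rho n$ paths have been chosen. Their internal vertices form a set $B$ with $|B|\le 4\rho n(L+1)$. The number of paths in our family that meet $B$ is at most $(L+1)|B|n^{L}\le 4\rho(L+1)^2n^{L+1}$. Since $\rho\ll\zeta,1/r_1$, this is less than $\frac\zeta2 n^{L+1}$, so a fresh path disjoint from $B$ always exists. We can therefore continue until $4\rho n$ paths are chosen.

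The only delicate point is the constant bookkeeping: the losses from $V(\cQ)$ and from previously chosen paths both carry factors polynomial in $L\le 2r_1$. These are absorbed because $\rho\ll\gamma\ll\zeta\ll 1/r_1$ in \eqref{eq:mainhierarchy}. No new regularity input is needed.
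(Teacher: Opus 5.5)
Your proposal is correct and follows the paper's proof. The paper likewise first notes, from \eqref{eq:Gconn} and $|V(\cQ)|\le\gamma n$, that $G$ is $(\zeta/2, X, X\setminus(V(\cP)\cup V(\cQ)), L+1)$-connected, and then takes a maximal family of internally disjoint $u$--$v$ paths, bounding the number of paths through already-used vertices by $4(L+3)^2\rho n^{L+1}<\zeta n^{L+1}/2$; your greedy extraction is the same argument, and like the paper's proof it produces paths with $L+1$ internal vertices (length $L+2$ rather than the stated $L+1$), which is what Claim~\ref{cl:RRR} actually uses.
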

\begin{claimproof}
Recall that $L\le 2r_1$ and therefore~$\rho\ll\gamma\ll \zeta\ll 1/L$ by~\eqref{eq:mainhierarchy}. Hence we get from~\eqref{eq:Gconn} and~\eqref{eq:sizqQ} that
\begin{equation}\label{eq:GGconn}
\text{$G$ is~$(\zeta/2,X, X\setminus (V(\cP) \cup V(\cQ)),L+1)$-connected}.    
\end{equation}
    Now, given vertices~$u,v\in X$, let~$\cB$ be the largest family of internally disjoint paths of length~$L+2$ from~$u$ to~$v$ whose internal vertices are in $X\setminus (V(\cP) \cup V(\cQ))$.
    Suppose for a contradiction that~$|\mathcal B|<4\rho n$. 
    Since~$\cB$ is maximal, all other paths between~$u$ and~$v$ contain at least one vertex in~$V(\cB)$. 
    Since~$|V(\cB)|\leq 4(L+3)\rho n$ the maximum number of paths connecting $u$ and $v$ and containing a vertex in~$V(\cB)$ is at most~$4(L+3)^2\rho n^{L+1}< \zeta n^{L+1}/2$. 
    This is a contradiction to~\eqref{eq:GGconn}. 
\end{claimproof}
The next claim provides a small reservoir set through which every pair of vertices from $X$ can be connected sufficiently many times.
\begin{claim}\label{cl:RRR} 
There exists a set $S\subseteq X\setminus (V(\cP) \cup V(\cQ))$ such that
\begin{enumerate}[(a)]
    \item\label{en:BodA} $|S|\leq 2\rho n$ and
    \item\label{en:BodB} for every pair of distinct vertices $u,v\in X$, there are at least $\rho^{L+2} n$ internally disjoint paths from $u$ to $v$ in $G$ whose internal vertices are contained in $S$.
\end{enumerate}
\end{claim}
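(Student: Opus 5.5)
We choose $S$ at random: include each vertex of $X\setminus(V(\cP)\cup V(\cQ))$ in $S$ independently with probability $p:=\rho$. Part~\ref{en:BodA} then follows from the Chernoff bound (Lemma~\ref{lem:Chernoff}\ref{it:Chernoff2} with $C=2$). The expected size of $S$ is at most $\rho n$, so $|S|\le 2\rho n$ fails with probability at most $\exp(-\Omega(\rho n))$.

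For part~\ref{en:BodB}, fix a pair $u\neq v$ in $X$. By Claim~\ref{cl:pairsconnection} there is a family $\cB_{uv}$ of at least $4\rho n$ internally disjoint $u$--$v$ paths of length $L+1$. Each has exactly $L$ internal vertices, all in $X\setminus(V(\cP)\cup V(\cQ))$. A fixed path of $\cB_{uv}$ survives, meaning all its internal vertices land in $S$, with probability $p^{L}=\rho^{L}$. Because the paths are internally disjoint, these survival events are independent. So the number $N_{uv}$ of surviving paths stochastically dominates a binomial random variable with parameters $4\rho n$ and $\rho^{L}$, whose mean is $4\rho^{L+1}n\ge 4\rho^{L+2}n$. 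Lemma~\ref{lem:Chernoff}\ref{it:Chernoff1} gives
\[
\Probability\big[N_{uv}<\rho^{L+2}n\big]\le \Probability\big[N_{uv}\le 2\rho^{L+1}n\big]\le \exp(-\rho^{L+1}n/2)\;.
\]
The surviving paths stay internally disjoint, and their internal vertices lie in $S$, as required.

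We finish with a union bound over the fewer than $n^2$ pairs $u,v$. The total failure probability is at most $n^2\exp(-\rho^{L+1}n/2)+\exp(-\Omega(\rho n))$, which is $o(1)$. Since $L\le 2r_1$ and $1/n\ll\rho\ll 1/r_1$ by~\eqref{eq:mainhierarchy}, the quantity $\rho^{L+1}$ is a constant independent of $n$, so this term indeed tends to $0$. Hence some outcome of $S$ satisfies both \ref{en:BodA} and~\ref{en:BodB}.

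The only step needing care is the independence used in the Chernoff bound. It holds because the paths in each $\cB_{uv}$ are internally disjoint, and we restrict attention to their internal vertices only, never to the endpoints $u,v$, which need not lie in $S$. The rest is routine bookkeeping with the hierarchy; the conclusion even holds with the stronger bound $\rho^{L+1}n$, so the stated $\rho^{L+2}n$ leaves room to spare.
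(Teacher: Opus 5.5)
Your argument is correct and is essentially the paper's proof: you take a $\rho$-random subset of $X\setminus(V(\cP)\cup V(\cQ))$, get independence of the survival events from internal disjointness, apply Lemma~\ref{lem:Chernoff}\ref{it:Chernoff1} to each pair, and finish with a union bound over pairs. Two small fixes are needed. First, for (a) use Markov's inequality as the paper does, since Lemma~\ref{lem:Chernoff}\ref{it:Chernoff2} with $C=2$ is vacuous ($\ln(2/e)<0$). Second, the $u$--$v$ paths coming from the $(\zeta,X,X\setminus V(\cP),L+1)$-connectivity have $L+1$ internal vertices (the ``length $L+1$'' in Claim~\ref{cl:pairsconnection} should read $L+2$), so each survives with probability $\rho^{L+1}$ and the mean is $4\rho^{L+2}n$; $\rho^{L+2}n$ is therefore exactly what the argument yields, and your remark about the stronger bound $\rho^{L+1}n$ should be dropped.
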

\begin{claimproof}
The proof is using the probabilistic method. Take a random subset of vertices~$S\subseteq X\setminus (V(\cP) \cup V(\cQ))$ by independently including each vertex with probability~$\rho$.
By Markov's inequality we have that \ref{en:BodA} is satisfied with probability at least $1/2$.
For~\ref{en:BodB}, take a pair of vertices $u,v$ as above. Let $\mathcal{F}_{u,v}$ be a family of at least $4\rho n$ internally disjoint paths from $u$ to $v$, as provided by Claim~\ref{cl:pairsconnection}. The internal vertices of one path in $\mathcal{F}_{u,v}$ are entirely contained in $S$ with probability $\rho^{L+1}$. So, the expected number of paths from $\mathcal{F}_{u,v}$ whose internal vertices are entirely contained in $S$ is at least $4\rho n\cdot \rho^{L+1}$. 
Further, since those paths are internally disjoint, their containment events are independent. Hence, we can apply Theorem~\ref{lem:Chernoff}~\ref{it:Chernoff1}, and get that with probability at least $1-\exp(-\Theta(n))$, the number of paths from $\mathcal{F}_{u,v}$ whose internal vertices are entirely contained in $S$ is at least $\rho^{L+2}n$. 
We apply the union bound over the at most $n^2$ choices of pairs $(u,v)$ to obtain that~\ref{en:BodB} holds with probability at least $1-o(1)$. 
In total, $S$ satisfies all the required properties with positive probability and we can choose such an outcome of the random selection.
 \end{claimproof}

    \subsubsection*{Step 4: Constructing an almost spanning system of paths}
In this step we find a collection of paths that covers all but at most~$\gamma^2n$ many vertices from~$X$.
Throughout the proof, we need to treat the case when $V(\cP)$ is tiny and when it is substantial slightly differently. We call these two cases $(\heartsuit 1)$ and $(\heartsuit 2)$.
\begin{itemize}
    \item \emph{Case~$(\heartsuit 1)$:} We have $|V(\cP)|<2\eps_2 n$.\\
    Set~$G_0 := G[V\setminus (V(\cP)\cup V(\cQ)\cup S)]$. We apply Theorem~\ref{thm:reglem} to the graph~$G_0$ with the trivial prepartition $V(G_0):= \emptyset \sqcup V(G_0)$ and with error parameter~$\eps_2$. We obtain an~$\eps_2$-regular partition~$U_1,\dots, U_{r_2}$.
    \item \emph{Case~$(\heartsuit 2)$:} We have $|V(\cP)|\ge 2\eps_2 n$.\\
    Set~$G_0 := G[V\setminus (V(\cQ)\cup S)]$. 
    We apply Theorem~\ref{thm:reglem} to the graph~$G_0$ with a prepartition $V(G_0):= V(\cP) \sqcup (V(G_0)\setminus V(\cP))$ and with error parameter~$\eps_2$. We obtain an~$\eps_2$-regular partition~$U_1,\dots, U_{r_2}$.
\end{itemize}
Observe that in both $(\heartsuit 1)$ and $(\heartsuit 2)$, we have
    \begin{align}\label{eq:Y-partition}
        U_i\subseteq V(\cP) 
        \qquad\text{or}\qquad
        U_i\subseteq V\setminus (V(\cP)\cup V(\cQ)\cup S)
    \end{align}
    for every~$i\in [r_2]$.

Since the cluster sizes differ by at most~1 (c.f.~\ref{P:2}), we can find an integer $k$ so that for every~$i\in [r_2]$ we have
\begin{equation}\label{eq:keven}
2k\le |U_i|\le 2k+2\;.    
\end{equation}
    Let~$G_0'$ be obtained from~$G_0$ by removing all edges~$uv\in E(G_0)$ with
    \begin{itemize}
        \item $u,v\in U_i$ for some~$i\in [r_2]$, and
        \item $u\in U_i$,~$v\in U_j$, and~$(U_i,U_j)$ is not~$\eps_2$-regular, or
        \item $u\in U_i$,~$v\in U_j$, and~$(U_i,U_j)$ has density smaller than~$d_2$. 
    \end{itemize}
    Then by Theorem~\ref{thm:reglem}, for every vertex~$v\in V(G_0)$,
    \begin{align}\label{eq:deg-partition}
        \deg_{G_0}(v) < \deg_{G_0'}(v) + 2d_2n\;.
    \end{align}
    Define the reduced graph~$R_2$ on $[r_2]$ as before, by putting an edge between~$i$ and $j$ if the pair of vertex classes~$(U_i,U_j)$ is~$\eps_2$-regular with density at least~$d_2$.

    By~\ref{it:cutnorm} we have $\cutn{W_G-W}\leq\eps_2/2$.    
    Next, we use Lemma~\ref{lem:delete} to find a graphon representation $W_{G_0}$ of $G_0$ with $\cutn{W_{G_0}-W_G}\le 5\gamma$, as follows. If we have~$(\heartsuit 2)$, then we use that~$|S\cup V(\cQ)|\leq 2\gamma n $, and hence there exists a graphon representation $W_{G_0}$ such that $\cutn{W_{G_0}-W_G}\leq 4\gamma$. If we have~$(\heartsuit 1)$, then we use that~$|V(\cP)\cup S\cup V(\cQ)|< 2\gamma n +2\eps_2 n$, and hence there exists a graphon representation $W_{G_0}$ such that $\cutn{W_{G_0}-W_G}\leq 4\gamma+4\eps_2$.
    
    By Fact~\ref{fact:distredgraph} there exists a graphon representation $W_{R_2}$ such that $\cutn{W_{R_2}-W_{G_0}}\leq d_2+2\eps_2$. Altogether this gives
    $$\cutn{W_{R_2}-W}\leq\cutn{W_{R_2}-W_{G_0}}+\cutn{W_{G_0}-W_G}+\cutn{W_G-W}\leq d_2+2\eps_2+5\gamma+\eps_2/2 <\eps_1\;.$$
    Hence, we can apply Proposition~\ref{prop:clustergraph} with~$\beta$ instead of~$\gamma$ and with~$C=\tfrac{1}{100\beta}$ on $R_2$ and $W$. 
    In particular, the subgraph~$R_2^* = R_2-\SMALLDEG_{R_2}(\alpha)$ satisfies~\ref{en:size}--\ref{en:connected} for every~$A_2\subseteq V(R_2^*)$ with~$|A_2|\leq \alpha r_2/100$. Set
    $$A_2:=\{i\in V(R_2^*) \colon U_i\subseteq  V(\cP) \}\,.$$ 
    (In~$(\heartsuit 1)$, we of course have $A_2=\emptyset$.)
    Due to~\eqref{eq:Y-partition} if~$i\notin A_2$ then~$U_i\subseteq V\setminus (V(\cQ)\cup S\cup V(\cP))$.
    We have~$|A_2|\leq \tfrac{|V(\cP)|}{n}r_2 \leBy{\ref{it:proplowdeg}\ref{it:boundvrtP}} \alpha^2r_2\leq \tfrac{\alpha r_2}{100}$, meaning that the set $A_2$ satisfies the size condition of Proposition~\ref{prop:clustergraph}.
    Therefore, due to~\ref{en:halfcov},~$R_2^*-A_2$ is uniquely half-covered and Lemma~\ref{lem:OurCombOpt}\ref{en:OurCombOptPerfect} yields a half-integral perfect matching~$m\colon E(R_2^*-A_2)\to \{0,\frac12,1\}$. 
    Set~$U:=\bigcup_{i\in V(R_2^*)\setminus A_2}U_i$. In the claim below, we find an almost-spanning path system in $G_0[U]$.
    \begin{claim}
     There exists a collection~$\mathcal{F} = \{P_{i,j}\}_{m(ij)>0}$ of vertex-disjoint paths in~$G_0[U]$ consisting of at most~$r_2$ paths, covering all except at most~$\rho n$ many vertices from~$U$.
    \end{claim}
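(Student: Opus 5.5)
The plan is to split each cluster $U_i$, $i\in V(R_2^*)\setminus A_2$, according to the half-integral perfect matching $m$, and then find an almost spanning path in each resulting regular pair. Since $m$ is perfect on $R_2^*-A_2$, every $i$ satisfies $\sum_{j} m(ij)=1$. Hence either there is exactly one $j$ with $m(ij)=1$, or there are exactly two $j,j'$ with $m(ij)=m(ij')=\frac12$.

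\textbf{Splitting the clusters.} Using \eqref{eq:keven}, I discard at most two vertices from each $U_i$ so that $|U_i|=2k$. I then partition the remaining vertices into parts $U_i^{j}$, one for each $j$ with $m(ij)>0$, with $|U_i^j|=2k\cdot m(ij)$. So a part has size $2k$ if $m(ij)=1$ and size $k$ if $m(ij)=\frac12$. This makes $|U_i^j|=|U_j^i|$ for every edge $ij$ in the support of $m$.

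\textbf{Paths in the pairs.} Each such pair $(U_i,U_j)$ is $\eps_2$-regular with density at least $d_2$, and the parts have size at least $|U_i|/3$. So Fact~\ref{fact:regpairinherit}, with $\alpha=\frac13$, makes $(U_i^j,U_j^i)$ a $3\eps_2$-regular pair of density at least $d_2-\eps_2>6\eps_2$. Fact~\ref{fact:almsppathinregp} then yields a path $P_{i,j}$ in $G_0[U_i^j,U_j^i]$ covering all but at most $3\eps_2|U_i^j|$ vertices on each side. The parts are pairwise disjoint, so these paths are vertex-disjoint. Their number equals the size of the support of $m$, which is at most $v(R_2^*-A_2)\le r_2$, because each vertex lies in at most two support edges.

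\textbf{Counting uncovered vertices.} Within $U$ the path system misses at most $2$ vertices per cluster, plus at most $3\eps_2\cdot 2k$ per cluster. Summing over at most $r_2$ clusters gives at most $2r_2+6\eps_2 n\le \rho n$, using $1/n\ll 1/r_2\ll\eps_2\ll\rho$ from \eqref{eq:mainhierarchy}.

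\textbf{Main obstacle.} The delicate point is only the bookkeeping: the two halves of a pair must have equal sizes, and all parts must be large enough for the slicing lemma. Both are settled by trimming every cluster to exactly $2k$ vertices before splitting, which is why the even size $2k$ in \eqref{eq:keven} was arranged.
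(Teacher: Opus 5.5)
Your proposal is correct and follows essentially the same route as the paper. The paper likewise splits each cluster $U_i$ according to the half-integral perfect matching $m$ into one part of size $2k$ or two parts of size $k$ each, leaving at most two vertices aside. It then applies the slicing lemma with ratio $\frac13$ and Fact~\ref{fact:almsppathinregp} to each pair in the support of $m$, and bounds the number of paths by the size of that support. Your bookkeeping (density at least $d_2-\eps_2$, at most $3\eps_2|U_i^j|$ uncovered vertices per side, at most $2r_2+6\eps_2 n\le\rho n$ uncovered in total) is just a slightly more explicit version of the paper's estimate.
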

    \begin{claimproof}
    For every~$i\in V(R_2^*)\setminus A_2$ we find a partition of~$U_i=U_{(i,0)}\sqcup \bigsqcup_{j\in V(R_2^*)\setminus (A_2\cup \{i\})} U_{(i,j)}$, the following way, which depends on how $i$ is covered by $m$:
    \begin{enumerate}[label=(O\arabic*)]
        \item\label{en:MOne} If there exists $j^*\in V(R_2^*)\setminus (A_2\cup \{i\})$ such that $m(ij^*)=1$, then let $U_{(i,j^*)}\subset U_i$ be arbitrary of size $2k$, and for $j\in V(R_2^*)\setminus (A_2\cup \{i,j^*\})$, set $U_{(i,j)}:=\emptyset$. 
        \item\label{en:MHalf} Otherwise (recall that $m$ is perfect and half-integral), there exist two distinct indices $j_1,j_2\in V(R_2^*)\setminus (A_2\cup \{i\})$ such that $m(ij_1)=m(ij_2)=1/2$. Let $U_{(i,j_1)},U_{(i,j_2)}\subset U_i$ be arbitrary disjoint, each of size $k$, and set $U_{(i,j)}:=\emptyset$ for all other indices $j$.
    \end{enumerate}
In either case, define $U_{(i,0)}:=U_i\setminus \bigsqcup_{j\in V(R_2^*)\setminus (A_2\cup \{i\})} U_{(i,j)}$, and note by~\eqref{eq:keven} that 
\begin{equation}\label{eq:Ui0}
    |U_{(i,0)}|\le 2\le \rho|U_i|/2\;.
\end{equation}
Consider an arbitrary edge $ij\in E(R_2)$ such that $m(ij)>0$. Irrespective of whether the pair $(U_{(i,j)},U_{(j,i)})$ was obtained using~\ref{en:MOne} or~\ref{en:MHalf}, it spans at least one third of the pair $(U_i,U_j)$. Since the pair $(U_i,U_j)$ is~$\eps_2$-regular of density at least~$d_2$, by Fact~\ref{fact:regpairinherit}, the pair~$(U_{(i,j)},U_{(j,i)})$ is~$3\eps_2$-regular of density at least $d_2/2$.
By Fact~\ref{fact:almsppathinregp} there is a path~$P_{i,j}\subset G[U_{(i,j)},U_{(j,i)}]$ covering all but at most~$\rho |U_{(i,j)}|/2$ many vertices from~$U_{(i,j)}$ and all but at most $\rho |U_{(j,i)}|/2$ many vertices from~$U_{(j,i)}$.
Taking into account that also the vertices of $U_{(i,0)}$ are not covered by paths, we see using~\eqref{eq:Ui0} that our system of paths covers all but at most $\rho|U_i|$ vertices in each cluster $U_i$. Summing over all $i$, we obtain the bound for the uncovered vertices in the Claim.
Finally, note that the number edges $ij$ with $m(ij)>0$ is at most $r_2$. For each such edge, one path was added to the system. This gives the bound on the number of paths in the Claim.
    \end{claimproof}

    Note that the vertices in~$\bigcup_{i\in A_2\cup \SMALLDEG_{R_2}(\alpha)} U_i$ are not covered by~$\mathcal{F}$. 
    We shall show that they are covered by~$V(\cP)$.\footnote{Note that in case~$(\heartsuit 1)$, this actually means $\bigcup_{i\in A_2\cup \SMALLDEG_{R_2}(\alpha)} U_i=\emptyset$, which is what the argument below indeed gives.}
    First note that directly from the definition of~$A_2$ we have~$\bigcup_{i\in A_2} U_i \setminus V(\cP) =\emptyset$. 
    Secondly, consider an arbitrary $i\in \SMALLDEG_{R_2}(\alpha)$ and a vertex~$v\in U_i$. 
    Due to~\eqref{eq:deg-partition}, we have 
    $$\deg_G(v) < \alpha n + 2d_2 n  < 2\alpha n\,,$$
    which by \ref{it:proplowdeg}\ref{it:main1-lowdeginP} means that~$v$ was already covered by~$\cP$ and therefore~$v\in V(\cP)$.
    Hence,
    \begin{align} \label{eq:smallleftover}
    \begin{split}
              \big|V\setminus (V(\mathcal{F}) \cup V(\cP)\cup V(\cQ)\cup S)\big|
        =|U\setminus V(\mathcal{F})| + \bigg|\bigcup_{i\in A_2\cup\SMALLDEG_{R_2}(\alpha)} U_i\setminus V(\cP)\bigg|
        \leq \rho n
        \;.
    \end{split}
    \end{align}
    In other words the path system~$\cP\cup\mathcal{F}\cup\{Q\}$ covers all of~$V\setminus S$ except for at most~$\rho n$ many vertices, all of them lying in~$X \subseteq V\setminus V(\cP)$.
    
\subsubsection*{Step 5: Connecting all paths and completing the cycle}

    Finally, the path tiling given by~$\cP\cup\mathcal{F} \cup  \{ \cQ\}$ contains at most~$2/\alpha + r_2 +1\leq 2r_2$ many paths.
    By construction of $\mathcal{F}$ and $\cQ$ and by using~\ref{it:proplowdeg}\ref{it:main1-highenddeg} all these paths end in vertices in~$X$. 
    Moreover by Claim~\ref{cl:RRR}\ref{en:BodB}, between every two of these vertices there are at least~$\rho^{L+2}n/2> 2r_2$ disjoint paths connecting them, with all internal vertices contained in~$S$. 
    Thus, we may straightforwardly connect all these paths one by one, into a single cycle~$C$, such that all new vertices are taken from~$S$. 
    In other words, $V(C) = V(\mathcal{F})\cup V(\cP)\cup V(\cQ)\cup S'$, where~$S'$ is a suitable subset of $S$. 
    Due to~\eqref{eq:smallleftover} and Claim~\ref{cl:RRR}\ref{en:BodA}, the number of vertices not covered by~$C$ is at most
    $$\rho n + |S|\leq 3\rho n \leq \gamma^2 n\,.$$
    Since all the uncovered vertices lie in~$X\setminus V(\cP)$, the absorbing property of~$\cQ$ yields a Hamilton cycle in $G$.

\section{Concluding remarks}\label{sec:Concluding}

\subsection{Hamilton cycles with positive probability}

Theorem~\ref{thm:mainHC} identifies conditions~\ref{en:mainHC1}, \ref{en:mainHC2}, and \ref{en:mainHC3} on a graphon $W$ which are sufficient for $\G(n,W)$ to be Hamiltonian asymptotically almost surely. Furthermore, parts~\ref{en:mainHCnegative1}, \ref{en:mainHCnegative2}, and \ref{en:mainHCnegative3} demonstrate that these conditions are necessary. We can refine the negative cases. Specifically, we aim to distinguish between graphons for which $\G(n,W)$ is Hamiltonian asymptotically almost never, and those where the property holds with probability bounded away from 0 and 1. 

Below, we list four conditions, each of which guarantees that $\G(n,W)$ is a.a.s.\ not Hamiltonian. We conjecture that this list characterizes all such strongly negative cases.

\begin{prop}\label{prop:mainHCneg}
Suppose that $W:\Omega^2\to[0,1]$ is a graphon. Then $\G(n,W)$ is asymptotically almost never Hamiltonian if at least one of the following conditions holds:
\begin{enumerate}[label=(\roman*)]
    \item\label{en:mainHCStronglyNegative1} $W$ is not a connected graphon;
    \item\label{en:mainHCStronglyNegative2} $\lim_{\alpha\searrow 0}\frac{\mu(\SMALLDEG_\alpha(W))}\alpha=\infty$;
    \item\label{en:mainHCStronglyNegative3} $W$ has a narrow peninsula;
    \item\label{en:mainHCStronglyNegative4} there exists a partition $\Omega=S\sqcup T$ with $\mu(S)=\mu(T)=\frac12$ such that $W$ is zero almost everywhere on $(S\times S) \cup (T\times T)$.
\end{enumerate}
\end{prop}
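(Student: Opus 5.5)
We treat the four conditions separately; in each case we exhibit an obstruction to Hamiltonicity that a.a.s.\ appears in $\G(n,W)$.

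\emph{Condition~\ref{en:mainHCStronglyNegative1}.} The argument of Section~\ref{ssec:Negative1} applies verbatim. A.a.s.\ both $V_S$ and $V_T$ are nonempty, and almost surely there is no edge between them. So $\G(n,W)$ is a.a.s.\ disconnected, and in particular not Hamiltonian.

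\emph{Condition~\ref{en:mainHCStronglyNegative2}.} Here we upgrade the positive-probability statement of Section~\ref{ssec:Negative2} to an a.a.s.\ statement. Fix a large $K$ and set $\alpha:=K/n$. Condition~\ref{en:mainHCStronglyNegative2} gives $\mu(\SMALLDEG_W(\alpha))\ge M\alpha$ with $M=M(n)\to\infty$. Then the number $N$ of vertices with type in $\SMALLDEG_W(\alpha)$ is binomial with mean at least $MK\to\infty$, so a.a.s.\ $N\ge MK/2$.

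Conditioned on the types, each such vertex has degree dominated by $\mathrm{Bin}(n,K/n)$, so it has degree at most $1$ with probability at least $c_K:=e^{-K}(1+K)/2>0$. These events are not quite independent, since edges among the low-degree vertices are shared. To handle this, we restrict to a subset of the low-degree vertices of size $m\to\infty$ with $m\ll n/K$. A.a.s.\ this subset spans no edges at all. Given that, the degrees into the rest of the graph are conditionally independent binomials. Hence the probability that none of these vertices has degree at most~$1$ is at most $(1-c_K)^m+o(1)\to 0$. A vertex of degree at most~$1$ rules out a Hamilton cycle.

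\emph{Condition~\ref{en:mainHCStronglyNegative3}.} Take $A,B$ with $\mu(A)>a$ and $\mu(B)=1-2a$, and set $C=\Omega\setminus(A\cup B)$, so $\mu(C)=a'<a<\mu(A)$. By the law of large numbers, a.a.s.\ $N_A>N_C$. The function $f$ from Section~\ref{ssec:Negative3} is then a fractional vertex cover of weight $\tfrac12(n+N_C-N_A)<\tfrac n2$. Hence there is no perfect fractional matching, and so no Hamilton cycle. This case is easy because the gap is linear in $n$, not a CLT fluctuation.

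\emph{Condition~\ref{en:mainHCStronglyNegative4}.} This is the main obstacle. Here $\G(n,W)$ is bipartite with classes $V_S,V_T$, and a Hamilton cycle forces $N_S=N_T=n/2$. But $N_S\sim\mathrm{Bin}(n,\tfrac12)$, and $\Probability[N_S=n/2]=O(n^{-1/2})\to 0$ by the local limit theorem, or directly via Stirling. So a.a.s.\ the bipartite graph is unbalanced and cannot be Hamiltonian. Almost-sure bipartiteness holds because every edge inside $V_S$ or inside $V_T$ appears with probability $0$. The only care needed is this; no LP duality is required, since the obstruction is simply the parity/balance constraint for bipartite Hamiltonian graphs.

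Overall, the delicate step is~\ref{en:mainHCStronglyNegative2}, because of the dependence between the degrees of different low-degree vertices. The restriction to a small edgeless subset is the device I would use first to obtain conditional independence.
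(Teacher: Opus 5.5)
Your arguments for (i), (iii) and (iv) are correct and coincide with the paper's sketch. These are, respectively: disconnection; the half-integral cover $f$ of weight $\frac12(n+N_C-N_A)$, which is below $\frac n2$ a.a.s.\ since $\mu(C)=2a-\mu(A)<\mu(A)$; and the a.a.s.\ unbalanced bipartition. For (ii) the paper does not argue directly but cites Theorem~3(a) of the connectivity paper.

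Your self-contained replacement for (ii) has a genuine gap at the step ``given that, the degrees into the rest of the graph are conditionally independent binomials''. There are two ways to read the conditioning, and neither works:
\begin{itemize}
\item If you condition only on the types of your $m$ chosen vertices, each degree is binomial. However, different degrees are correlated through the shared random types of the outside vertices.
\item If you condition on all types, the degrees become independent. However, they are no longer dominated by $\mathrm{Bin}(n,K/n)$, because $\sum_j W(X_i,X_j)$ is bounded by $K$ only in expectation. So the per-vertex bound $c_K$ is lost.
\end{itemize}

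The correlation can be total. Let $x\in[0,\frac12]$ be adjacent exactly to $[\frac12,\frac12+x^2]$, and put $W\equiv1$ on $[\frac12,1]^2$. Then $\mu(\SMALLDEG_W(\alpha))\ge\sqrt\alpha$, so (ii) holds. Yet the vertices with $X_i^2\in[\frac K{2n},\frac Kn]$ form an edgeless set of size $\Theta(\sqrt{Kn})$ inside your low-degree class. Every member of this set is adjacent to every vertex with type in $[\frac12,\frac12+\frac K{2n}]$. Hence all of them have degree at least $2$ with probability about $1-e^{-K/2}(1+K/2)$, which is not $o(1)$. Nothing in your argument excludes such a choice of the $m$ vertices, so the bound $(1-c_K)^m+o(1)$ is unjustified.

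There is also a smaller slip. Two vertices with types in $D=\SMALLDEG_W(K/n)$ are adjacent with probability up to $K/(n\mu(D))\le 1/M$, not $K/n$. So edgelessness needs $m=o(\sqrt M)$, not $m\ll n/K$.

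The repair is to take $K$ small rather than large; then no independence between different vertices is needed.
\begin{itemize}
\item If $\mu(\{\deg_W=0\})>0$, a.a.s.\ some vertex has such a type and is almost surely isolated. So assume otherwise; then $\mu(D)\to0$ below.
\item Fix $\eps>0$ and let $D=\SMALLDEG_W(\eps/n)$. Since $n\mu(D)\to\infty$, a.a.s.\ some vertex has type in $D$.
\item Let $i^*$ be the first such vertex and integrate out the other types. The edges at $i^*$ are then independent events of total mean at most $\eps/\mu(\Omega\setminus D)=O(\eps)$, so $i^*$ is isolated with probability $1-O(\eps)$.
\end{itemize}
As $\eps$ is arbitrary, $\G(n,W)$ a.a.s.\ has an isolated vertex.
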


\begin{conj}
The list in Proposition~\ref{prop:mainHCneg} is complete. That is, if a graphon $W$ does not satisfy any of the conditions~\ref{en:mainHCStronglyNegative1}--\ref{en:mainHCStronglyNegative4}, then 
\[
\limsup_{n\to\infty}\Probability[\text{$\G(n,W)$ is Hamiltonian}]>0.
\]
\end{conj}

We briefly sketch the justification for Proposition~\ref{prop:mainHCneg}. The validity of condition~\ref{en:mainHCStronglyNegative1} is discussed in Section~\ref{ssec:Negative1}. Condition~\ref{en:mainHCStronglyNegative2} follows from Theorem~3(a) in~\cite{ConnectivityPaper}, which establishes that such graphons yield disconnected random graphs a.a.s.

For condition~\ref{en:mainHCStronglyNegative3}, the argument is analogous to, but simpler than, the one presented in Section~\ref{ssec:Negative3}. Suppose $(A,B)$ is a witness that $W$ has a narrow peninsula (recall Definition~\ref{defi:peninsula}\ref{en:defiPen2}). By the Law of Large Numbers, after the vertex-generating stage~\ref{G1}, the partition of the vertex set $V=V_A\sqcup V_B\sqcup V_{\mathrm{rest}}$ (induced by types in $A$, $B$, or $\Omega\setminus (A\cup B)$) satisfies $|V_A|=(\mu(A)+o(1))n$ and $|V_B|=(\mu(B)+o(1))n$ a.a.s.
Since $W$ vanishes on $A \times (A \cup B)$, in the edge-generating step~\ref{G2}, no edges are inserted between $V_A$ and $V_A\cup V_B$ a.a.s.
Consider the function $f: V \to \{0, \frac{1}{2}, 1\}$ defined by
\[
f(v) = \begin{cases} 
0 & \text{if } v \in V_A, \\
\frac{1}{2} & \text{if } v \in V_B, \\
1 & \text{if } v \in V_{\mathrm{rest}}.
\end{cases}
\]
Obviously, this function $f$ constitutes a half-integral vertex cover. Its total weight is
\[
\|f\|_1 = \frac{1}{2}|V_B| + |V_{\mathrm{rest}}| = \left(\frac{1}{2}\mu(B) + 1 - \mu(A) - \mu(B) + o(1)\right)n\;.
\]
Using the peninsula parameters $\mu(B)=1-2a$ and $\mu(A)>a$, a straightforward calculation shows that $\|f\|_1 = (\frac{1}{2} - (\mu(A)-a) + o(1))n<\frac{n}2$. Consequently, the random graph does contain a fractional perfect matching (and thus no Hamilton cycle) a.a.s.

Finally, regarding condition~\ref{en:mainHCStronglyNegative4}, note that $|V_S| \sim \mathrm{Bin}(n, 1/2)$. By the Central Limit Theorem, the probability that $|V_S| = |V_T| = n/2$ tends to 0. Thus, a.a.s., the vertex set contains an independent set (either $V_S$ or $V_T$) of size strictly greater than $n/2$, and hence is not Hamiltonian.

\subsection{Other spanning structures in $\G(n,W)$}
We can study the appearance of other spanning structures than a Hamilton cycle in $\G(n,W)$. Perhaps the closest one is a perfect matching, by which we mean --- in order to avoid a parity issue --- a matching covering at least $n-1$ vertices. Are all the conditions on $W$ in Theorem~\ref{thm:mainHC} necessary for $\G(n,W)$ to contain a.a.s.\ a perfect matching? Conditions~\ref{en:mainHC2} and~\ref{en:mainHC3} clearly are, as the features given in~\ref{en:mainHCnegative2} and~\ref{en:mainHCnegative3} are clear obstructions to the property of having a perfect matching, too. 

Less naturally, Condition~\ref{en:mainHC1} is also necessary. Indeed, if $W$ is not connected, then we can use a witness of its disconnectedness $\Omega=S\sqcup T$ to partition the vertices of $\G(n,W)$ into two types, $V=V_S\sqcup V_T$, and we know that each edge is contained entirely in $V_S$ or in $V_T$, a.a.s. The random variable $|V_S|$ has distribution $\mathrm{Bin}(n, \mu(S))$. It is easy to check that when $n$ is even, with probability $\frac12+o(1)$, both $|V_S|$ and $|V_T|$ are odd. Obviously, $\G(n,W)$ does not contain a perfect matching in that case.

The next natural spanning structure one might study is the square (or higher powers, say the $k$-th power) of a Hamilton cycle. Here, even the homogeneous (Erd\H{o}s--R\'enyi) case is very complicated, with thresholds determined only partially (for squares of Hamilton cycles in~\cite{MR4273128}, up to a constant, and for $k\ge 4$ in~\cite{makai2025sharpthresholdshigherpowers}). In any counterpart of Theorem~\ref{thm:mainHC} for higher powers, we are likely to need to find the correct counterpart only to condition~\ref{en:mainHC3}. Indeed, it was shown in Theorem~4 in~\cite{ConnectivityPaper} that conditions~\ref{en:mainHC1} and~\ref{en:mainHC2} in fact guarantee higher vertex connectivity, which we need to have in the presence of a higher power of a Hamilton cycle. 

That is, while~\ref{en:mainHC3} stems from the fact that a Hamilton cycle contains a perfect matching (i.e., a perfect tiling by copies of $K_2$), the sought counterpart will be based on the fact that the $k$-th power of a Hamilton cycle contains a perfect tiling by copies of $K_{k+1}$.\footnote{A \emph{tiling by copies of $K_{k+1}$} is a collection of vertex-disjoint copies of $K_{k+1}$.} Paper~\cite{MR4186624} develops a graphon theory for tilings. Similarly to the graphon theory of matchings (recall the beginning of Section~\ref{ssec:fractionalvertexcoversgraphons}), the dual approach turns out to be cleaner. In particular, paper~\cite{MR4186624} introduces the notion of a \emph{$K_{k+1}$-fractional cover of $W$}, which is any measurable function $f:\Omega\to [0,1]$ such that for $\mu^{k+1}$-almost every $(k+1)$-tuple $(x_1,\dots,x_{k+1})\in \Omega^{k+1}$ we have that 
$ \sum_{i=1}^{k+1}f(x_i)\ge 1 $ or $ \prod_{i=1}^{k}\prod_{j=i+1}^{k+1}W(x_i,x_j)=0. $
Paper~\cite{MR4186624} shows that taking the infimum of $\|f\|_1$ over all $K_{k+1}$-fractional covers of $W$ leads to a parameter which corresponds to the proportional size of the largest tiling by $K_{k+1}$'s in graphs close to $W$ (in particular, Theorem~5.1 in~\cite{MR4186624} is relevant). Hence, our conjecture, whose case $k=1$ corresponds to Theorem~\ref{thm:mainHC}, is as follows.

\begin{conj}
Suppose that $W:\Omega^2\to[0,1]$ is a graphon, and $k\ge 2$ is fixed. 
Then $\G(n,W)$ contains a.a.s.\ the $k$-th power of a Hamilton cycle if and only if the following three conditions are fulfilled:
\begin{enumerate}[label=(\roman*)]
    \item $W$ is a connected graphon,
    \item we have $\lim_{\alpha\searrow 0}\frac{\mu(\SMALLDEG_\alpha(W))}\alpha=0$, and
    \item the only $K_{k+1}$-fractional cover of $W$ of $L^1$-norm at most $\frac1{k+1}$ is the constant-$\frac{1}{k+1}$ function.
\end{enumerate}
\end{conj}

\section*{Acknowledgments}
The sketch of the beautiful peninsula was generated by Gemini.

\bibliographystyle{plain}
\bibliography{references.bib}

\end{document}